\renewcommand{\a }{\alpha }
\renewcommand{\b }{\beta }
\renewcommand{\d}{\delta }
\newcommand{\D }{\Delta }
\newcommand{\e }{\varepsilon }
\renewcommand{\l }{\lambda }
\newcommand{\n }{\nabla }
\newcommand{\var }{\varphi }
\newcommand{\rh }{\rho }
\newcommand{\s }{\sigma }
\renewcommand{\t }{\tau }
\renewcommand{\th }{\theta }
\renewcommand{\O }{\Omega }
\newcommand{\ov}{\overline}
\def\p{\partial}
\newcommand{\wtilde }{\widetilde}
\newcommand{\be}{\begin{equation}}
\newcommand{\ee}{\end{equation}}
\newcommand{\R}{\mathbb{R}}
\renewcommand{\S}{\mathbb{S}}
\newcommand{\N}{\mathbb{N}}
\newcommand{\no}{\noindent}
\newcommand{\dis}{\displaystyle}
\newcommand{\dkr}{{\bf d}}
\newtheorem{theorem}{Theorem}[section]
\newtheorem{proposition}[theorem]{Proposition}
\newtheorem{example}[theorem]{Example}
\newcommand{\bpr}{\begin{proposition}}
\newcommand{\epr}{\end{proposition}}
\newcommand{\bex}{\begin{example}\rm}
\newcommand{\eex}{\end{example}}
\begin{document}

\newtheorem{lem}{Lemma}[section]
\newtheorem{pro}[lem]{Proposition}
\newtheorem{thm}[lem]{Theorem}
\newtheorem{rem}[lem]{Remark}
\newtheorem{cor}[lem]{Corollary}
\newtheorem{df}[lem]{Definition}

\title[Mean field equation with variable intensities]{Blow-up analysis and existence results in the supercritical case for an asymmetric mean field equation with\\ variable intensities}

\author{Aleks Jevnikar}

\address{ Aleks Jevnikar,~University of Rome `Tor Vergata', Via della Ricerca Scientifica 1, 00133 Roma, Italy}
\email{jevnikar@mat.uniroma2.it}


\keywords{Geometric PDEs, Mean field equation, Blow-up analysis, Variational methods}

\subjclass[2000]{ 35J61, 35J20, 35R01, 35B44.}

\begin{abstract}
A class of equations with exponential nonlinearities on a compact Riemannian surface is considered. More precisely, we study an asymmetric sinh-Gordon problem arising as a mean field equation of the equilibrium turbulence of vortices with variable intensities.

We start by performing a blow-up analysis in order to derive some information on the local blow-up masses. As a consequence we get a compactness property in a supercritical range.

We next introduce a variational argument based on improved Moser-Trudinger inequalities which yields existence of solutions for any choice of the underlying surface.
\end{abstract}

\maketitle

\section{Introduction}

\medskip

We consider here the following equation
\begin{equation} \label{eq}
  - \D u = \rho_1 \left( \frac{h_1 e^{u}}{\int_M
      h_1 e^{u} \,dV_g} - \frac{1}{|M|} \right) - a \rho_2 \left( \frac{h_2 e^{-au}}{\int_M
      h_2 e^{-au} \,dV_g} - \frac{1}{|M|} \right),
\end{equation}
where $a\in(0,1)$, $h_1, h_2$ are smooth positive functions, $\rho_1, \rho_2$ are two positive parameters  and $(M,g)$ is a compact orientable surface with no boundary equipped with a Riemannian metric $g$. For the sake of simplicity, we normalize the total volume of $M$ so that $|M|=1$.

Equation \eqref{eq} arises in the context of the statistical mechanics description of $2D$-turbulence: the physical model was first introduced in \cite{ons} and different mean field equations have been obtained according to different constraints. In the case that the circulation number density is subject to a
probability measure, under a \emph{deterministic} assumption on the vortex intensities, the model is ruled by the following equation, see \cite{sa-su}:
\begin{equation} \label{eq:prob}
  - \D u = \rho \int_{[-1,1]} \a \left( \frac{ e^{\a u}}{\int_M
      e^{\a u} \,dV_g} - \frac{1}{|M|} \right) \mathcal P(d\a),
\end{equation}
where $u$ denotes the stream function of a turbulent Euler flow, $\mathcal P$ is a Borel probability measure defined on the interval $[-1,1]$ describing the point vortex intensity distribution and $\rho>0$ is a physical constant related to the inverse temperature. Equation \eqref{eq} is related to the latter model for the particular choice $\mathcal P(d\a)= \t_1 \d_1(d\a)+\t_a \d_{-a}(d\a)$, where $a\in(0,1)$ and $\t_1,\t_a$ are positive parameters such that $\t_1+\t_a=1$. Observe that we focus just one the different-sign problem since the case $supp\, \mathcal P \subset [0,1]$ presents some differences and it is considered in \cite{je-ya2}.

\medskip

In order to describe the nature of equation \eqref{eq} and the strategy to attack it, let us first consider the standard mean field equation obtained from \eqref{eq:prob} with $\mathcal P(d\a) = \d_1$, namely
\begin{equation} \label{eq:liouv}
  - \D u = \rho\left( \frac{h\, e^{u}}{\int_M  h\, e^{u} \,dV_g}- \frac{1}{|M|} \right).
\end{equation}
The latter equation has been widely studied since it is related to the prescribed Gaussian curvature problem \cite{ba, cha, cha2, ly, s} and to the mean field equation of Euler flows \cite{clmp, ki}. For a survey of the latter equation we refer to \cite{mal, tar}.

One of the main difficulties in dealing with this class of equations is due to the loss of compactness, as its solutions might blow-up. As a consequence, the first step is to analyze the bubbling phenomenon. We point out an important property that was observed for \eqref{eq:liouv}, see \cite{bm, li, li-sha}: for a sequence of blow-up solutions $\{u_k\}_k$ to \eqref{eq:liouv} relative to $\rho_k$ with blow-up point $\bar x$ the following quantization holds true 
\begin{equation} \label{liouv:quant}
	\tilde\s(\bar x)= \lim_{\d\to 0} \lim_{k\to+\infty} \rho_k \frac{\int_{B_\d(\bar x)}h\, e^{u_k}}{\int_M  h\, e^{u_k} \,dV_g} = 8\pi.
\end{equation}
The latter property yields important consequences in many applications, in particular for what concerns compactness results, see the discussion later on.

In the more general situation of \eqref{eq} (and \eqref{eq:prob}) the blow-up analysis has still to be completed. We refer to \cite{jwy, os, os1, ors, ri-ze} for the progress in this direction. We stress that for $a=1$ equation \eqref{eq} reduces to the sinh-Gordon problem
\begin{equation} \label{eq:sinh}
  - \D u = \rho_1 \left( \frac{h_1 e^{u}}{\int_M
      h_1 e^{u} \,dV_g} - \frac{1}{|M|} \right) - \rho_2 \left( \frac{h_2 e^{-u}}{\int_M
      h_2 e^{-u} \,dV_g} - \frac{1}{|M|} \right),
\end{equation}
which has been very much studied recently \cite{bjmr, jev, jev2, jev3, jwy2, pi-ri}. For what concerns the quantization property \eqref{liouv:quant} a similar result was derived in \cite{jwy} (see also \cite{je-ya} for a similar approach). Indeed, the authors proved that for a blow-up sequence $\{u_k\}_k$ to \eqref{eq:sinh} one has
\begin{equation} \label{mass:sinh}
	\tilde\s_1(\bar x)=\lim_{\d\to 0} \lim_{k\to+\infty} \rho_{1,k} \frac{\int_{B_\d(\bar x)}h_1 e^{u_k}}{\int_M  h_1 e^{u_k} \,dV_g} \in 8\pi\N, \qquad 	\tilde\s_2(\bar x)=\lim_{\d\to 0} \lim_{k\to+\infty} \rho_{2,k} \frac{\int_{B_\d(\bar x)}h_2 e^{-u_k}}{\int_M  h_2 e^{-u_k} \,dV_g} \in 8\pi\N.
\end{equation}
In fact, one can construct such blowing-up solutions \cite{ew, gp}. Our plan is to apply the same strategy to the general equation \eqref{eq}: in this case the quantization does not have a so simple description due to the asymmetry of the exponential terms in \eqref{eq}. Nevertheless, we are able to derive the following partial result by assuming an a priori bound. Denoting by $\mathring H^1(M) = \left\{ u\in H^1(M) \; : \; \int_M u = 0 \right\}$ we have:
\begin{theorem} \label{th:mass}
Let $\{u_k\}_k\subset \mathring H^1(M)$ be a sequence of blow-up solutions to \eqref{eq} relative to $(\rho_{1,k},\rho_{2,k})\to(\bar {\rho}_1, \bar {\rho}_2)$ with blow-up point $\bar x \in M$ and let $\s_1(\bar x),\, \s_2(\bar x)$ be the local blow-up masses relative to $u,\,-au$, respectively, defined similarly as in \eqref{mass:sinh}. Then, it holds:
\begin{enumerate}
	\item If $\bar \rho_2 < \dfrac{8\pi}{a^2}$ (resp. $\bar \rho_1 < 8\pi$), then $(\s_1(\bar x), \s_2(\bar x))$ is given by
	$$
		(8\pi, 0) \qquad \left(\mbox{resp. } \left(0,\dfrac{8\pi}{a^2}\right)  \right).
	$$
	
	\item If $\bar\rho_1 < 16\pi$, $\bar \rho_2 < \dfrac{16\pi}{a^2}$, then $(\s_1(\bar x), \s_2(\bar x))$ is one of the following types:
	$$\begin{array}{ll}
	 \mbox{if } a\geq \dfrac 12 \,: &	\qquad (8\pi, 0), \qquad \left(0,\dfrac{8\pi}{a^2}\right), \vspace{0.1cm}\\
	 \mbox{if } a< \dfrac 12 \,:&	\qquad (8\pi, 0), \qquad \left(0,\dfrac{8\pi}{a^2}\right), \qquad \left(8\pi,\dfrac{8\pi}{a^2}+\dfrac{16\pi}{a}\right).
	\end{array}$$
\end{enumerate}
\end{theorem}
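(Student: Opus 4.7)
My plan is to combine a local Pohozaev identity with a quantization property of individual bubble masses, and then to enumerate the admissible pairs $(\sigma_1(\bar x),\sigma_2(\bar x))$ using the a priori bounds on $\bar\rho_1,\bar\rho_2$. First I would work in a local chart around $\bar x$, where \eqref{eq} reads $-\Delta u_k = m_{1,k} - a\, m_{2,k} - c_k$ on $B_r(\bar x)$, with $m_{i,k}$ the normalized Liouville densities and $c_k=(\rho_{1,k}-a\rho_{2,k})/|M|$ bounded. I would multiply by $(x-\bar x)\cdot\nabla u_k$ and integrate by parts: the singular asymptotics $u_k(x)\sim -\frac{\sigma_1-a\sigma_2}{2\pi}\log|x-\bar x|$ coming from the Green representation of the limiting measure would give a boundary contribution $-(\sigma_1-a\sigma_2)^2/(4\pi)$, while rewriting the bulk terms $m_{1,k}(x-\bar x)\cdot\nabla u_k$ and $-a\, m_{2,k}(x-\bar x)\cdot\nabla u_k$ as $\nabla e^{u_k}$ and $\nabla e^{-au_k}$ and integrating by parts once more would yield $-2\sigma_1$ and $-2\sigma_2$, after using the concentration of $m_{i,k}$ at $\bar x$. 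Sending $k\to\infty$ and then $r\to 0$ would give the fundamental identity
\begin{equation}\label{pohozaev}
(\sigma_1-a\sigma_2)^2 = 8\pi(\sigma_1+\sigma_2).
\end{equation}

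My second ingredient would be a quantization of simple bubbles. By rescaling $u_k$ around $\bar x$ at the natural blow-up rate and applying the Brezis-Merle alternative together with the Chen-Li-Shafrir classification of entire solutions of $-\Delta U = e^U$ on $\R^2$ with finite mass, I would show that $\sigma_1>0$ implies $\sigma_1\geq 8\pi$, and $\sigma_2>0$ implies $\sigma_2\geq 8\pi/a^2$ (the factor $a^{-2}$ arising from the rescaling $v=-au$ that recasts the second component as a standard Liouville problem). Under the stronger assumption $\bar\rho_1<16\pi$ of case (2), the same analysis would pin down $\sigma_1\in\{0,8\pi\}$.

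With these two ingredients, case (1) is essentially immediate: $\sigma_2\leq\bar\rho_2<8\pi/a^2$ together with the quantization forces $\sigma_2=0$, and \eqref{pohozaev} then gives $\sigma_1=8\pi$; the symmetric variant $\bar\rho_1<8\pi$ is analogous. For case (2) I would substitute each admissible value $\sigma_1\in\{0,8\pi\}$ into \eqref{pohozaev} and solve for $\sigma_2$: for $\sigma_1=0$ one finds $\sigma_2\in\{0,8\pi/a^2\}$, and for $\sigma_1=8\pi$ the equation $\sigma_2(a^2\sigma_2-16\pi a-8\pi)=0$ gives $\sigma_2\in\{0,\,8\pi/a^2+16\pi/a\}$. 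The exotic value $8\pi/a^2+16\pi/a$ fits under the threshold $16\pi/a^2$ precisely when $a<1/2$, accounting for the dichotomy in the statement.

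The hard part will be the quantization step in the asymmetric setting. Unlike the sinh-Gordon case $a=1$ covered in \cite{jwy}, here the two types of bubbles can occur at different scales and interact through the Green's function, so within the mass budget $\bar\rho_1<16\pi$, $\bar\rho_2<16\pi/a^2$ one must show by an iterated rescaling scheme that the only composite profile that can arise is the nested one, contributing exactly the extra $16\pi/a$ predicted by \eqref{pohozaev}. Excluding all other potential composite configurations and verifying this explicit mass contribution is the technical heart of the proof.
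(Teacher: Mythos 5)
Your proposal captures the two essential ingredients of the paper's argument — the local Pohozaev identity $(\sigma_1-a\sigma_2)^2 = 8\pi(\sigma_1+\sigma_2)$ (which matches equation \eqref{poh} of the paper after renormalizing by $2\pi$) and the enumeration of admissible pairs $(\sigma_1,\sigma_2)$ — and the algebraic bookkeeping in your final enumeration is correct, including the threshold $a<1/2$ for the exotic value $8\pi/a^2 + 16\pi/a$. However, the central step of your argument is asserted rather than proved, and the mechanism you suggest for it is not strong enough as stated.

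The claim you rely on is that $\bar\rho_1<16\pi$ forces $\sigma_1\in\{0,8\pi\}$ exactly (and similarly for $\sigma_2$). The Brezis--Merle alternative and the Chen--Li classification of entire Liouville solutions only furnish the \emph{lower} bound $\sigma_1>0\Rightarrow\sigma_1\geq8\pi$ for the contribution of a single bubbling disk. They do not, by themselves, exclude intermediate values of $\sigma_1$ between $8\pi$ and $16\pi$: the total local mass at $\bar x$ is the limit over an entire neighborhood and can in principle pick up contributions both from several bubbling disks and from the neck regions between scales, and these need not be multiples of $8\pi$ a priori. In the same vein, your Pohozaev identity is used as a single constraint evaluated ``at the end,'' but its validity already presupposes that both $u_{1,k}$ and $u_{2,k}$ decay fast on a suitable annulus so that the boundary terms other than the $(\sigma_1-a\sigma_2)^2$ contribution vanish; establishing this decay is itself part of the work.

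What the paper actually does — and what your proposal glosses over — is a dynamical, scale-by-scale tracking of the spherical-average masses $\sigma_i^k(r)$ as $r$ grows from the inner bubble scale $\delta_kR_k$ out to the scale $\tau_k$ of neighboring disks. The key tools are (i) a Harnack-type estimate outside the bubbling disks (Proposition \ref{pr2.2}), which forces the masses to stay essentially constant as long as both components have fast decay, (ii) the ODE-like relations \eqref{deriv} linking $\frac{d}{dr}\bar u_{i,k}$ to the current masses, which detect exactly when a component transitions from fast to slow decay, and (iii) the Pohozaev identity applied repeatedly at \emph{intermediate} radii where both components happen to decay fast — not just once at the end. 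It is this iterated use of Pohozaev that shows the only possible jumps of $(\sigma_1,\sigma_2)$ along the way are precisely the quantized ones, so that $\sigma_1$ can never land at a value strictly between $8\pi$ and $16\pi$. One then still has to combine the contributions of multiple bubbling disks and show that the bounds on $\bar\rho_1,\bar\rho_2$ together with a final Pohozaev application force a single disk (the paper argues that two $(4,0)$-type and $(0,4/a^2)$-type disks would yield a combined $(4,4/a^2)$ which violates Pohozaev). You identify ``excluding all other composite configurations'' as the technical heart, which is exactly right, but the phrase ``iterated rescaling scheme'' does not convey the specific mechanism — Harnack propagation plus Pohozaev at intermediate fast-decay radii — that makes the quantization work. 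As it stands, the justification of $\sigma_1\in\{0,8\pi\}$ is a genuine gap in the proposal.
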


\medskip

We point out that recently in \cite{ri-ze2} the authors exhibit a minimum blow-up mass for the two components as in Theorem \ref{th:mass}: $(8\pi, 0)$ and $\left(0,\dfrac{8\pi}{a^2}\right)$, respectively, and an existence result in the spirit of Theorem~\ref{th:ex1} under some assumptions on the first eigenvalue of $-\D$ is provided.

We follow here the argument in \cite{jwy} concerning the sinh-Gordon case \eqref{eq:sinh} (see also \cite{je-ya, lin-wei-zh} for the Tzitz\'eica equation and $SU(3)$ Toda system, respectively). We start by introducing a selection process to detect a finite number of disks where the local energy is related to that of globally defined Liouville equations.  In each disk the local mass of the two components $u_k$ and $-a u_k$ is quantized according to a local Pohozaev identity. We then use the bound on the parameters $\rho_i$ to exclude some configurations which may produce other contributions to the local masses when combining the blowing-up disks.  

By standard arguments the information on the local mass in Theorem \ref{th:mass} yields some compactness properties, see for example \cite{jwy2}.
\begin{theorem} \label{th:comp}
We have the following:
\begin{enumerate}
	\item Let $K$ be a compactly supported subset either of $\left(\R\setminus\{8\pi\N\}\right)\times \left(0, \dfrac{8\pi}{a^2} \right)\subset \R^2$ or $(0,8\pi)\times \left(\R\setminus\left\{\dfrac{8\pi}{a^2}\N\right\}\right) \subset \R^2$. Then, the family of solutions $\{u_\rho\}_{\rho\in K}\subset \mathring H^1(M)$ of \eqref{eq} relative to $\rho=(\rho_1,\rho_2)$ is uniformly bounded in $C^{2,\a}(M)$ for some $\a>0$.
	
	\medskip
	
	\item Let $K$ be a compactly supported subset of $(8\pi, 16\pi) \times \left(\left(\dfrac{8\pi}{a^2}, \dfrac{16\pi}{a^2} \right)\setminus \left\{ \dfrac{8\pi}{a^2}+\dfrac{16\pi}{a} \right\}\right) \subset \R^2$. Then, the same conclusion as in \emph{(1)} holds true.
\end{enumerate}
\end{theorem}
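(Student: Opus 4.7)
My plan is to argue by contradiction, following the standard concentration-compactness paradigm for this kind of mean field system (see \cite{jwy, jwy2} for the sinh-Gordon case). Suppose the claim fails: then there exist $(\rho_{1,k}, \rho_{2,k}) \in K$ with $(\rho_{1,k}, \rho_{2,k}) \to (\bar\rho_1, \bar\rho_2) \in K$ up to a subsequence, and corresponding solutions $u_k \in \mathring H^1(M)$ of \eqref{eq} whose $C^{2,\alpha}(M)$ norms diverge. Since $\rho_{i,k}$ are bounded and $\int_M u_k\,dV_g = 0$, standard elliptic regularity shows that this can happen only if $\{u_k\}_k$ is a sequence of blow-up solutions in the sense of Theorem \ref{th:mass}; let $S\subset M$ denote its blow-up set.

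Since $(\bar\rho_1,\bar\rho_2)$ satisfies the bounds appearing in Theorem \ref{th:mass}, the latter restricts the admissible local pairs $(\sigma_1(\bar x),\sigma_2(\bar x))$ at each $\bar x \in S$: to $(8\pi,0)$ or $(0,8\pi/a^2)$ in part (1), and to the two or three types listed in Theorem \ref{th:mass}(2) in part (2). I would then invoke the concentration-compactness alternative adapted to the asymmetric system from \cite{jwy, jwy2}: as soon as the $i$-th exponential genuinely concentrates on $S$, the associated normalized measure converges weakly to the atomic measure $\sum_{\bar x\in S}\sigma_i(\bar x)\delta_{\bar x}$, with no absolutely continuous residual. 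Integrating this limit identifies $\bar\rho_i = \sum_{\bar x\in S}\sigma_i(\bar x)$ for every index $i$ whose component concentrates.

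A short case analysis then delivers the contradiction. In part (1), the sum forces either $\bar\rho_1 \in 8\pi\N$ or $\bar\rho_2 \in (8\pi/a^2)\N$, both of which are excluded from $K$. In part (2): if the first component concentrates, then $\bar\rho_1$ is a positive integer multiple of $8\pi$, incompatible with $(8\pi,16\pi)$; if only the second component concentrates then, since the mixed type carries $\sigma_1 > 0$ and thus forces the first component to concentrate as well, only the pure type $(0,8\pi/a^2)$ can appear, giving $\bar\rho_2 \in (8\pi/a^2)\N$, incompatible with $(8\pi/a^2,16\pi/a^2)$; finally, for $a<1/2$ the only remaining scenario is one mixed blow-up, producing $\bar\rho_2 = 8\pi/a^2 + 16\pi/a$, precisely the value removed from $K$. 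I expect the delicate step to be the ``no residual'' statement in the concentration-compactness lemma: following \cite{jwy, jwy2}, it is obtained by revisiting the pointwise blow-up analysis behind Theorem \ref{th:mass} and combining it with a Brezis--Merle/Li--Shafrir type dichotomy applied separately to each of the two exponential measures, using the a priori bounds on $(\bar\rho_1,\bar\rho_2)$ to rule out a diffuse limit away from $S$; once this is in place, the remainder is bookkeeping of the discrete set where $(\bar\rho_1,\bar\rho_2)$ can lie.
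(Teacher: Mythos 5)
The paper does not actually spell out a proof of Theorem \ref{th:comp}: it merely asserts that the compactness follows ``by standard arguments'' from the local mass information in Theorem \ref{th:mass}, pointing the reader to \cite{jwy2}. Your blueprint (contradiction, blow-up set, local mass quantization from Theorem \ref{th:mass}, concentration with vanishing residual, summation to compare $\bar\rho_i$ with the discrete sets, then bookkeeping) is exactly the intended route, and part (1) is handled correctly.

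However, your case analysis in part (2) is internally inconsistent in a way that obscures the actual content of the theorem. You argue that the mixed type $(8\pi, 8\pi/a^2+16\pi/a)$ ``carries $\sigma_1>0$ and thus forces the first component to concentrate as well,'' while your first bullet says that whenever the first component concentrates one gets $\bar\rho_1\in 8\pi\N$. Taken together these two statements already exclude the mixed type entirely (a single mixed point would give $\bar\rho_1=8\pi$, not in $(8\pi,16\pi)$), so the ``remaining scenario'' you invoke at the end, which is supposed to motivate removing the value $\rho_2=8\pi/a^2+16\pi/a$, cannot arise under your own hypotheses. The reason the theorem excludes this value is more delicate: for this asymmetric two-component system it is \emph{not} immediate that both exponential measures lose their diffuse parts simultaneously at a mixed blow-up point, so one may a priori be left with a configuration in which the second component fully concentrates ($\bar\rho_2=8\pi/a^2+16\pi/a$) while $\bar\rho_1$ sits anywhere in $(8\pi,16\pi)$ because of a residual for the first component. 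You should either prove the no-residual statement component by component (in which case say explicitly that the removed value is then an automatic consequence and not a separate case) or phrase the dichotomy so that only the concentrating component is asserted to have vanishing residual (in which case the exclusion of $8\pi/a^2+16\pi/a$ is genuinely needed and your third bullet is the only case it handles). As written, the two readings are mixed and neither is carried through cleanly; this is precisely the ``delicate step'' you flag, and it is where the actual work lies.
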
 

\medskip

Let us now focus on some variational aspects concerning this class of problems. In order to understand how to handle this kind of equations, let us start with the standard mean field equation \eqref{eq:liouv}. In this case the associated energy functional is given by $I_\rho : H^1(M)\to\R$, 
\begin{equation} \label{liouv:func}
	I_\rho(u) = \frac{1}{2}\int_M |\nabla u|^2 \,dV_g - \rho  \left( \log\int_M h\,  e^u \,dV_g  - \int_M u \,dV_g \right).
\end{equation}
The basic tool in this framework is the Moser-Trudinger inequality
\begin{equation}\label{ineq}
	8\pi \log\int_M e^{u-\ov u} \, dV_g \leq \frac 12 \int_M |\n u|^2\,dV_g + C_{M,g}, \qquad \ov u= \fint_M u\,dV_g.
\end{equation}
By the latter inequality we readily deduce that $I_\rho$ is bounded from below and coercive if $\rho<8\pi$ and the global minimum corresponds to a solution of \eqref{eq:liouv}. As soon as $\rho>8\pi$ the functional $I_\rho$ is unbounded from below and the minimization technique is no more possible. A successful strategy is to introduce improved Moser-Trudinger inequalities based on the \emph{spreading} of $e^u$  over the surface \cite{ch-li}. By using this kind of inequalities one can show that if $\rho<8(k+1)\pi, k\in\N$ and $I_\rho(u)$ is large negative, $e^u$ need to concentrate around at most $k$ points of $M$. It is then natural to introduce the set of $k$-th \emph{formal barycentres} of $M$
\begin{equation}	\label{M_k}
	M_k = \left\{ \sum_{i=1}^k t_i \d_{x_i} \, : \, \sum_{i=1}^k t_i=1, \,  x_i\in M \right\}.
\end{equation}
By the above discussion it is possible to prove that the very low sublevels of $I_\rho$ have at least the homology of $M_k$, which is non-trivial. This in turn leads to a solution of \eqref{eq:liouv} for $\rho\notin 8\pi\N$.

\medskip

Let us pass now to the two-parameters case \eqref{eq}. The associated functional is defined by $J_\rho : H^1(M)\to\R$, $\rho=(\rho_1,\rho_2)$
\begin{equation} \label{func}
	J_\rho(u) = \frac{1}{2}\int_M |\nabla u|^2 \,dV_g - \rho_1  \left( \log\int_M h_1  e^u \,dV_g  - \int_M u \,dV_g \right) -  \rho_2 \left( \log\int_M h_2  e^{-a u} \,dV_g  + \int_M a u \,dV_g \right).
\end{equation}
In this framework there is a generalized Moser-Trudinger inequality obtained in \cite{ors} which can be rephrased as
\begin{equation} \label{m-t}
8\pi \log\int_M e^{u-\ov u} \, dV_g + \frac{8\pi}{a^2} \log\int_M e^{-a(u-\ov u)} \, dV_g \leq \frac 12 \int_M |\n u|^2\,dV_g + C_{M,g},\qquad \ov u= \fint_M u\,dV_g.
\end{equation}
We point out that we can interpret the latter sharp inequality by means of the minimum local blow-up mass obtained in the Theorem \ref{th:mass}. Concerning the existence issue to the general problem \eqref{eq} there are still a lot of gaps. If we restrict our attention to the symmetric case, namely the sinh-Gordon equation \eqref{eq:sinh}, there are some successful strategies that one could try to pursue also for the general equation. To this end, let us briefly illustrate them.

\medskip

In case one of the two parameters $\rho_i$ is small then we can rely on the analysis developed for the standard mean field equation \eqref{eq:liouv} (see the argument above) and get a solution to \eqref{eq:sinh} \cite{zhou}. When both parameters are large the situation is much more subtler due to the interaction of the two components $u$ and $-u$. In this direction an existence result is derived in \cite{jev} via a detailed description of the sublevels of the associated energy functional. Finally, a general existence result under the assumption the surface has positive genus is given in \cite{bjmr}, while the sphere case is still an open problem. By similar arguments as before, one can use improved Moser-Trudinger inequalities to show that if $\rho_1<8k\pi$, $\rho_2<8l\pi$, $k,l\in\N$, in the very low sublevels of the energy functional either $e^u$ is close to $M_k$ or $e^{-u}$ is close to $M_l$ in the distributional sense, recall \eqref{M_k}. This alternative can be expressed by using the \emph{topological join} of $M_k$ and $M_l$. The topological join of two topological sets is given by
\begin{equation}\label{join}
 A*B = \Bigr\{ (a,b,s): \; a \in A,\; b \in B,\; s \in [0,1]  \Bigr\} \Bigr/_{\hspace{-0.1cm}\mbox{\large {\emph{E}}}}\;,
\end{equation}
where $E$ is an equivalence relation such that:
$$
(a_1, b,1) \stackrel{E}{\sim} (a_2,b, 1)  \quad \forall a_1, a_2
\in A, b \in B
  \qquad \quad \hbox{and} \qquad \quad
(a, b_1,0)  \stackrel{E}{\sim} (a, b_2,0) \quad \forall a \in A,
b_1, b_2 \in B.
$$
Hence, the low sublevels of the functional are mapped into $M_k * M_l$: the join parameter $s$ somehow measures whether $e^{u}$ is closer to $M_k$ or $e^{-u}$ is closer to $M_l$. The assumption on $M$ to have positive genus is then used in a crucial way to construct two disjoint simple non-contractible curves  $\gamma_1, \gamma_2$ such that $M$ retracts on each of them through continuous maps $R_1, R_2$, respectively. By means of these retractions one can restrict the target from $M_k * M_l$ to $(\gamma_1)_k *(\gamma_2)_l$ only. The non-trivial homology of $(\gamma_1)_k *(\gamma_2)_l$ is then used to produce a solution to \eqref{eq:sinh}.

Finally, we point out that the general case \eqref{eq} with $a\in(-1,1)$ was treated in \cite{rtzz}: for some suitably small parameters they are able to derive existence of solutions to \eqref{eq} in a slightly supercritical regime. 

\medskip

The aim of the paper is to give general existence results in a supercritical case: more precisely, we will show existence of solutions to \eqref{eq} in the two supercritical regimes highlighted in Theorem \ref{th:comp}. The argument is based on two types of improved Moser-Trudinger inequalities and it works for any choice of the underlying surface (with the exception of the last result). The first result is the following one.
\begin{theorem} \label{th:ex1}
Suppose either $\rho_1<8\pi, \,\rho_2 \in \left(\dfrac{8\pi}{a^2}k, \dfrac{8\pi}{a^2}(k+1) \right)$ or $\rho_1 \in (8\pi k,8\pi(k+1)), \,\rho_2<\dfrac{8\pi}{a^2}$, for some $k\in\N$, where $a\in(0,1)$. Then, there exists a solution to \eqref{eq} for any underlying surface $M$.
\end{theorem}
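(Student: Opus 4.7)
We focus on the first regime, $\rho_1<8\pi$ and $\rho_2\in\bigl(\tfrac{8\pi k}{a^2},\tfrac{8\pi(k+1)}{a^2}\bigr)$; the other regime is completely symmetric, after the substitution $v=-au$ which reduces it to the same analysis with the roles of the two exponential terms swapped. The overall strategy is a Djadli--Malchiodi variational scheme adapted to the asymmetric equation: we exploit the subcriticality of $\rho_1$ to absorb the $\rho_1$-term of the functional $J_\rho$ of \eqref{func} into the Dirichlet energy via \eqref{ineq}, and then apply a min--max argument based on the nontrivial topology of the set $M_k$ of formal barycenters defined in \eqref{M_k}.

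The analytical core is an improved Moser--Trudinger inequality of Chen--Li type adapted to the second component: if for some $\eta>0$ there exist $k+1$ mutually disjoint open sets $\Omega_0,\dots,\Omega_k\subset M$ such that $\int_{\Omega_i} h_2 e^{-au}\,dV_g\geq \eta \int_M h_2 e^{-au}\,dV_g$ for every $i=0,\dots,k$, then
$$
\frac{8\pi(k+1)}{a^2}\log\int_M h_2 e^{-a(u-\bar u)}\,dV_g \;\leq\; \frac12\int_M|\nabla u|^2\,dV_g + C(\eta,k).
$$
Combined with \eqref{ineq} applied to the first exponential term, and with the strict inequalities $\rho_1<8\pi$, $\rho_2<\tfrac{8\pi(k+1)}{a^2}$, this yields a uniform lower bound on $J_\rho$ whenever the normalized measure $\tfrac{h_2 e^{-au}}{\int h_2 e^{-au}}$ has the above spreading property. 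Contrapositively, along any sequence $\{u_n\}$ with $J_\rho(u_n)\to-\infty$, this measure must concentrate essentially on at most $k$ points of $M$, producing a continuous retraction $\Psi:\{J_\rho\leq -L\}\to M_k$ for $L$ large, by projecting onto barycenters in a distributional Kantorovich-type metric.

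In the opposite direction, for $\sigma=\sum_{i=1}^k t_i\delta_{x_i}\in M_k$ we set
$$
\Phi_\lambda(\sigma)(y) \;=\; -\frac{1}{a}\log\sum_{i=1}^k t_i\!\left(\frac{\lambda}{1+\lambda^2 d_g(y,x_i)^2}\right)^{\!2}.
$$
A direct computation gives $\tfrac12\|\nabla\Phi_\lambda(\sigma)\|_{L^2}^2\sim \tfrac{16\pi k}{a^2}\log\lambda$ and $\int h_2 e^{-a\Phi_\lambda(\sigma)}\,dV_g = O(1)$, while the mean $\overline{\Phi_\lambda(\sigma)}\sim\tfrac{2}{a}\log\lambda$ forces the contributions $-\rho_1\log\int h_1 e^{\Phi_\lambda(\sigma)}$ and $\rho_1\overline{\Phi_\lambda(\sigma)}$ to cancel at leading order. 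Altogether,
$$
J_\rho(\Phi_\lambda(\sigma)) \;\leq\; \left(\frac{16\pi k}{a^2}-2\rho_2\right)\log\lambda + O(1) \qquad \text{uniformly in } \sigma\in M_k,
$$
which tends to $-\infty$ thanks to $\rho_2>\tfrac{8\pi k}{a^2}$. A standard verification shows that $\Psi\circ\Phi_\lambda$ is homotopic to the identity on $M_k$, so $\Phi_\lambda$ realizes a nontrivial homology class inside the sublevel $\{J_\rho\leq -L\}$.

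Since $M_k$ is not contractible (for any choice of the underlying surface $M$), this topological obstruction produces, via a saddle-point argument, a Palais--Smale sequence at a suitable min--max level. The lack of an a priori compactness at an individual $\rho_2$ is handled by the Struwe monotonicity trick, exploiting the linear dependence of $J_\rho$ on $\rho_2$, which delivers bounded Palais--Smale sequences and hence solutions for almost every $\rho_2$ in the interval. Theorem~\ref{th:comp}(1), whose hypotheses precisely cover the parameter range under consideration, then extends existence to every $\rho_2\in\bigl(\tfrac{8\pi k}{a^2},\tfrac{8\pi(k+1)}{a^2}\bigr)$ by a limit procedure. The main technical obstacle lies in establishing the improved Moser--Trudinger inequality in the asymmetric setting and in verifying that the $\rho_1$-contribution to $J_\rho(\Phi_\lambda(\sigma))$ is genuinely $O(1)$ uniformly on $M_k$, so that it does not spoil the leading-order gain from the $\rho_2$-term; once these points are settled, the rest is a routine adaptation of the Djadli--Malchiodi scheme.
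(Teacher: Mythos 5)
Your overall architecture — improved Moser--Trudinger inequality, concentration onto the barycenter set $M_k$, test functions $\Phi_\lambda$, homotopy $\Psi\circ\Phi_\lambda\cong \mathrm{Id}_{M_k}$ — coincides with the paper's, and your test-function estimates reproduce \eqref{grad}, \eqref{exp1}, \eqref{aver}. However, the analytic core as you have formulated it contains a genuine gap. You state the improvement as a one-component Chen--Li inequality, namely
$$\frac{8\pi(k+1)}{a^2}\log\int_M h_2\, e^{-a(u-\bar u)}\,dV_g \;\leq\; \frac12\int_M|\nabla u|^2\,dV_g + C,$$
and propose to ``combine'' it with the ordinary Moser--Trudinger inequality \eqref{ineq} for the first exponential. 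But both of these separately spend the full $\frac12\int_M|\nabla u|^2$, so adding them gives
$$\rho_1\log\int_M e^{u-\bar u}\,dV_g+\rho_2\log\int_M e^{-a(u-\bar u)}\,dV_g \leq \left(\frac{\rho_1}{16\pi}+\frac{\rho_2 a^2}{16\pi(k+1)}\right)\int_M|\nabla u|^2\,dV_g + C,$$
and in the parameter regime of the theorem (take $\rho_1\to 8\pi^-$ and $\rho_2\to \tfrac{8\pi(k+1)}{a^2}{}^-$) the coefficient approaches $1$, not $\frac12$. This does not yield coercivity of $J_\rho$ under spreading, hence does not force concentration of $\tfrac{h_2 e^{-au}}{\int h_2 e^{-au}}$ on low sublevels, and the map $\Psi$ is not produced. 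The correct tool is a \emph{joint} improvement in which both logarithmic terms are controlled by a single $\frac{1+\e}{2}\int_M|\nabla u|^2$: this is exactly Proposition~\ref{mt-impr}, whose local version (Proposition~\ref{mt-local}) is built from the Ohtsuka--Ricciardi--Suzuki inequality \eqref{m-t}, not from \eqref{ineq} plus a one-sided improvement. The fact that \eqref{m-t} bounds the sum of \emph{both} terms with $\frac12\int|\nabla u|^2$ — rather than each term separately — is precisely what makes the argument close.

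Your concluding step is also different from the paper's, though this part would be valid. You invoke Struwe's monotonicity trick to obtain solutions for almost every $\rho_2$, and then pass to the limit via Theorem~\ref{th:comp}(1). The paper instead bypasses Palais--Smale entirely: using the compactness of Theorem~\ref{th:comp}(1), Proposition~\ref{high} shows the high sublevel $J_\rho^L$ is contractible, and Proposition~\ref{top-arg} (Lucia's deformation lemma) shows that if there were no critical points then $J_\rho^{-L}$ would be a deformation retract of $J_\rho^{L}$ — contradicting the nontrivial homology of $J_\rho^{-L}$ inherited from $M_k$. Both routes are standard, but once you already have the blow-up compactness theorem, the deformation-lemma route is more direct and avoids the almost-everywhere-and-limit detour.
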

The latter result follows mainly by the analysis developed for the one-parameter case \eqref{eq:liouv}, see for example \cite{zhou}, and it is based on a \emph{macroscopic} improved Moser-Trudinger inequality. 

\medskip

On the other hand, the second existence result concerns a doubly supercritical case, namely when both $\rho_1>8\pi$, $\rho_2 > \dfrac{8\pi}{a^2}$ and therefore it is more delicate to handle due to the non-trivial interaction between the two components $u$ and $-a u$. We have the following result.
\begin{theorem} \label{th:ex2}
Suppose $\rho_1 \in (8\pi, 16\pi)$ and either $\rho_2 \in \left(\dfrac{8\pi}{a^2}, \dfrac{16\pi}{a^2} \right)$ if $a\geq \dfrac 12$ or $\rho_2 \in \left(\dfrac{8\pi}{a^2}, \dfrac{8\pi}{a^2}+\dfrac{16\pi}{a} \right)$ if $a< \dfrac 12$. Then, there exists a solution to \eqref{eq} for any underlying surface $M$.
\end{theorem}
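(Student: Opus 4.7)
The plan is to set up a variational min-max scheme modeled on the topological join $M * M$, combined with an improved Moser-Trudinger inequality in the spirit of \cite{ors} and Struwe's monotonicity trick in order to exploit the compactness provided by Theorem~\ref{th:comp}(2). Starting from~\eqref{m-t}, I would first derive a \emph{localized} two-sided improvement of the Moser-Trudinger inequality: if the probability measures $\mu_1 = h_1 e^u / \int_M h_1 e^u \, dV_g$ and $\mu_2 = h_2 e^{-au}/\int_M h_2 e^{-au} \, dV_g$ are both $\varepsilon$-spread over (at least) two disjoint regions of $M$, then the constants $8\pi$ and $8\pi/a^2$ in~\eqref{m-t} can be simultaneously replaced by $16\pi$ and $16\pi/a^2$. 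Since $\rho_1 < 16\pi$ and $\rho_2 < 16\pi/a^2$ by assumption, such a bound would force $J_\rho$ to be bounded from below, contradicting $J_\rho(u) \leq -L$ for $L$ large. Hence on deep sublevels at least one of $\mu_1, \mu_2$ must be close to a single Dirac mass, and encoding which of the two dominates by a parameter $s \in [0,1]$ yields a continuous projection
$$
\Psi : \{ J_\rho \leq -L \} \longrightarrow M * M.
$$

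Next, I would construct a test family parameterized by $M * M$: for $(x_1, x_2, s) \in M * M$ and a scale $\lambda \gg 1$, set
$$
\varphi_{\lambda,(x_1,x_2,s)}(y) = -2s \log\bigl(\lambda^{-2} + d(y,x_1)^2 \bigr) + \frac{2(1-s)}{a} \log\bigl( \lambda^{-2} + d(y,x_2)^2 \bigr),
$$
suitably normalized to have zero average. A direct computation using $\rho_1 > 8\pi$ and $\rho_2 > 8\pi/a^2$ gives $\sup_{\sigma \in M * M} J_\rho(\varphi_{\lambda,\sigma}) \to -\infty$ as $\lambda \to +\infty$, while $\Psi \circ \varphi_{\lambda,\cdot}$ is homotopic to the identity on $M * M$. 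For any closed surface $M$ the five-dimensional CW-complex $M * M$ has non-trivial top homology, so the inclusion of the low sublevels in $H^1(M)$ cannot be contractible, and the min-max value
$$
\alpha_\rho = \inf_{h \in \mathcal{H}} \sup_{\sigma \in M * M} J_\rho(h(\sigma)),
$$
taken over the class $\mathcal{H}$ of continuous maps $h : M * M \to H^1(M)$ homotopic to $\varphi_{\lambda,\cdot}$, is finite.

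To convert $\alpha_\rho$ into a genuine critical point I would apply Struwe's monotonicity argument to a one-parameter family $J_{\rho(t)}$, with $\rho(t) = t\rho$ for $t$ close to $1$: the map $t \mapsto \alpha_{\rho(t)}/t$ is monotone, hence differentiable a.e., and at each differentiability point one obtains a bounded Palais-Smale sequence producing a solution $u_t$ of~\eqref{eq} at the perturbed parameter $\rho(t)$. Letting $t_n \nearrow 1$, the sequence $\{u_{t_n}\}$ falls in the regime covered by Theorem~\ref{th:comp}(2), which yields uniform $C^{2,\alpha}$-bounds and passage to the limit gives a solution of~\eqref{eq}. This is precisely where the case distinction in the statement enters: only for $a < 1/2$ does the extra configuration $(8\pi, 8\pi/a^2 + 16\pi/a)$ appear in Theorem~\ref{th:mass}(2), and the restriction $\rho_2 < 8\pi/a^2 + 16\pi/a$ is tailored to avoid it so that Theorem~\ref{th:comp}(2) remains applicable along the whole path.

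The main obstacle will be the proof of the two-sided, \emph{localized} improved Moser-Trudinger inequality. The asymmetry between $u$ and $-au$ destroys the symmetrization available for~\eqref{eq:sinh}, so one has to refine the spreading analysis of~\cite{ors} to produce genuinely independent improvements $8\pi \to 16\pi$ and $8\pi/a^2 \to 16\pi/a^2$ rather than just an additive improvement of their sum. A secondary technical issue is the behaviour of $\Psi \circ \varphi_{\lambda,\cdot}$ near the singular strata $s=0$ and $s=1$ of $M * M$, where one of the two bubbles nearly disappears and the concentration profile of the remaining component has to be compared carefully with the Dirac mass selected by $\Psi$.
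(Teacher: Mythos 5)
Your reduction of the low sublevels to a map $\Psi: J_\rho^{-L}\to M*M$ is essentially Proposition \ref{mt-impr} and Proposition \ref{p:altern} and is fine, but the argument breaks at the test-function step, and the failure is exactly where the paper's key new ingredient enters. For your amplitude-interpolated family, when $d(x_1,x_2)\geq\delta$ one has $\log\int_M e^{\varphi_{\lambda,\sigma}}\,dV_g=(4s-2)^+\log\lambda+o(\log\lambda)$, $\log\int_M e^{-a\varphi_{\lambda,\sigma}}\,dV_g=(2-4s)^+\log\lambda+o(\log\lambda)$, and both averages are $O(1)$, so that
\begin{equation*}
J_\rho(\varphi_{\lambda,(x_1,x_2,s)})=\left[16\pi s^2+\frac{16\pi(1-s)^2}{a^2}-\rho_1\,(4s-2)^+-\rho_2\,(2-4s)^+\right]\log\lambda+o(\log\lambda).
\end{equation*}
At $s=\tfrac12$ the bracket equals $4\pi+\dfrac{4\pi}{a^2}>0$, hence $J_\rho(\varphi_{\lambda,\sigma})\to+\infty$ there: the claimed uniform divergence to $-\infty$ over $M*M$ is false (and $\Psi\circ\varphi_{\lambda,\cdot}$ is not even defined for intermediate $s$). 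This is not a fixable technicality. In the range $\rho_1<16\pi$ and $\rho_2<\dfrac{16\pi}{a^2}$ (resp. $\rho_2<\dfrac{8\pi}{a^2}+\dfrac{16\pi}{a}$ for $a<\tfrac12$), the paper's \emph{scale-invariant} improved Moser--Trudinger inequality (Proposition \ref{mt-improved}, built on the center-and-scale map of Proposition \ref{conc}, a dilation argument in balls and a Kelvin transform in annuli) bounds $J_\rho$ from below on functions whose two densities concentrate at the same point with the same scale. Consequently the refined projection \eqref{Psi} sends $J_\rho^{-L}$ into $(M*M)\setminus S$ with $S$ as in \eqref{S} (Proposition \ref{proj}), and no map $\Phi:M*M\to J_\rho^{-L}$ with $\wtilde\Psi\circ\Phi$ homotopic to the identity can exist, since the identity of $M*M\cong\S^5$ (for $M\cong\S^2$) cannot factor through the open set $(M*M)\setminus S$. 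The correct construction interpolates in the \emph{scale}, $\l_{1,s}=(1-s)\l$, $\l_{2,s}=s\l$ as in \eqref{test-f}, over the retract $X_\l$ of \eqref{x}, and uses the nontrivial homology of $(M*M)\setminus S$ (via Alexander duality when $M\cong\S^2$) in place of that of $M*M$. Your $\Psi$, built only from the $\dkr$-distances, cannot detect the excluded stratum $S$, so both the projection and the test family have to be redone with the scale information; your "two-sided spreading" improvement alone cannot yield this.

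Two further points. First, you attribute the dichotomy $a\geq\tfrac12$ versus $a<\tfrac12$ to the applicability of Theorem \ref{th:comp}(2), but compactness holds on both sides of $\dfrac{8\pi}{a^2}+\dfrac{16\pi}{a}$ (this is precisely what Theorem \ref{th:ex3} exploits for larger $\rho_2$ on positive-genus surfaces); the threshold actually originates in the improved inequality: in the annulus estimate of Lemma \ref{mt-annulus} the Kelvin-transform exponent must lie in $\left[-\tfrac4a,4\right]$, and for $a<\tfrac12$ the admissible choice $\b=-\tfrac4a$ only produces the constant $\dfrac{8\pi}{a^2}+\dfrac{16\pi}{a}$ in \eqref{imp-ineq2} rather than $\dfrac{16\pi}{a^2}$. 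Second, your min-max scheme with Struwe's monotonicity trick is a legitimate alternative to the paper's route via Lucia's deformation lemma (Propositions \ref{high} and \ref{top-arg}) once the topology of the low sublevels is correctly identified, but it sits downstream of the topological setup and cannot repair the gap above.
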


\begin{rem}
We need to take into account the different ranges $a\geq \dfrac 12$ and $a<\dfrac 12$ because of the the different blow-up local masses in the point (2) of Theorem \ref{th:mass}, see Section \ref{sec:exist2} for more details. 
\end{rem}

The argument is based on the description of the low sublevels of the functional $J_\rho$: the aim is to detect a change of topology between two sublevels. To this end we will see that one has to take into account not only
the location of the concentration points, but also the \emph{scale} of concentration, in the spirit of \cite{jev} (first used in \cite{mal-ru} for the Toda system, see also \cite{jkm}). Indeed, we will show that if $J_\rho(u)$ is large negative, then $e^u$ and $e^{-au}$ are either concentrated at different points or they are concentrated at the same point but with different scales of concentration. The argument is based on a new improved Moser-Trudinger inequality which, differently from before, it is scale-invariant. This gives some constraints on the maps from low-energy levels  into the topological join of the barycentric sets. We anticipate that we will have to consider the set (observing $M_1\cong M$)
$$
	M*M \setminus \left\{ \left(x,x,s=\frac 12\right)\,:\, x\in M \right\},
$$
where the set we are excluding is made of configurations with the same point and the same scale of concentration. The join parameter $s=\frac 12$ roughly expresses the two components are concentrating with the same scale.

\medskip

The only case left is $\rho_1 \in (8\pi, 16\pi)$ and $\rho_2 \in \left(\dfrac{8\pi}{a^2}+\dfrac{16\pi}{a}, \dfrac{16\pi}{a^2} \right)$ for $a< \dfrac 12$; since we do not expect an improved inequality as in the above argument to hold, we restrict our attention to positive genus surfaces and apply the strategy in \cite{bjmr}, see the idea below \eqref{join}.
\begin{theorem} \label{th:ex3}
Suppose $\rho_1 \in (8\pi, 16\pi)$ and $\rho_2 \in \left(\dfrac{8\pi}{a^2}+\dfrac{16\pi}{a}, \dfrac{16\pi}{a^2} \right)$ for $a< \dfrac 12$ and suppose $M$ has positive genus $g(M)>0$. Then, there exists a solution to \eqref{eq}.
\end{theorem}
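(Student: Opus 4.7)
The plan is to adapt the variational scheme of \cite{bjmr}, originally devised for the positive-genus sinh-Gordon case, to the asymmetric setting. Note first that the parameter range $\rho_1\in(8\pi,16\pi)$, $\rho_2\in\left(\frac{8\pi}{a^2}+\frac{16\pi}{a},\frac{16\pi}{a^2}\right)$ sits inside the compactness region of Theorem \ref{th:comp}(2): it is precisely the slab lying past the excluded threshold $\frac{8\pi}{a^2}+\frac{16\pi}{a}$. It therefore suffices to produce, by min-max, a non-trivial critical value of $J_\rho$; Struwe's monotonicity trick together with the compactness of Theorem \ref{th:comp}(2) will then convert it into a genuine solution.

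The analytical backbone is an improved version of \eqref{m-t}: since $\rho_1<16\pi$ and $\rho_2<16\pi/a^2$, I expect that, whenever the two normalized measures $\frac{h_1 e^u}{\int_M h_1 e^u\,dV_g}$ and $\frac{h_2 e^{-au}}{\int_M h_2 e^{-au}\,dV_g}$ are each spread over two disjoint regions of $M$, a doubled inequality
\[
16\pi \log\int_M e^{u-\ov{u}}\,dV_g + \frac{16\pi}{a^2}\log\int_M e^{-a(u-\ov{u})}\,dV_g \leq \tfrac{1}{2}\int_M |\n u|^2\,dV_g + C
\]
holds (compare \cite{ors} and the Chen-Li type localization used in \cite{bjmr}). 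The contrapositive produces the dichotomy driving the whole scheme: in any sublevel $\{J_\rho\leq -L\}$ with $L$ large enough, at least one of the two normalized measures must be concentrated near a single point of $M$.

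This dichotomy is encoded by a continuous map from $\{J_\rho\leq -L\}$ into $M*M$, built from the centers of mass of the two measures together with an energy-fraction parameter $s\in[0,1]$ telling which component is more concentrated. The assumption $g(M)>0$ enters exactly as in \cite{bjmr}: one picks two disjoint simple non-contractible curves $\gamma_1,\gamma_2\subset M$ together with Lipschitz retractions $R_i:M\to\gamma_i$, and composes in order to land inside $\gamma_1*\gamma_2\cong S^1*S^1\cong S^3$, which carries a non-trivial top-dimensional homology class. To match this map I construct a test family $\Phi:\gamma_1*\gamma_2\to H^1(M)$ realizing the class in arbitrarily low sublevels: gluing, via the join parameter $s$, a standard Liouville bubble of scale $\lambda_1$ centered at $y_1\in\gamma_1$ (producing the concentration of $e^u$) with an opposite-sign bubble of scale $\lambda_2$ centered at $y_2\in\gamma_2$ (producing the concentration of $e^{-au}$), in the spirit of the constructions in \cite{bjmr,jev}. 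Direct integral estimates yield $J_\rho(\Phi(\cdot))\to -\infty$ as $\lambda_1,\lambda_2\to+\infty$, while the disjointness of $\gamma_1,\gamma_2$ and the properties of the retractions ensure that the projection composed with $\Phi$ is homotopic to the identity on $\gamma_1*\gamma_2$; consequently the projection is not null-homotopic.

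The resulting non-contractibility furnishes a min-max level $c_\rho$ for $J_\rho$. Applying Struwe's monotonicity argument to small perturbations $(\rho_1',\rho_2')$ of $(\rho_1,\rho_2)$ provides a bounded Palais-Smale sequence at level $c_{\rho'}$ for almost every $\rho'$ nearby; standard concentration-compactness, combined with the a priori bounds of Theorem \ref{th:comp}(2), produces a solution at each such $\rho'$, and a diagonal argument together with one further application of compactness gives the desired solution at the prescribed $\rho$. The main obstacle I anticipate is the derivation of the sharp localized improved Moser-Trudinger inequality in the asymmetric case: unlike \eqref{eq:sinh}, the two exponential terms come with different weights, so one must carefully control how the two components interact, in particular ruling out the borderline joint blow-up configuration $\bigl(8\pi,\tfrac{8\pi}{a^2}+\tfrac{16\pi}{a}\bigr)$ appearing in Theorem \ref{th:mass}(2), in order to reach simultaneously the optimal constants $(16\pi,16\pi/a^2)$ under the two-region spreading hypothesis.
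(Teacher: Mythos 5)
Your proposal follows the paper's architecture very closely: a macroscopic improved Moser--Trudinger inequality based on spreading (Proposition~\ref{mt-impr}) gives the dichotomy that in low sublevels at least one of the two normalized densities concentrates; this is encoded by a map into $M*M$; the positive-genus assumption (Lemma~\ref{new}) supplies two disjoint simple closed curves $\gamma_1,\gamma_2$ with retractions $R_i$, allowing the target to be shrunk to $\gamma_1*\gamma_2\cong\S^3$; and the test family $\wtilde\Phi_R$, built from two bubbles centered at $p\in\gamma_1$, $q\in\gamma_2$ (which forces $d(p,q)\geq\bar\d>0$, so the energy estimates are as in Proposition~\ref{p:test2}), realizes this sphere in arbitrarily low sublevels, giving $\wtilde\Psi_R\circ\wtilde\Phi_R\simeq \mathrm{Id}$ and hence non-trivial homology of $J_\rho^{-L}$. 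The one genuine divergence is the concluding step: where you propose Struwe's monotonicity trick plus a diagonal argument, the paper instead invokes the Lucia-style deformation lemma (Propositions~\ref{top-arg} and \ref{high}), arguing directly that if there were no critical points then $J_\rho^{-L}$ would be a deformation retract of the contractible high sublevel. Both conclusions are standard and legitimate here; the deformation-lemma route is slightly more economical because the a priori compactness of Theorem~\ref{th:comp}(2) has already been established, so there is no need to perturb the parameters at all.

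One misconception worth correcting is your closing concern about ``ruling out the borderline joint blow-up configuration $\bigl(8\pi,\tfrac{8\pi}{a^2}+\tfrac{16\pi}{a}\bigr)$.'' That configuration plays no role in the spreading-based improvement you actually need. Proposition~\ref{mt-impr} delivers the doubled constants $(16\pi,16\pi/a^2)$ whenever each of $e^u$ and $e^{-au}$ spreads over at least two $\delta$-separated regions, and its proof is a localization of the sharp two-term inequality of \cite{ors} à la Chen--Li; the asymmetry of the exponents causes no extra interaction here. The mass $\tfrac{8\pi}{a^2}+\tfrac{16\pi}{a}$ only enters the \emph{scale-dependent} improvement (Proposition~\ref{mt-improved}) used in Theorem~\ref{th:ex2}, and indeed the failure of that scale-based inequality in the present parameter window is precisely why Theorem~\ref{th:ex3} falls back on the positive-genus argument. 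So the obstacle you anticipate does not arise, and the rest of your scheme goes through.
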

The organization of this paper is as follows. In Section \ref{sec:blow-up} we analyze the blow-up limits and prove Theorem \ref{th:mass}. In Section \ref{sec:exist1} we show the strategy to get the existence results of the Theorem \ref{th:ex1} and Section \ref{sec:exist2} we introduce the argument which yields to the proof of the Theorem \ref{th:ex2}. In Section \ref{sec:genus} we prove the Theorem \ref{th:ex3}.

\

\

\begin{center}
\textbf{Notation}
\end{center}

\medskip

The symbol $B_r(p)$ will denote the open metric ball of
radius $r$ and center $p$. We will simply write $B_r\subset\R^2$ for balls which are centered at $0$, while $A_p(r_1, r_2)$ is the open
annulus of radii $r_1, r_2$ and center $p$. 

The average of $u\in H^1(M)$ is denoted by $\ov u = \fint_M u \,dV_g.$ For the sublevels of the functional $J_\rho$ we will write
\begin{equation} \label{sub}
	J_\rho^L = \bigr\{ u\in H^1(M) \,:\, J_\rho(u) \leq L  \bigr\}.
\end{equation}
Let $\mathcal M(M)$ be the  set of all Radon measures on $M$: we will consider the Kantorovich-Rubinstein distance 
\begin{equation} \label{dist}
	\dkr (\mu_1,\mu_2) = \sup_{\|f \|_{Lip}\leq 1} \left| \int_M f \, d\mu_1 - \int_M f\,d\mu_2 \right|, \qquad \mu_1,\mu_2 \in \mathcal M(M).
\end{equation}

Throughout the paper the letter $C$ will stand for positive constants which
are allowed to vary among different formulas and even within the same lines.
To stress the dependence of the constants on some
parameter  we add subscripts to $C$, for example $C_\d$. We will write $o_{\alpha}(1)$ to denote
quantities that tend to $0$ as $\alpha \to 0$ or $\alpha \to
+\infty$; the symbol
$O_\alpha(1)$ will be used for bounded quantities.

\

\section{Blow-up limits} \label{sec:blow-up}

We are concerned here with the study of blow-up limits to \eqref{eq} and with the proof of Theorem \ref{th:mass}. We will actually consider the following localized problem:
\begin{equation}
\label{seq}
	-\D u_k =  \rho_{1,k} h_1e^{u_{1,k}} - a  \rho_{2,k} h_2 e^{u_{2,k}} \qquad \mbox{in } B_1,
\end{equation}
with $(\rho_{1,k},\rho_{2,k})\to(\bar \rho_1,\bar \rho_2)$, where 
\begin{align} \label{norm}
u_{1,k} = u_k - \log \int_M h_{1} \,e^{u_{k}} \,dV_g, \qquad u_{2,k} = -a u_k - \log \int_M h_{2} \,e^{-a u_{k}} \,dV_g,
\end{align}
are such that $\int_M u_k \,dV_g = 0$ and $0$ is the only blow-up point in $B_1$, i.e.:
\begin{equation}
\label{a1}
\max_{K\subset\subset B_1\setminus\{0\}} u_{i,k}\leq C_K,\quad \max_{x\in B_1,\, i=1,2}\{u_{i,k}(x)\}\rightarrow\infty.
\end{equation}
To set the problem, we suppose that
\begin{equation}
\label{a2}
h_1(0)=h_2(0)=1, ~ \frac1C\leq h_i(x)\leq C, ~ \|h_i(x)\|_{C^3(B_1)}\leq C, \qquad \forall x\in B_1, ~i=1,2,
\end{equation}
for some constant $C>0$. Moreover, it is natural to assume bounded boundary oscillations
\begin{align}
\label{a3}
\begin{split}
|u_{i,k}(x)-u_{i,k}(y)|\leq C,\qquad \forall~x,y\in\partial B_1,
\end{split}
\end{align}
where $C$ is independent of $k.$ By the normalization in \eqref{norm} we may assume
\begin{equation} \label{bound}
\lim_{k\to+\infty}\frac{1}{2\pi}\int_{B_1} \rho_{i,k}h_i e^{u_{i,k}}\leq \frac{\bar\rho_i}{2\pi}.
\end{equation}	
Letting
\begin{align}
\label{localmass}
\sigma_i=\lim_{\delta\rightarrow0}\lim_{k\rightarrow\infty}\frac{1}{2\pi}\int_{B_{\delta}}\rho_{i,k} h_i e^{u_{i,k}},
\end{align}
the Theorem \ref{th:mass} is equivalent to proving that 
\begin{enumerate}
	\item If $\bar \rho_2 < \dfrac{8\pi}{a^2}$ (resp. $\bar \rho_1 < 8\pi$), then $(\s, \s_2)$ is given by
	\begin{equation} \label{mass1}
		(4, 0) \qquad \left(\mbox{resp. } \left(0,\dfrac{4}{a^2}\right)  \right).
	\end{equation}
	
	\item If $\bar\rho_1 < 16\pi$, $\bar \rho_2 < \dfrac{16\pi}{a^2}$, then $(\s_1, \s_2)$ is one of the following types:
	\begin{equation} \label{mass2}
	\begin{split}
	\begin{array}{ll}
	 \mbox{if } a\geq \dfrac 12 \,: &	\qquad (4, 0), \qquad \left(0,\dfrac{4}{a^2}\right), \vspace{0.1cm}\\
	 \mbox{if } a< \dfrac 12 \,:&	\qquad (4, 0), \qquad \left(0,\dfrac{4}{a^2}\right), \qquad \left(4,\dfrac{4}{a^2}+\dfrac{8}{a}\right).
	\end{array}
	\end{split}
	\end{equation} 
\end{enumerate}
One can see the details of the above localized argument for example in \cite{lin-zh}.

\medskip

We introduce now some preliminary tools: we refer to \cite{jwy} for the details. The starting point is the following process which select a finite number of bubbling disks where the blowing-up limits resemble globally define Liouville-type equations. One has just to point out that due to the opposite-sign structure in \eqref{seq} the argument can be carried out with minor modifications.
\begin{proposition}
\label{pr2.1}
Let $u_k$ be a sequence of blow-up solutions of \eqref{seq} such that \eqref{a1}, \eqref{a2} and \eqref{a3} hold true. Then there exists finite sequence of points $\Sigma_k=\{x_1^k,\cdots,x_m^k\}$ (all $x_j^k\rightarrow0,~j=1,\cdots,m$) and positive scales $l_1^k,\cdots,l_m^k\rightarrow0$ such that, letting $M_{k,j}=\max_{i=1,2}\{u_{i,k}(x_j^k)\}$, we have
\begin{enumerate}
  \item $M_{k,j}=\max_{B_{l_j^k}(x_j^k),\,i=1,2}\{u_{i,k}\}$~for~$j=1,\cdots,m.$
  \item $\exp\left(\frac12 M_{k,j}\right)l_j^k\rightarrow\infty$ for $j=1,\cdots,m.$
  \item Let $\varepsilon_{k,j}=e^{-\frac{1}{2}M_{k,j}}$. Setting
  \begin{align}
  \label{2.1}
  \begin{split}
  &v_i^k(y)=u_{i,k}\bigr(\varepsilon_{k,j}y+x_j^k\bigr)+2\log\varepsilon_{k,j} 
  \end{split} \qquad \mbox{in } B_{l_j^k}(x_j^k),
  \end{align}
  we have the following alternative:
  \begin{itemize}
  \item[(a)] either $v_1^k\to v_1$ in $C_{loc}^2(\R^2)$ which satisfies the equation $\Delta v_1+\bar\rho_1 e^{v_1}=0$ and $v_2^k\to-\infty$ over all compact subsets of $\R^2$ and 
  $$
  	\frac{1}{2\pi}\int_{B_{l_j^k}(x_j^k)}\rho_{1,k} h_1 e^{u_{i,k}}>4,
  $$
  \item[(b)] or $v_2^k \to v_2$ in $C_{loc}^2(\R^2)$ which satisfies the equation $\Delta v_2+a^2 \bar\rho_2 e^{v_2}=0$ and $v_1^k\to-\infty$ over all compact subsets of $\R^2$ and
  $$
  	\frac{1}{2\pi}\int_{B_{l_j^k}(x_j^k)}\rho_{2,k} h_2 e^{u_{2,k}}>\frac{4}{a^2}.
  $$
  \end{itemize}
  \item There exists a constant $C>0$ independent of $k$ such that
  \begin{align*}
  \max_{i=1,2}\{u_{i,k}(x)\}+2\log\mathrm{dist}(x,\Sigma_k)\leq C, \qquad \forall x\in B_1.
  \end{align*}
\end{enumerate}
\end{proposition}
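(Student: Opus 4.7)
The plan is to follow the bubble extraction argument of \cite{jwy} for the sinh-Gordon equation, adapting only the coefficients to account for the asymmetry $a\in(0,1)$. The philosophy is that at each concentration point for the pair $(u_{1,k},u_{2,k})$, after dilating at the scale $\e_{k,j}=e^{-M_{k,j}/2}$, only one of the two exponentials survives in the limit, so that the rescaled equation collapses to a scalar Liouville problem producing the mass lower bound $4$ or $4/a^2$.

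First I would select the principal bubble. Let $x_1^k\in \ov{B_1}$ attain $M_{k,1}:=\max_{i,\,x\in B_1} u_{i,k}(x)$; up to a subsequence $x_1^k\to 0$ and $M_{k,1}\to\infty$ by \eqref{a1}. Setting $\e_{k,1}=e^{-M_{k,1}/2}$ and defining $v_i^k$ as in \eqref{2.1}, rescaling \eqref{seq} yields
\[
-\D v_1^k=\rho_{1,k}\, h_1(\e_{k,1}y+x_1^k)\,e^{v_1^k}-a\rho_{2,k}\, h_2(\e_{k,1}y+x_1^k)\,e^{v_2^k},
\]
and since $u_{2,k}=-au_k+\mathrm{const}$ one also obtains the algebraic identity
\[
v_2^k(y)=-a\,v_1^k(y)+C_k,\qquad C_k=2(1+a)\log\e_{k,1}-a\log\int_M h_1 e^{u_k}\,dV_g-\log\int_M h_2 e^{-au_k}\,dV_g.
\]
By construction $\max(v_1^k(0),v_2^k(0))=0$; assume WLOG $v_1^k(0)=0$, which leads to alternative (a).

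The key step is to show the companion component collapses. Jensen's inequality together with $\int_M u_k=0$ and $|M|=1$ gives $\log\int_M h_i e^{\pm u_k}\,dV_g\ge -C$, so $C_k\le -(1+a)M_{k,1}+C\to-\infty$. Since $v_1^k\le 0$ on the rescaled disk, the identity $v_2^k=-av_1^k+C_k$ forces $v_2^k\to-\infty$ locally uniformly, the second term in the rescaled equation disappears, and standard Brezis--Merle elliptic regularity upgrades $v_1^k\to v_1$ in $C^2_{\mathrm{loc}}(\R^2)$, with $v_1$ solving $-\D v_1=\bar\rho_1 e^{v_1}$ and $\int_{\R^2}e^{v_1}<\infty$. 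The Chen--Li classification then gives $\bar\rho_1\int_{\R^2}e^{v_1}=8\pi$, hence alternative (a) with the strict integral lower bound $>4$, provided $l_1^k/\e_{k,1}\to\infty$ sufficiently slowly. The case $v_2^k(0)=0$ is entirely symmetric, up to the factor $a^2$ coming from $\D v_2^k=-a\D v_1^k$, and produces alternative (b) with mass $>4/a^2$.

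The iteration and the decay estimate (4) come out of a standard Druet--Hebey--Robert maximality argument: having chosen $\Sg_k^{(j-1)}=\{x_1^k,\dots,x_{j-1}^k\}$, if $\max_i u_{i,k}(x)+2\log\mathrm{dist}(x,\Sg_k^{(j-1)})$ is unbounded on $B_1$, pick $x_j^k$ attaining its maximum, scale accordingly, and repeat; each new step captures at least $4$ or $4/a^2$ units of local mass, so by \eqref{bound} the procedure terminates at some $m$, and maximality at the last step is exactly (4). Taking $l_j^k=\tfrac14 \min_{i\ne j}|x_i^k-x_j^k|$ (or the distance to $\pa B_1$ when $m=1$) separates the disks and ensures $\e_{k,j}/l_j^k\to 0$, i.e.\ (2). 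The main obstacle I foresee is ruling out a secondary bubble produced \emph{in between} the scales of an already-selected bubble by the partner exponential reacting to the first; this is precisely the scenario forbidden by the rigidity $v_2^k=-av_1^k+C_k$ combined with the normalization $\int_M u_k=0$, which prevents both exponentials from concentrating simultaneously at the same point and at the same scale.
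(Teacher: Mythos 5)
The paper does not actually give a proof of Proposition \ref{pr2.1}; it cites \cite{jwy} and states the selection process carries over with minor modifications. Your reconstruction follows that same outline and isolates the right structural fact, namely the algebraic rigidity $v_2^k=-a\,v_1^k+C_k$ with $C_k=2(1+a)\log\e_{k,1}-a\log\int_M h_1e^{u_k}\,dV_g-\log\int_M h_2e^{-au_k}\,dV_g$, together with the Jensen bound $\log\int_M h_ie^{\pm\,\cdot\,u_k}\,dV_g\ge -C$ coming from $\int_M u_k=0$ and $|M|=1$, which gives $C_k\to-\infty$. That is indeed what makes the opposite-sign system collapse to a single Liouville equation at the bubble scale.

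There is, however, a genuine gap (a sign error) in the deduction that $v_2^k\to-\infty$ locally uniformly. You argue that this follows from $v_1^k\le 0$, but $v_1^k\le 0$ gives $-a\,v_1^k\ge 0$ and hence only $v_2^k\ge C_k$, which is a lower bound in the wrong direction and says nothing. What the identity actually requires is a local \emph{lower} bound $v_1^k\ge -C$ on compact sets. This does not come from the sign alone; it comes from elliptic theory: on the rescaled disk both $e^{v_1^k}\le 1$ and $e^{v_2^k}\le 1$, so $|\D v_1^k|\le C$ uniformly, and writing $v_1^k$ as a bounded Newtonian potential plus a harmonic function $\le 0$ that vanishes at the origin, the Harnack inequality yields a local lower bound. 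Only after this is established does $v_2^k=-a\,v_1^k+C_k\to-\infty$ locally uniformly follow, after which the rescaled equation closes on the scalar Liouville problem for $v_1$. As written your argument either has the inequalities backward or is circular if you intended to first send $v_2^k\to-\infty$ in order to bound $v_1^k$. A secondary remark: your last paragraph's worry about a ``secondary bubble at a different scale produced by the partner exponential'' is not in fact ruled out at the level of this proposition; that is exactly the slow-decay/fast-decay phenomenon which the paper handles later, via the spherical averages, the derivative identity \eqref{deriv}, and the Pohozaev identity \eqref{poh}, in the proof of Theorem \ref{th:mass}. The selection process only delivers the decay estimate (4), which is compatible with such intermediate-scale energy accumulation.
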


We point out that due to the local mass in the point (3) of the latter result and the bound \eqref{bound} the process stops after a finite number of steps. Moreover, by using the point (4) one can get a Harnack-type inequality outside the bubbling disks as follows.
\begin{proposition}
\label{pr2.2}
Letting $x_0\in B_1\setminus\Sigma_k,$ there exists $C>0$ independent of $x_0$ and $k$ such that
\begin{align*}
|u_{i,k}(x_1)-u_{i,k}(x_2)|\leq C\qquad\forall x_1,x_2\in B_{d(x_0,\Sigma_k)/2}(x_0), \quad i=1,2.
\end{align*}
\end{proposition}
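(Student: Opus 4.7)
My plan is to reduce the Harnack-type estimate to a standard Brezis--Merle alternative via a rescaling determined by the distance to $\Sig_k$.

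Set $r_k := d(x_0,\Sig_k)/2$, so that the claim amounts to a uniform oscillation bound for $u_{i,k}$ on $B_{r_k}(x_0)$. For any $x\in B_{3r_k/2}(x_0)$ one has $d(x,\Sig_k)\ge r_k/2$, and hence by point (4) of Proposition~\ref{pr2.1}, $u_{i,k}(x)\le C-2\log r_k$ for $i=1,2$. Setting $\hat u_{i,k}(y) := u_{i,k}(x_0+r_ky) + 2\log r_k$ for $y\in B_{3/2}\subset\R^2$, we therefore have $\hat u_{i,k}\le C$ on $B_{3/2}$; rescaling \eqref{seq} gives $-\D \hat u_{1,k} = \rho_{1,k}\tilde h_1\,e^{\hat u_{1,k}} - a\rho_{2,k}\tilde h_2\,e^{\hat u_{2,k}}$ with $\tilde h_i(y):=h_i(x_0+r_ky)$ uniformly bounded in $C^1$, and its right-hand side is uniformly bounded in $L^\infty(B_{3/2})$. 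Moreover, the normalization $\int_M h_i\, e^{u_{i,k}}\,dV_g=1$ together with $h_i\ge 1/C$ yields, after change of variables, $\int_{B_{3/2}} e^{\hat u_{i,k}}\,dy\le C$.

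Since $\Sig_k\cap B_{3r_k/2}(x_0)=\varnothing$ by construction, no blow-up point of the rescaled sequence lies in $B_{3/2}$. I would then invoke the standard Brezis--Merle alternative (cf.\ \cite{bm} and the argument in \cite{jwy}): up to a subsequence, either (a) $\hat u_{i,k}$ is uniformly bounded in $L^\infty_{loc}(B_{3/2})$, in which case interior $W^{2,p}$ elliptic estimates applied to the rescaled equation immediately give $\mathrm{osc}_{B_1}\hat u_{i,k}\le C$; or (b) $\hat u_{i,k}\to -\infty$ locally uniformly on $B_{3/2}$. In case (b), $e^{\hat u_{i,k}}\to 0$ uniformly on $\overline{B_{5/4}}$, so the right-hand side of the rescaled equation tends to zero uniformly on $\overline{B_{5/4}}$. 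Shifting $\tilde u_k := \hat u_{i,k} - \sup_{\overline{B_{11/8}}}\hat u_{i,k}$ one obtains $\tilde u_k\le 0$ on $\overline{B_{11/8}}$ with $\tilde u_k$ attaining $0$ at some $y_k^*\in\overline{B_{11/8}}$ (interior to $B_{3/2}$), while $-\D \tilde u_k\to 0$ uniformly. A Harnack chain applied to the non-negative function $-\tilde u_k$ on $\overline{B_{11/8}}$ starting from $y_k^*$ then propagates $-\tilde u_k\to 0$ to $\overline{B_1}$, whence $\mathrm{osc}_{B_1}\hat u_{i,k}\to 0$.

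In both alternatives $\mathrm{osc}_{B_1}\hat u_{i,k}\le C$ uniformly in $k$, and rescaling back yields $|u_{i,k}(x_1)-u_{i,k}(x_2)|\le C$ for all $x_1,x_2\in B_{r_k}(x_0)$ with $C$ independent of $x_0$ and $k$. The first two steps are routine consequences of point (4) of Proposition~\ref{pr2.1}; the delicate part is the Brezis--Merle dichotomy, and especially the handling of case (b), where the exponential structure of the right-hand side combined with the Harnack chain is used to convert the ``$-\infty$'' alternative into vanishing oscillation, since Harnack alone would only provide an oscillation bound proportional to $|\sup\hat u_{i,k}|\to\infty$.
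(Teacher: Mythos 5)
Your reduction via the rescaling $\hat u_{i,k}(y)=u_{i,k}(x_0+r_ky)+2\log r_k$ is fine as far as it goes: point (4) of Proposition \ref{pr2.1} does give $\hat u_{i,k}\le C$ on $B_{3/2}$, hence a uniformly bounded right-hand side, and alternative (a) is handled correctly. The gap is in alternative (b). First, the equation for $\hat u_{i,k}$ contains the exponential of the \emph{other} component, which may have slow decay; so the right-hand side is only bounded, not $o(1)$, and the claimed vanishing oscillation is unjustified. More seriously, the Harnack chain cannot be started at $y_k^*$: the function $-\tilde u_k$ is nonnegative only on $\overline{B_{11/8}}$, and the maximum of $\hat u_{i,k}$ over that closed ball will in general be attained on $\partial B_{11/8}$, where there is no full ball on which $-\tilde u_k\ge0$, hence no admissible Harnack ball containing $y_k^*$. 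This is not a removable technicality, because the conclusion you want in case (b) does not follow from the data you use: $\hat u_k(y)=-k+\varepsilon_0 k\,y_1$ is harmonic on $B_{3/2}$ (bounded right-hand side), satisfies $\hat u_k\le -k/4$ there, has $\int_{B_{3/2}}e^{\hat u_k}\,dy\to0$, and yet $\operatorname{osc}_{B_1}\hat u_k=2\varepsilon_0 k\to\infty$. A tilted, very negative harmonic part is compatible with everything your argument invokes, and your proof never uses the hypothesis \eqref{a3} of bounded boundary oscillation, without which the proposition is false.

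The proof the paper refers to (\cite{jwy}, in the spirit of Lin--Zhang) is a Green representation on the whole ball, which is exactly where \eqref{a3} and \eqref{bound} enter. Write $u_{i,k}(x)=\int_{B_1}G(x,y)f_{i,k}(y)\,dy+h_{i,k}(x)$, with $f_{i,k}$ the corresponding right-hand side and $h_{i,k}$ harmonic with boundary values $u_{i,k}|_{\partial B_1}$; by \eqref{a3} and the maximum principle $h_{i,k}$ has bounded oscillation in all of $B_1$. For the potential part and $x_1,x_2\in B_{r_k}(x_0)$ one splits the integration: for $|y-x_0|\ge\tfrac32 r_k$ one has $\tfrac15\le |x_1-y|/|x_2-y|\le 5$, so $|G(x_1,y)-G(x_2,y)|\le C$ and only the mass bound $\|f_{i,k}\|_{L^1(B_1)}\le C$ from \eqref{bound} is needed; for $|y-x_0|<\tfrac32 r_k$ one has $d(y,\Sigma_k)\ge r_k/2$, so point (4) gives $|f_{i,k}(y)|\le Cr_k^{-2}$, while the change of variables $y=x_0+r_kz$ shows $\int_{B_{3r_k/2}(x_0)}\bigl|\log\tfrac{|x_1-y|}{|x_2-y|}\bigr|\,dy\le Cr_k^{2}$, so this region also contributes $O(1)$. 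If you wish to keep your rescaled formulation, you must import the control of the harmonic part coming from \eqref{a3} in this way; the purely local dichotomy (a)/(b) on $B_{3/2}$ cannot detect it.
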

The latter estimates gives us bounded oscillation away from the blow-up disks and hence the behavior of a solution can be encoded in its spherical average. More precisely, let $x_k\in\Sigma_k$ and $\tau_k=\frac12d({x_k,\Sigma_k\setminus\{x_k\}})$, then for $x,y\in B_{\tau_k}(x_k)$ and $|x-x_k|=|y-x_k|$ we have $u_{i,k}(x)=u_{i,k}(y)+O(1)$ and hence $u_{i,k}(x)=\overline{u}_{i,x_k}(r)+O(1)$ where $r=|x_k-x|$ and
$$\overline{u}_{i,x_k}(r)=\frac{1}{2\pi}\int_{\partial B_r(x_k)}u_{i,k}.$$
We will see in the sequel how to use this property.

\medskip

In each bubbling disk of Proposition \ref{pr2.1} one can  derive some information on the local mass by means of a Pohozaev-type identity. We can easily show that 
\begin{align}
\label{2.2}
\begin{split}
&\int_{B_r}\left(\rho_{1,k} x\cdot\nabla h_1e^{u_{1,k}}+\rho_{2,k}x\cdot\nabla h_2e^{u_{2,k}}\right)+2\int_{B_r}\left(\rho_{1,k}h_1e^{u_{1,k}}+\rho_{2,k}h_2e^{u_{2,k}}\right)\\
&=r\int_{\partial B_r}\left(|\partial_{\nu}u_{1,k}|^2-\frac12|\nabla u_{1,k}|^2\right)+r\int_{\partial B_r}\left(\rho_{1,k}h_1e^{u_k}+\rho_{2,k}h_2e^{u_{2,k}}\right).
\end{split}
\end{align}
Consider now the above identity in $B_{l_j^k}(x_j^k)$ and let
\begin{align*}
\tilde{\sigma}_i^k(l_j^k) = \frac{1}{2\pi}\int_{B_{l_j^k}(x_j^k)}\rho_{i,k}h_i e^{u_{i,k}}
\end{align*}
be the local masses in this ball. To estimate the second term on the right hand side of \eqref{2.2} it is useful to give the following definition: we say $u_{i,k}$  has fast decay at $x\in B_1$ if
$$u_{i,k}(x)+2\log\mathrm{dist}(x,\Sigma_k)\leq -N_k, $$
hold for some $N_k\rightarrow+\infty$. If instead
$$u_{i,k}(x)+2\log\mathrm{dist}(x,\Sigma_k)\geq -C, $$
for some $C>0$ independent of $k$, we say $u_{i,k}$ has a slow decay at $x$. If both $u_{i,k}$, $i=1,2$ have fast decay on $\partial B_{l_j^k}(x_j^k)$  the second term on the right hand side of \eqref{2.2} is $o(1)$. It is indeed possible to show that from \eqref{2.2} we get the identity
\begin{equation} \label{poh}
4\left(\tilde{\sigma}_1^k(l_j^k)+\tilde{\sigma}_2^k(l_j^k)\right)=\bigr(\tilde{\sigma}_1^k(l_j^k)-a\tilde{\sigma}_2^k(l_j^k)\bigr)^2+o(1),
\end{equation}
see \cite{jwy} for the details.

\medskip

We are now in the position to prove the local mass Theorem \ref{th:mass}.

\begin{proof}[Proof of Theorem \ref{th:mass}.]
We need to derive the values in \eqref{mass1}, \eqref{mass2}. We follow here the argument in \cite{jwy} with some modification, so we will be sketchy. Let $x_j^k\in\Sigma_k$, where $\Sigma_k$ is obtained in Proposition \ref{pr2.1}, and suppose for simplicity that $x_j^k=0$. Let $\tau_k=\frac12\mathrm{dist}(0,\Sigma_k\setminus\{0\})$, set
$$\sigma_i^k(r,x_j^k)=\sigma_i^k(r)=\frac{1}{2\pi}\int_{B_r(0)}\rho_{i,k}h_i e^{u_{i,k}},$$
for $0<r\leq\tau_k$ and $\overline{u}_{i,k}(r)=\frac{1}{2\pi r}\int_{\partial B_r(0)}u_{i,k}.$ A useful observation is the following:
\begin{equation} \label{deriv}
\begin{split}
&\frac{d}{dr}\overline{u}_{1,k}(r)=\frac{1}{2\pi r} \int_{\p B_r} \frac{\p u_{1,k}}{\p \nu} = \frac{1}{2\pi r} \int_{B_r} \D u_{1,k} = \frac{-\sigma_1^k(r)+a\sigma_2^k(r)}{r}\,, \\
&\frac{d}{dr}\overline{u}_{2,k}(r) = a \frac{\sigma_1^k(r)-a\sigma_2^k(r)}{r}\,.
\end{split}
\end{equation}

\medskip

\no \textbf{Proof of (1).} We prove here \eqref{mass1}. We are assuming $\bar\rho_2 < \dfrac{8\pi}{a^2}$ (for the other alternative we can reason in the same way). By Proposition \ref{pr2.1} we observe that
\begin{align*}
\max_{i=1,2}\{u_{i,k}(x)\}+2\log|x|\leq C, \qquad |x|\leq \tau_k,
\end{align*}
and letting $-2\log\delta_k=\max_{x\in B_{\tau_k}(0)}\max_{i=1,2}\{u_{i,k}(x)\}$ we consider
\begin{align*}
\begin{split}
v_i^k(y)=u_{i,k}(\delta_ky)+2\log \delta_k, 
\end{split} \qquad |y|\leq\tau_k/\delta_k.
\end{align*}
As in Proposition \ref{pr2.1} one of $v_i^k$ converges and the other one tends to minus infinity over the compact subsets of $\mathbb{R}^2$. Suppose that $v_1^k\to v_1$ in $C_{loc}^2(\mathbb{R}^2)$ and $v_2^k\to-\infty$ over any compact subset of $\R^2$, where $v_1$ satisfies $\D v_1 + \bar\rho_1 e^{v_1}=0$ in $\R^2$. Then by the classifications result of the latter equation one can choose $R_k\rightarrow\infty$ such that
\begin{equation}
\label{2.5}
\sigma_1^k(\delta_kR_k)=\frac{1}{2\pi}\int_{B_{R_k}}\rho_{1,k}	h_1(\delta_ky)\,e^{v_1^k}=4+o(1), \qquad \sigma_1^k(\delta_kR_k)=\frac{1}{2\pi}\int_{B_{R_k}}\rho_{2,k}	h_2(\delta_ky)\,e^{v_2^k}=o(1).
\end{equation}
Then we get $\sigma_1^k(\delta_kR_k)=4+o(1)$ and $\sigma_2^k(\delta_kR_k)=o(1).$ If instead $v_2^k\to v_2$ with $\D v_2 + \bar\rho_2 e^{v_2}=0$ we would get $\sigma_2^k(\delta_kR_k)=\dfrac{4}{a^2}+o(1)$ and $\sigma_1^k(\delta_kR_k)=o(1).$ The latter estimate is not possible by the assumption on $\bar\rho_2$ and the bound \eqref{bound}.

Now, we need to consider the energy's increasing from $B_{\delta_kR_k}$ to $B_{\tau_k}$. Observe that on $\partial B_{\delta_kR_k}$ by \eqref{deriv} and \eqref{2.5} we have
$$\frac{d}{dr}(\overline{u}_{2,k}(r)+2\log r)>0,$$
in other words $u_{2,k}$ may become a slow decay component when $r$ increases. If this does not happen the energy does not change and we keep having
\begin{equation}
\label{2.6}
\sigma_1^k(\tau_k)=4+o(1), \qquad \sigma_2^k(\tau_k)=o(1),
\end{equation}
see for example \cite{jwy}.

\medskip

If $u_{2,k}$ becomes slow decaying before $r$ reaches $\tau_k$, i.e
\begin{equation*}
\bar u_{2,k}(s)+2\log s\geq -C,
\end{equation*}
for some $C>0$, then $u_{2,k}$ starts to increase its energy. In the case $\tau_k \cong s$ by Proposition \ref{pr2.2} we still get $\sigma_1^k(\tau_k)=4+o(1)$ with $u_{1,k}$ fast decay, while $u_{2,k}$ has slow decay and its local mass can not be evaluated at this point. If instead $\tau_k\gg s$  we can find $N>1$ such that on $\partial B_{Ns}$
\begin{align}
&\sigma_2^k(Ns)\geq \dfrac{2}{a^2} + \dfrac 4a, \qquad \sigma_1^k(Ns)=4+o(1),\label{est}\\
&\frac{d}{dr}(\bar u_{2,k}(r)+2\log r)\mid_{r=Ns}<0, \qquad \frac{d}{dr}(\bar u_{1,k}(r)+2\log r)\mid_{r=Ns}>0,\nonumber
\end{align}
see \cite{jwy}. The idea is that from $r=s$ to $r=Ns$ the energy of $u_{2,k}$ increases and hence the derivative the associated derivative changes from positive to negative by \eqref{deriv}. On the other hand, By Proposition~\ref{pr2.2} $u_{1,k}$ still has fast decay and hence its energy does not change. At this point it is possible to take $N_k$ tending to $+\infty$ slowly such that and on $\p B_{N_k s}$ both $u_{i,k}$, $i=1,2$  have fast decay. Therefore, we can use the Pohozaev identity \eqref{poh} in $B_{N_k s}$ and \eqref{est} to obtain
\begin{equation} \label{est2}
\sigma_{1}^k(N_k s)=4+o(1), \qquad \sigma_2^k(N_k s)=\frac{4}{a^2}+\frac{8}{a} + o(1).
\end{equation}
Again, the latter estimate is not possible by the assumption on $\bar\rho_2$ and the bound \eqref{bound}.

\medskip

It follows that in each bubbling disk we have $\sigma_1^k(\tau_k)=4+o(1)$ with $u_{1,k}$ fast decay. Since it has fast decay, by Proposition \ref{pr2.2} the energy contribution of $u_{1,k}$ comes just from the bubbling disks, more precisely
\begin{equation} \label{fast}
	\sigma_{1}^k(r) = \sum_{j=1}^k \sigma_{1}^k(\t_k^j,x_k^j) + o(1) = 4k + o(1).
\end{equation}
As before, by taking $r$ suitable we have both $u_{i,k}$, $i=1,2$  have fast decay and we can use the Pohozaev identity \eqref{poh} in $ B_{r}$. By the latter estimate, by the assumption on $\bar\rho_2$ and by the bound \eqref{bound} the only possibility is 
$$
	(\sigma_{1}^k(r), \sigma_{2}^k(r)) = (4,0) + o(1),
$$
which concludes the proof of (1) of Theorem \ref{th:mass}.

\

\no \textbf{Proof of (2).} We prove here \eqref{mass2}. We assume now $\bar\rho_1 < 16\pi$, $\bar \rho_2 < \dfrac{16\pi}{a^2}$ and we proceed as in the proof of (1). In the first step we observed that one of $v_i^k$ converges and the other one tends to minus infinity over the compact subsets of $\mathbb{R}^2$. In this case both situations are possible. Then, analogously as in \eqref{2.5} we have $\bigr(\sigma_1^k(\delta_kR_k), \sigma_2^k(\delta_kR_k)\bigr)$ is a $o(1)$ perturbation of
\begin{equation} \label{est4}
	(4,0) \qquad \mbox{or} \qquad \left(0,\frac{4}{a^2}\right).
\end{equation}
Reasoning as in the proof of (1) concerning \eqref{est2}, starting from $(4,0)$ we could get
\begin{equation} \label{est3}
 \left( 4, \frac{4}{a^2}+\frac{8}{a} \right),
\end{equation} 
while starting from $\left(0,\dfrac{4}{a^2}\right)$ we would get $\left(\dfrac{8}{a}+{4},\dfrac{4}{a^2}\right)$. The latter possibility can not happen due to the assumption on $\bar\rho_1$ and by the bound \eqref{bound}. Hence, let us focus on \eqref{est3}. We first point out that the latter local mass is present just for $a<\dfrac 12$ due to the assumption on $\bar\rho_2$ and by the bound \eqref{bound}. At this point the role of $u_{1,k}$ and $u_{2,k}$ is exchanged in the above argument: this means that if there is a change in the local mass then the mass of $u_{2,k}$ changes by $o(1)$ while the one of $u_{1,k}$ jumps according to the Pohozaev identity \eqref{poh}. However, it is easy to see that by the assumption on $\bar\rho_1$ and by the bound \eqref{bound} such a jump is not possible. Therefore, we exhausted all the possibilities with \eqref{est4} and \eqref{est3}. Moreover, as before the component with bigger local mass has fast decay property.

\medskip

Finally, we have to combine the bubbling disks. We claim there is at most one bubbling disk. Indeed, using both the assumptions on $\bar\rho_i$, $i=1,2$ and the bound \eqref{bound} we can not have two bubbling disks with the same type of local mass. Moreover, by the same reason we can not have both \eqref{est4} and \eqref{est3} type disks. The only possibility to have two disks is the case with $(4,0)$ and $\left(0,\dfrac{4}{a^2}\right)$ (actually, to be more precise we would just know in one disk $\s_1^k(r_1,x_i^k)=4+o(1)$ with $u_{1,k}$ fast decaying and in the other disk $\s_2^k(r_2,x_j^k)=\dfrac{4}{a^2}+o(1)$ with $u_{2,k}$ fast decaying: however, we can treat this case in a similar way). Since in the first group $u_{1,k}$ has fast decay and in the second one $u_{2,k}$ has fast decay we deduce by Proposition~\ref{pr2.2} that both $u_{i,k}$ have fast decay. Similarly as in \eqref{fast} one can derive that the combination of this two disks yields to a local mass of the type
$$
	\left(4,\dfrac{4}{a^2}\right).
$$ 
On the other hand, since both $u_{i,k}$ have fast decay we can apply the Pohozaev identity \eqref{poh} to rule out the latter possibility. 

It follows that we have just one bubbling disk and the possibly local masses are those in \eqref{est4} and \eqref{est3}. 

\end{proof}

\medskip

\section{Existence results in the semi-coercive case} \label{sec:exist1}

Aim of this section is to present the variational argument and to derive the first existence result, see Theorem \ref{th:ex1}. The strategy relies on the ideas developed for the standard mean field equation \eqref{eq:liouv}: in \cite{zhou} it was used for the sinh-Gordon case \eqref{eq:sinh}. Therefore, we will presents just the main steps, highlighting the differences. We will focus on the case $\rho_1<8\pi, \,\rho_2 \in \left(\dfrac{8\pi}{a^2}k, \dfrac{8\pi}{a^2}(k+1) \right)$ since it requires a few new estimates. Some of the tools will be presented with full details since they will be used in the next section. 

\medskip

We start by stating two important corollaries of the compactness result in Theorem \ref{th:comp}. First, we observe there exists a high sublevel $J_\rho^L$ (recall the notation in \eqref{sub}) containing all the critical points of the functional. Then, by deforming the space $H^1(M)$ onto such sublevel we obtain the following result.
\begin{pro} \label{high}
Let $K$ be as in Theorem \ref{th:comp}. Suppose $\rho=(\rho_1,\rho_2)\in K$. Then, for some large $L>0$, $J_\rho^L$ is a deformation retract of $H^1(M)$. In particular it is contractible.
\end{pro}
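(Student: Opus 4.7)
The plan is to combine the a priori bound on critical points supplied by Theorem \ref{th:comp} with a standard deformation-lemma argument, in the spirit of the analogous statement in \cite{jwy2}.

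First I would note that $J_\rho$ is invariant under the translation $u \mapsto u+c$ for $c\in\R$, as one checks directly from \eqref{func} using $|M|=1$; consequently its critical points may be analyzed modulo constants, i.e.\ inside $\mathring H^1(M)$. By Theorem \ref{th:comp}, the set of such critical points is bounded in $C^{2,\a}(M)$, hence in $H^1(M)$. Since $J_\rho$ is continuous on $H^1(M)$, the quantity $L_0:=\sup\{J_\rho(u):u \text{ critical}\}$ is finite, and for any $L>L_0$ the functional $J_\rho$ has no critical points in the open set $\{J_\rho>L\}\subset H^1(M)$.

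Next I would build a continuous deformation retraction $\eta:[0,1]\times H^1(M)\to H^1(M)$ with $\eta(0,\cdot)=\mathrm{id}$, $\eta(t,u)=u$ for all $u\in J_\rho^L$, and $\eta(1,H^1(M))\subset J_\rho^L$. The standard construction is a cutoff negative pseudo-gradient flow: take a locally Lipschitz pseudo-gradient field $V$ for $J_\rho$ on its regular set, a smooth $\chi\geq 0$ that vanishes on $J_\rho^L$ and is positive elsewhere, and integrate the ODE $\dot\sigma=-\chi(\sigma)\,V(\sigma)/(1+\|V(\sigma)\|)$. Since $J_\rho$ strictly decreases along the flow on $\{J_\rho>L\}$ and there are no critical points there, each trajectory enters $J_\rho^L$; after a time reparametrization this yields $\eta$. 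Contractibility of $J_\rho^L$ then follows at once, because $H^1(M)$ is contractible as a topological vector space and a deformation retract inherits the homotopy type of the ambient space.

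The main obstacle is ensuring that trajectories actually descend into $J_\rho^L$ rather than escape to infinity while keeping $J_\rho>L$; in the supercritical regime $J_\rho$ is not coercive, so this cannot be read off directly from the energy estimate. This is precisely where Theorem \ref{th:comp} enters decisively: any Palais--Smale sequence $\{u_n\}$ with $J_\rho(u_n)\to c\geq L>L_0$ and $J_\rho'(u_n)\to 0$ in $H^{-1}(M)$ would, by the elliptic estimates underpinning Theorem \ref{th:comp}, produce in the limit a genuine solution of \eqref{eq} at level $c$, contradicting the definition of $L_0$. Thus the required Palais--Smale-type compactness holds above level $L_0$ and the deformation can be carried out. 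The details of this passage are identical to those of the sinh-Gordon case treated in \cite{jwy2}.
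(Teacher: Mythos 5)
The structural outline of your proof is the right one—bound all critical points using Theorem \ref{th:comp}, let $L_0$ be the sup of critical values, then deform $H^1(M)$ onto $J_\rho^L$ via a pseudo-gradient flow—but the step you single out as "the main obstacle" is precisely where a genuine gap appears. You claim that Theorem \ref{th:comp} forces a Palais--Smale-type compactness above level $L_0$, on the grounds that any PS sequence would "produce in the limit a genuine solution." Theorem \ref{th:comp} bounds \emph{exact} solutions of \eqref{eq}; it says nothing about sequences with $J_\rho'(u_n)\to 0$ in $H^{-1}$. To extract a limiting solution from a PS sequence you first need the sequence to be bounded in $H^1$, and then you need strong (not merely weak) convergence to carry the level $c$ over to the limit. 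Both of these are exactly the content of the Palais--Smale condition you are trying to establish, so the argument is circular. Boundedness of PS sequences is famously \emph{not} known for mean field functionals in the supercritical range—this is why the deformation machinery cannot be run naively—and elliptic regularity does not repair this: it upgrades a weak $H^1$ limit to a classical solution, but cannot produce the $H^1$ bound in the first place, nor the strong convergence needed to rule out energy loss.

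The paper instead invokes the deformation lemma of Lucia \cite{lucia}, which is built precisely to bypass this problem. Roughly, one exploits the affine dependence of $J_\rho$ on the pair $\rho=(\rho_1,\rho_2)$ and a Struwe-type monotonicity argument: for almost every parameter value near $\rho$ one obtains \emph{bounded} PS sequences, hence solutions at nearby parameters; the uniform $C^{2,\a}$ bounds of Theorem \ref{th:comp} (which do apply, since these are honest solutions) then let one pass to the limit in the parameter and carry out the deformation for the original $\rho$ without ever asserting PS at $\rho$ itself. If you want to make your proof rigorous, replace the PS-based descent argument with an application of that lemma, using Theorem \ref{th:comp} only in the role it can actually play—compactness of genuine solutions, uniformly for $\rho$ in the compact set $K$.
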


\medskip

The Palais-Smale condition can by bypassed by using the compactness result as in \cite{lucia} to get the following property.
\begin{pro} \label{top-arg}
Let $K$ be as in Theorem \ref{th:comp}. Let $a,b\in\R$ be such that $a<b$ and $J_\rho$ has no critical points $u\in H^1(M)$ with $a\leq J_\rho(u) \leq b$. Suppose $\rho=(\rho_1,\rho_2)\in K$. Then, $J_\rho^a$ is a deformation retract of $J_\rho^b$.
\end{pro}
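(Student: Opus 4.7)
The proof is the classical deformation retract argument via the negative pseudo-gradient flow of $J_\rho$; the only subtlety is that the Palais-Smale condition for $J_\rho$ is not automatic on all of $H^1(M)$, and must be supplied by the compactness of Theorem \ref{th:comp}, following the scheme of \cite{lucia}.

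The first step is the flow construction. On $\{u \in H^1(M) : a < J_\rho(u) < b + 1\}$ one builds a locally Lipschitz pseudo-gradient vector field $W$ for $J_\rho$ satisfying $\langle \nabla J_\rho(u), W(u) \rangle \geq \tfrac{1}{2}\|\nabla J_\rho(u)\|^2$, truncated by a cut-off in $J_\rho$ to extend continuously by $0$ on $J_\rho^a$ and normalized to have bounded length. Let $\eta(t,u)$ be the corresponding flow. Since $J_\rho(\eta(t,u))$ is non-increasing, the natural candidate retraction is $r(u) = \eta(T(u),u)$ with $T(u) = \inf\{t \geq 0 : J_\rho(\eta(t,u)) \leq a\}$ (and $r(u) = u$ on $J_\rho^a$).

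The heart of the argument is the uniform gradient bound
\begin{equation*}
    \inf \bigl\{\, \|\nabla J_\rho(u)\|_{H^{-1}} \,:\, u \in H^1(M), \; a \leq J_\rho(u) \leq b \,\bigr\} \;>\; 0,
\end{equation*}
which makes $T(u)$ finite and locally bounded, hence continuous. I would argue by contradiction: suppose $u_n \in H^1(M)$ with $J_\rho(u_n) \in [a,b]$ and $\nabla J_\rho(u_n) \to 0$ in $H^{-1}$. Then $u_n$ solves \eqref{eq} up to a remainder vanishing in $H^{-1}$. The selection process of Proposition \ref{pr2.1}, the Harnack-type bound of Proposition \ref{pr2.2}, and the Pohozaev identity \eqref{poh} are all local and robust, so they continue to apply to $\{u_n\}$ with negligible error terms. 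Any blow-up mass $(\sigma_1,\sigma_2)$ obtained in the limit therefore belongs to the list of Theorem \ref{th:mass} at the parameter $\rho$. But the hypothesis $\rho \in K$ excludes exactly these quantized configurations, as in the proof of Theorem \ref{th:comp}. Hence $\{u_n\}$ cannot blow up; it is bounded in $H^1(M)$, and elliptic regularity yields a subsequence converging strongly to a critical point of $J_\rho$ with value in $[a,b]$, contradicting the no-critical-point hypothesis. Once this uniform bound is in hand, the finite-time hitting of $J_\rho^a$, the continuity of $T$, and the retract property of $r$ are standard.

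The main obstacle is the passage from the compactness of genuine solutions (Theorem \ref{th:comp}) to the Palais-Smale analogue for approximate solutions needed above. What must be checked is that the quantization of local masses in Section \ref{sec:blow-up} survives an $H^{-1}$-small error on the right-hand side of \eqref{seq}; concretely, the scaled bubbles $v_i^k$ in Proposition \ref{pr2.1} still converge to entire Liouville solutions, and the Pohozaev identity still yields \eqref{poh} up to an extra $o(1)$. This is essentially the content of the abstract framework of \cite{lucia} applied to the present functional.
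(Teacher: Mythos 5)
Your route is genuinely different from the paper's, and it has a gap. The paper's proof of Proposition \ref{top-arg} is a citation to Lucia's deformation lemma \cite{lucia}, and the whole point of that lemma is to \emph{avoid} verifying any form of the Palais-Smale condition. Lucia's mechanism is a Struwe-type monotonicity/perturbation argument: one introduces a family of nearby functionals $J_{\rho'}$ (here provided by varying $\rho$ in the compact set $K$), uses the boundedness of the set of \emph{exact} critical points for that whole family (Theorem \ref{th:comp}) to confine a suitably modified gradient flow to a bounded region of $H^1(M)$, and obtains the deformation retract without ever analyzing unbounded Palais-Smale sequences. Your proposal instead tries to establish a uniform lower bound on $\|\nabla J_\rho\|$ on $J_\rho^{-1}([a,b])$ — effectively the Palais-Smale condition on a strip — by claiming that the blow-up machinery of Section \ref{sec:blow-up} applies to Palais-Smale sequences "with negligible error terms."

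That last claim is the gap. The selection process (Proposition \ref{pr2.1}), the Harnack estimate (Proposition \ref{pr2.2}), and the Pohozaev identity \eqref{poh} are derived for sequences of \emph{exact} solutions of \eqref{seq}; they rely on pointwise elliptic estimates and boundary integrals that are not directly controlled by a remainder that is merely $o(1)$ in $H^{-1}$. In particular, the Pohozaev identity would acquire an additional term $\int_{B_r} x\cdot\nabla u_k\, f_k$ with $f_k\to 0$ only in $H^{-1}$, and the Brezis--Merle/selection arguments require $L^1$- or pointwise control of the error, not weak $H^{-1}$ control. Promoting the compactness of genuine solutions to a uniform gradient bound for approximate solutions is precisely what is known \emph{not} to be automatic for mean field-type functionals (Palais-Smale can fail even when solutions are compact), and your closing claim that this extension "is essentially the content of the abstract framework of \cite{lucia}" is incorrect: Lucia's framework does not extend the blow-up analysis to approximate solutions, it is designed to make that extension unnecessary. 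To close the gap you would either need to carry out the blow-up analysis for perturbed equations in detail (a substantial additional technical project not present in the paper), or replace your contradiction argument with Lucia's monotonicity scheme, which is what the paper actually invokes.
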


\medskip

Our aim will be then to show that the very low sublevels of $J_\rho$ have non-trivial homology. The first tool we need in this direction is an improved version of the Moser-Trudinger inequality \eqref{m-t}. To this end we state a local version of it, in the spirit of \cite{jev,mal-ru}.
\begin{pro}\label{mt-local}
Let $\delta>0$ and $\Omega_1\subset\Omega_2\subset M$ be such that
$d(\Omega_1, \partial \Omega_2) \geq \delta$. Then, for any $\e > 0$
there exists a constant $C = C(\e, \delta)$ such that for all $u
\in H^1(M)$
$$ 
8\pi \log \int_{\Omega_1}  e^{u-\fint_{\O_2} u}\, dV_g + \frac{8\pi}{a^2}\log \int_{\Omega_1} e^{-a\left(u-\fint_{\O_2} u\right)}\, dV_g \leq \frac{1}{2} \int_{\Omega_2} |\nabla u|^2 \,dV_g + \e\int_{M} |\nabla u|^2 \,dV_g + C. 
$$
\end{pro}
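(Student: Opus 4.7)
The strategy is to reduce to the global Moser--Trudinger inequality \eqref{m-t} via a cut-off argument, in the spirit of the localized inequalities appearing in \cite{jev,mal-ru}. Let $\tilde\Omega$ be an intermediate set with $\Omega_1 \subset \tilde\Omega \subset \Omega_2$ satisfying $d(\Omega_1, \partial\tilde\Omega) \geq \delta/2$ and $d(\tilde\Omega, \partial\Omega_2) \geq \delta/2$, and let $\eta \in C^\infty(M)$ be a smooth cutoff with $\eta \equiv 1$ on $\Omega_1$, $\eta \equiv 0$ on $M \setminus \tilde\Omega$, $0 \leq \eta \leq 1$, and $\|\nabla \eta\|_\infty + \|\Delta \eta\|_\infty \leq C(\delta)$. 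Setting $w = u - \fint_{\Omega_2} u\, dV_g$ (so that $\fint_{\Omega_2} w\, dV_g = 0$) and $v = \eta w \in H^1(M)$, the auxiliary function $v$ is supported in $\tilde\Omega$ and coincides with $w$ on $\Omega_1$.

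Applying \eqref{m-t} to $v$ and using that on $\Omega_1$ one has $v = u - \fint_{\Omega_2} u$, so that
\[
\int_{\Omega_1} e^{\pm \alpha(u - \fint_{\Omega_2} u)}\, dV_g = e^{\pm \alpha \bar v} \int_{\Omega_1} e^{\pm \alpha(v - \bar v)}\, dV_g \leq e^{\pm \alpha \bar v} \int_M e^{\pm \alpha(v - \bar v)}\, dV_g
\]
for $\alpha = 1$ and $\alpha = -a$, passing to logarithms and summing with the appropriate weights gives
\[
\text{LHS} \leq \frac{1}{2}\int_M |\nabla v|^2\, dV_g + C + 8\pi \Bigl(1 - \frac{1}{a}\Bigr) \bar v.
\]
The Poincar\'e inequality on $\Omega_2$ (valid because $\fint_{\Omega_2} w = 0$) gives $|\bar v| \leq \|v\|_{L^1(M)} \leq C \|\nabla u\|_{L^2(\Omega_2)}$, and Young's inequality absorbs this linear $\bar v$-term into $\e \int_M |\nabla u|^2\, dV_g + C(\e, \delta)$. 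For the quadratic term, expanding $|\nabla v|^2 = \eta^2 |\nabla u|^2 + 2 \eta w\, \nabla u \cdot \nabla \eta + w^2 |\nabla \eta|^2$ and integrating by parts on the cross term (there are no boundary contributions since $M$ is closed) yields
\[
\int_M |\nabla v|^2\, dV_g = \int_M \eta^2 |\nabla u|^2\, dV_g - \int_M w^2\, \eta\, \Delta \eta\, dV_g \leq \int_{\Omega_2} |\nabla u|^2\, dV_g + C(\delta)\, \|w\|^2_{L^2(\Omega_2)},
\]
and a further application of Poincar\'e turns $\|w\|^2_{L^2(\Omega_2)}$ into $C\, \|\nabla u\|^2_{L^2(\Omega_2)}$.

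The main technical obstacle is this final absorption step: the error $C(\delta)\, \|\nabla u\|^2_{L^2(\Omega_2)}$ must be reabsorbed into $\e \int_M |\nabla u|^2\, dV_g + C(\e, \delta)$ while the leading coefficient on $\int_{\Omega_2} |\nabla u|^2$ is kept exactly $\tfrac{1}{2}$, as dictated by the sharp constant in \eqref{m-t}. This is achieved through a finer choice of the cutoff --- for instance $\eta = \phi^2$ for a smooth $\phi$, so that $|\nabla \eta|^2 = 4 \eta\, |\nabla \phi|^2$ gains an extra factor of $\eta$ --- combined with the flexibility provided by the $\e \int_M |\nabla u|^2$ slack on the right-hand side. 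Localization arguments of this type have been carried out in detail in \cite{mal-ru} and \cite{jev} in related contexts, and they adapt to the present asymmetric setting with essentially no changes.
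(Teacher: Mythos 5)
Your overall skeleton (cut off near $\O_1$, apply the global inequality \eqref{m-t} to the cut-off function, control the resulting average term by Poincar\'e and Young) is the same as the paper's, and you correctly locate the real difficulty: the cut-off produces a quadratic error $C(\d)\,\|w\|_{L^2(\O_2)}^2$, which after Poincar\'e becomes $C(\d)\,C_P(\O_2)\int_{\O_2}|\n u|^2$ with a \emph{fixed}, non-small constant, while the statement only allows the slack $\e\int_M|\n u|^2$ with $\e$ arbitrarily small and a coefficient exactly $\tfrac12$ in front of $\int_{\O_2}|\n u|^2$. The step you propose to resolve this is where the argument breaks down: taking $\eta=\phi^2$ only replaces $|\n\eta|^2$ by $4\eta|\n\phi|^2$, so the problematic term is still of size $C(\d)\int_{\O_2} w^2$, and the obstruction was never the pointwise size of the cutoff derivative but the fact that $\|w\|_{L^2(\O_2)}$ is controlled by $\|\n u\|_{L^2(\O_2)}$ only with an $O(1)$ Poincar\'e constant. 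The ``$\e$-slack'' cannot absorb a term with a fixed coefficient, so as written your argument proves the inequality with $\tfrac12$ replaced by $\tfrac12+C(\d)$, which is strictly weaker and useless for the sharp improved inequalities it feeds into.

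The missing idea (which is exactly the content of Proposition 2.3 in \cite{mal-ru} that you cite away, and what the paper does) is a frequency decomposition \emph{before} cutting off: write $u=v+w$ with respect to the eigenfunctions of $-\D$ on $\O_2$ with Neumann boundary conditions, $v$ being the projection onto eigenvalues $\leq\L$ and $w$ the high-frequency remainder, and apply the cutoff only to $w$. Then $\int_{\O_2}w^2\leq \L^{-1}\int_{\O_2}|\n w|^2$, so choosing $\L=\L(\e,\d)$ large makes the quadratic error term $\leq \e\int_{\O_2}|\n u|^2$, while $\int_{\O_2}|\n w|^2\leq\int_{\O_2}|\n u|^2$ keeps the leading coefficient at $\tfrac12$ (up to $1+\e$); the low-frequency part enters only through $\|v\|_{L^\infty(\O_1)}$, which by finite dimensionality is bounded by $C_\L\|\n u\|_{L^2(\O_2)}$, a \emph{linear} term that Young's inequality absorbs into $\e\int_M|\n u|^2+C_{\e,\d}$. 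Without this (or an equivalent device), the final absorption step in your proposal does not go through.
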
  

\begin{proof}
We may assume $\fint_{\O_2} u=0$ and we decompose $u$ so that $u=v+w$, with $\fint_{\O_2} v = \fint_{\O_2} w = 0$ and $v\in L^{\infty}(\O_2)$. Such decomposition will be suitably chosen later on. Let $0\leq\chi \leq 1$ be a cut-off function such that
$$
	{\chi}_{|\O_1} \equiv 1, \quad {\chi}_{|M\setminus (B_{\d/2}(\O_1))}\equiv 0, \quad |\n \chi| \leq C_{\d}.
$$
Then we have
$$
\log \int_{\O_1} e^u \,dV_g  \leq \log \int_{M} e^{\chi w} \,dV_g + \|v\|_{L^{\infty}(\O_1)}, 
$$
and similarly for $-a u$. Using the latter estimate and the Moser-Trudinger inequality \eqref{m-t} for $\chi w$ we obtain
\begin{align} \label{mt1}
\begin{split}
	8\pi \log \int_{\O_1} e^u\,dV_g + \frac{8\pi}{a^2} \log \int_{\O_1} e^{-au}\,dV_g  \leq &~ \frac 12 \int_M |\n(\chi w)|^2 \,dV_g+ C\|v\|_{L^{\infty}(\O_1)}\\
	  & + 8\pi \int_M \chi w \,dV_g  - \frac{8\pi}{a^2}\int_M a\chi w \,dV_g + C.
\end{split}
\end{align}
By the Poincar\'e's and Young's inequalities we get
\begin{equation} \label{mt2}
 \int_M \chi w \,dV_g  \leq \e \int_{\O_2} |\n w|^2 \,dV_g + \e \int_{\O_2}  w^2 \,dV_g+C_{\e}.
\end{equation}
Moreover, by Young's inequality it holds
\begin{align}
	\int_M |\n(\chi w)|^2 \,dV_g \leq (1+\e)\int_{\O_2}  |\n w|^2 \,dV_g + C_{\e,\ov\d}\int_{\O_2} w^2 \,dV_g. \label{mt3}
\end{align}
By combining \eqref{mt1}, \eqref{mt2}, \eqref{mt3}  we end up with
\begin{align}
8\pi \log \int_{\O_1} e^u\,dV_g + \frac{8\pi}{a^2} \log \int_{\O_1} e^{-au}\,dV_g  \leq  \frac{1+\e}{2}\int_{\O_2}  |\n w|^2 \,dV_g + C\|v\|_{L^{\infty}(\O_1)} + C\int_{\O_2} w^2 \,dV_g 
\label{mt-f3}
\end{align}
Reasoning as in Proposition 2.3 in \cite{mal-ru} we can then choose $v,w$ by decomposing $u$ with respect to a basis of eigenfunctions of $-\D$ in $H^1(\O_2)$ with Neumann boundary conditions to estimate the left terms in \eqref{mt-f3}. 
\end{proof}

\medskip

By the latter result we derive an improved inequality whenever the functions $e^u, e^{-au}$ are spread over the surface: indeed, it is sufficient to apply the localized inequality of Proposition \ref{mt-local} around each region which contains a portion of the \emph{volume} of $e^u, e^{-au}$, see \cite{bjmr, je-ya}.
\begin{pro}\label{mt-impr}
Let $\dis{\d>0}$, $\dis{\th>0}$, $\dis{k,l\in\N}$ and
$\dis{\{\O_{1,i},\O
_{2,j}\}_{i\in\{1,\dots,l\},j\in\{1,\dots,k\}}\subset M}$ be such that
$$
d(\O_{1,i},\O_{1,i'})\ge\d, \quad d(\O_{2,j},\O_{2,j'}) \ge\d,\qquad\forall\;i, \ i'\in\{1,\dots,l\}\hbox{ with }i\ne i' \;, \forall\;j,\ j' \in\{1,\dots,k\}\hbox{ with }j\ne j'.
$$ 
Then, for any $\dis{\e>0}$ there exists $\dis{C=C\left(\e,\d,\th,k,l,M\right)}$ such that if $u\in H^1(M)$ satisfies
\begin{align*}
\int_{\O_{1,i}} e^{u}\,dV_g\ge\th\int_M e^{u}\,dV_g,~\forall i\in\{1,\dots,l\},
\qquad \int_{\O_{2,j}} e^{-au}\,dV_g\ge\th\int_M e^{-au}\,dV_g,~\forall j\in\{1,\dots,k\},
\end{align*}
it follows that
$$8\pi l\log\int_M e^{u-\ov{u}}\,dV_g+\frac{8\pi}{a^2}k\log\int_M e^{-a(u-\ov u)}\,dV_g\leq \frac{1+\e}{2}\int_M |\n u|^2\,dV_g+C. $$
\end{pro}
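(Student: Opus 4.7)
The plan is to apply the local inequality of Proposition~\ref{mt-local} around each of the $l+k$ regions of concentration, convert every local log-integral to a global one by the concentration hypothesis, and then combine the resulting local inequalities so as to recover the coefficient $\frac{1+\varepsilon}{2}$ on the gradient energy.

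First I would fix enlargements $\tilde\Omega_{1,i}\supset\Omega_{1,i}$ and $\tilde\Omega_{2,j}\supset\Omega_{2,j}$ of thickness $\delta/4$, so that $\{\tilde\Omega_{1,i}\}_i$ and $\{\tilde\Omega_{2,j}\}_j$ are each pairwise disjoint by the within-family separation assumption, and then apply Proposition~\ref{mt-local} on each pair $(\Omega_{1,i},\tilde\Omega_{1,i})$ and $(\Omega_{2,j},\tilde\Omega_{2,j})$. On the left-hand side of each local inequality, the concentration hypothesis on the appropriate term (either $\int_{\Omega_{1,i}} e^u\ge\theta\int_M e^u$ or $\int_{\Omega_{2,j}}e^{-au}\ge\theta\int_M e^{-au}$) produces the correct multiplicity $l$ or $k$ once one sums; the remaining cross-terms $\log\int_{\Omega_{1,i}} e^{-au}$ and $\log\int_{\Omega_{2,j}}e^u$ are bounded below by Jensen's inequality, and the Poincar\'e--Young estimate $|\fint_{\tilde\Omega}u-\bar u|\le\varepsilon' \int_M|\nabla u|^2+C_{\varepsilon'}$ lets us replace each local mean $\fint_{\tilde\Omega_{1,i}}u,\,\fint_{\tilde\Omega_{2,j}}u$ by the global mean $\bar u$ at a controlled cost.

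The main obstacle is the right-hand side. A direct sum produces $\tfrac12\sum_i\int_{\tilde\Omega_{1,i}}|\nabla u|^2 + \tfrac12\sum_j\int_{\tilde\Omega_{2,j}}|\nabla u|^2$, and since the two families of enlargements may overlap a naive bound only gives $\int_M|\nabla u|^2$ (coefficient $1$), not $\frac{1+\varepsilon}{2}$. To recover the sharp coefficient I would group the $l+k$ regions into clusters of mutually close regions (at distance $<\delta/2$), and in each cluster that contains members of both families apply Proposition~\ref{mt-local} only \emph{once} on a common enlargement of the cluster, thereby extracting both an $8\pi\log\int e^u$-contribution and a $\tfrac{8\pi}{a^2}\log\int e^{-au}$-contribution at the cost of a single gradient energy. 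Clusters made of members of only one family are treated by the one-sided version (using Jensen to discard the other log-integral). Since the cluster enlargements are pairwise disjoint by construction, their total gradient cost is bounded by $\tfrac12\int_M|\nabla u|^2$, and choosing the $\varepsilon'$ in Proposition~\ref{mt-local} small relative to $\varepsilon/(k+l)$ absorbs the Poincar\'e and Jensen errors, yielding the stated bound.
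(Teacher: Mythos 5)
Your overall skeleton --- apply the two-sided local inequality of Proposition \ref{mt-local} once on each ``mixed'' region of concentration, use the one-sided version (discarding the other log-term by Jensen) on the pure ones, replace local averages by $\ov u$ via Poincar\'e--Young, and sum over disjoint enlargements --- is exactly the mechanism behind the paper's own (one-sentence, cited) proof, which invokes Proposition \ref{mt-local} around each region of concentrated volume following \cite{bjmr, je-ya}. However, your clustering step has a genuine gap: the hypotheses allow the $\Omega_{1,i}$'s and $\Omega_{2,j}$'s to be arbitrary sets, with no diameter control and no separation between the two families, which may even intersect each other. Take $\Omega_{2,1}$ a long, thin set intersecting (or passing arbitrarily close to) both $\Omega_{1,1}$ and $\Omega_{1,2}$, which are at mutual distance $\delta$. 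Then no grouping of the original sets works: if all three go into one cluster you extract a single factor $8\pi$ instead of two, so the within-family multiplicity $l$ is lost; if instead you form the clusters $\{\Omega_{1,1},\Omega_{2,1}\}$ and $\{\Omega_{1,2}\}$, their enlargements necessarily overlap along $\Omega_{2,1}$, so the claim ``the cluster enlargements are pairwise disjoint by construction'' fails and the summed gradient cost can approach $\int_M|\nabla u|^2$ rather than $\frac12\int_M|\nabla u|^2$. This is not a pathological case to be dismissed: cross-family proximity and overlap is precisely the situation the proposition is designed to cover.

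The missing ingredient is a preliminary reduction from arbitrary sets to small balls, as in the cited proofs. Cover $M$ by $N=N(\delta,M)$ metric balls of radius $\delta/100$; for each $i$ select one such ball $U_i$ meeting $\Omega_{1,i}$ with $\int_{U_i}e^u\,dV_g\ge\frac{\theta}{N}\int_M e^u\,dV_g$, and similarly balls $V_j$ for $e^{-au}$. The centers of the $U_i$'s are then at mutual distance at least $\delta-\delta/50$ (likewise for the $V_j$'s), so each $U_i$ can be $\delta/4$-close to at most one $V_j$: the closeness relation is a partial matching, every cluster contains at most one ball of each family, and distinct clusters are uniformly separated, so their $\delta/16$-enlargements are pairwise disjoint. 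With this selection step (which only costs replacing $\theta$ by $\theta/N$ in the constants) your argument goes through and yields the stated coefficients $8\pi l$ and $\frac{8\pi}{a^2}k$; without it, the construction as written either undercounts the multiplicities or loses the coefficient $\frac{1+\e}{2}$ on the Dirichlet energy.
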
 

\medskip

It follows that the more the functions $e^u, e^{-au}$ are spread the better bounds we have on $J_\rho$. On the other way round, if we are very low in the energy then $e^u, e^{-au}$ should be concentrated around some points. More precisely, in the semi-coercive case at least one of the two components have to be concentrated as follows (see \cite{bjmr,zhou}). Recall the definition of $M_k$, $\dkr$ in \eqref{M_k}, \eqref{dist}, respectively.
\begin{pro}\label{p:map}
Suppose $\rho_1<8\pi, \,\rho_2 \in \left(\dfrac{8\pi}{a^2}k, \dfrac{8\pi}{a^2}(k+1) \right)$, $k\in\N$. Then, for any
$\displaystyle{\e>0}$ there exists $\dis{L>0}$ such that if
$\dis{u\in J_\rh^{-L}}$ then
$$\dkr\left(\frac{\
h_2e^{-au}}{\int_M
h_2e^{-au}\,dV_g},M_k\right)<\e.
$$
Moreover, for $L$ sufficiently large there exists a continuous retraction
$$
	\Psi: J_\rho^{-L} \to M_k.
$$
\end{pro}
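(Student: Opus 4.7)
The plan is to establish the concentration statement by contradiction using a one-sided application of the improved Moser--Trudinger inequality in Proposition \ref{mt-impr}, and then to construct the continuous retraction $\Psi$ via the classical barycenter technique adapted from the Liouville setting.

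For the concentration claim, I would suppose for contradiction that there exist $\e_0 > 0$ and a sequence $\{u_n\} \subset H^1(M)$ with $J_\rho(u_n) \to -\infty$ satisfying $\dkr\bigl(h_2\,e^{-au_n}/\int_M h_2\,e^{-au_n}\,dV_g,\,M_k\bigr) \geq \e_0$. A standard covering lemma (see e.g.\ \cite{mal-ru,bjmr}) then produces $\theta,\delta > 0$, depending only on $\e_0$ and $k$, together with $k+1$ points $p_1^n,\dots,p_{k+1}^n \in M$ with pairwise distance at least $4\delta$ such that each ball $B_\delta(p_j^n)$ carries at least a $\theta$-fraction of the total mass of $h_2 e^{-au_n}$. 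Applying Proposition \ref{mt-impr} with these $k+1$ balls on the $-au$ side and the trivial choice $l=1$, $\Omega_{1,1} = M$ on the $u$ side, I would obtain for arbitrary $\e > 0$
\[
8\pi \log \int_M e^{u_n-\ov{u_n}}\,dV_g + \frac{8\pi(k+1)}{a^2}\log \int_M e^{-a(u_n-\ov{u_n})}\,dV_g \leq \frac{1+\e}{2}\int_M |\n u_n|^2\,dV_g + C.
\]

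Both logarithmic terms are non-negative by Jensen's inequality, and by the standing hypotheses $\rho_1 < 8\pi$ and $\rho_2 < 8\pi(k+1)/a^2$ one can pick $\tau < 1$ with $\rho_1 \leq 8\pi\tau$ and $\rho_2 \leq 8\pi(k+1)\tau/a^2$. Choosing $\e$ so small that $\tau(1+\e) < 1$ and absorbing the bounded weights $h_i$ together with the zero-mean terms, I would conclude
\[
J_\rho(u_n) \geq \frac{1 - \tau(1+\e)}{2}\int_M |\nabla u_n|^2 \,dV_g - C \geq -C',
\]
which contradicts $J_\rho(u_n) \to -\infty$ and establishes the first part.

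For the retraction $\Psi$, once $\dkr$-closeness of $h_2 e^{-au}/\int_M h_2 e^{-au}$ to $M_k$ is in hand on the sublevel $J_\rho^{-L}$, the construction is by now classical (see e.g.\ \cite{bjmr,jev,zhou}): I would compose the map $u \mapsto h_2 e^{-au}/\int_M h_2 e^{-au}$ with a continuous projection from a $\dkr$-neighborhood of $M_k$ in $\mathcal{M}(M)$ onto $M_k$ itself. This projection is built by a barycenter-type selection --- partitioning $M$ into small regions, extracting the $k$ heaviest centers of the measure, and forming a convex combination of Dirac masses which is $\dkr$-continuous. The main obstacle is not the contradiction step (immediate once the improved inequality is available), but rather the continuity verification for the barycenter projection on a neighborhood of the singular stratification of $M_k$; however, this machinery transfers verbatim from the standard Liouville and sinh-Gordon settings, so no new difficulty arises here.
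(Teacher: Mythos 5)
Your proof is correct and is exactly the argument the paper intends. The paper itself does not spell out a proof for this proposition but simply points to \cite{bjmr,zhou}, and what you wrote — the covering lemma producing $k+1$ well-separated regions for $h_2 e^{-au}$, Proposition~\ref{mt-impr} applied with $l=1$, $\Omega_{1,1}=M$ and $k+1$ sets on the $-au$ side, the Jensen nonnegativity of the log terms to absorb the smaller parameters $\rho_1<8\pi$, $\rho_2<\frac{8\pi}{a^2}(k+1)$ into a coercivity bound, and the barycenter projection for the retraction — is precisely the standard route taken in those references. The only small step left implicit is passing from concentration of $h_2 e^{-au_n}$ to concentration of $e^{-au_n}$ (to match the hypotheses of Proposition~\ref{mt-impr}), but since $h_2$ is bounded above and below this only shrinks $\theta$ by a fixed factor and is routine.
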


\medskip

On the other hand, it is possible to construct a reverse map $\Phi: M_k \to J_\rho^{-L}$. Indeed, letting $\s := \sum_{i=1}^k t_i \d_{x_i} \in M_k$, for large  $\l>0$ we set
\begin{equation} \label{bubble}
    \var_{\l , \s} (x) =  - \frac 1a \log \, \sum_{i=1}^{k} t_i \left( \frac{1}{1 + \l^2 d(x,x_i)^2} \right)^2,
\end{equation}
and define $\Phi=\Phi_\l: M_k \to H^1(M)$ by $\Phi_\l(\s) = \var_{\l , \s}$. Then, we have the following property.
\begin{pro} \label{p:test}
Let $\var_{\l , \s}$ be defined in \eqref{bubble}. Suppose $\rho_1<8\pi, \,\rho_2 \in \left(\dfrac{8\pi}{a^2}k, \dfrac{8\pi}{a^2}(k+1) \right)$, $k\in\N$. Then, it holds
\begin{equation} \label{claim}
  J_{\rho}(\var_{\l , \s}) \to - \infty \quad \hbox{ as  } \l \to + \infty,
  \qquad \quad \hbox{ uniformly in }  \s \in M_k.
\end{equation}
Letting $\Phi$ be defined as above, it follows that for any $L>0$ there exists $\l>0$ large such that
$$
	\Phi: M_k \to J_\rho^{-L}.
$$
\end{pro}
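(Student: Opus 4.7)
The plan is to expand
\begin{equation*}
J_\rho(\varphi_{\l,\s}) = \tfrac{1}{2}\int_M|\n\var|^2\,dV_g - \rho_1\Bigl(\log\int_M h_1 e^{\var}\,dV_g - \int_M\var\,dV_g\Bigr) - \rho_2\Bigl(\log\int_M h_2 e^{-a\var}\,dV_g + a\int_M\var\,dV_g\Bigr),
\end{equation*}
and, writing $F_\s(x)=\sum_i t_i(1+\l^2 d(x,x_i)^2)^{-2}$ so that $e^{-a\var_{\l,\s}}=F_\s$, to show that each piece admits a one-sided estimate uniform in $\s\in M_k$ producing a strictly negative net $\log\l$-coefficient precisely when $\rho_2>8\pi k/a^2$.

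For the Dirichlet term I will compute $\n\var=-a^{-1}\n F_\s/F_\s$. In a small ball around each atom $x_j$ with $t_j>0$ the $j$-th summand dominates $F_\s$, so $\var$ coincides with the standard bubble profile $\frac{2}{a}\log(1+\l^2 d(x,x_j)^2)$ modulo a bounded additive constant; the classical Liouville-bubble calculation then contributes $\frac{32\pi}{a^2}\log\l+O(1)$ per active atom, while the far field yields $O(1)$. This gives the uniform upper bound $\frac{1}{2}\int_M|\n\var_{\l,\s}|^2\,dV_g\le\frac{16\pi k}{a^2}\log\l+C$, with a strictly smaller coefficient in degenerate configurations where some $t_j$ vanish or some $x_i$ collide. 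The $\rho_1$-piece then collapses to $O(1)$ by a single use of Jensen's inequality: since $|M|=1$ one has $\log\int_M h_1 e^{\var}\,dV_g\ge\int_M\log h_1\,dV_g+\int_M\var\,dV_g$, whence $-\rho_1\bigl(\log\int h_1 e^{\var}-\int\var\bigr)\le -\rho_1\int\log h_1\,dV_g=O(1)$ uniformly in $\s$ and $\l$.

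The $\rho_2$-piece carries the negative leading term. On the one hand, $\int_M h_2 F_\s\,dV_g\ge(\min h_2)\sum_i t_i\int_M(1+\l^2 d_i^2)^{-2}\,dV_g=(\min h_2)\pi\l^{-2}(1+o(1))$ uniformly, so $\log\int_M h_2 e^{-a\var}\,dV_g\ge -2\log\l-O(1)$. On the other hand, the convex-combination bound $F_\s\le(1+\l^2 d(x,S_\s)^2)^{-2}$, with $S_\s=\mathrm{supp}\,\s$, yields $a\int\var=-\int\log F_\s\ge 2\int_M\log(1+\l^2 d(x,S_\s)^2)\,dV_g\ge 4\log\l-O(1)$; the last inequality uses only the elementary bound $\int_M\log d(x,S)\,dV_g\ge -Ck$ valid uniformly over $k$-point subsets $S\subset M$ (Voronoi decomposition combined with integrability of $\log|y|$ in two dimensions). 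Combining, $\log\int h_2 e^{-a\var}+a\int\var\ge 2\log\l-O(1)$, so the whole $\rho_2$-piece is bounded above by $-2\rho_2\log\l+O(1)$.

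Adding the three contributions gives
\begin{equation*}
J_\rho(\var_{\l,\s})\le\Bigl(\tfrac{16\pi k}{a^2}-2\rho_2\Bigr)\log\l+O(1),
\end{equation*}
with constants independent of $\s\in M_k$. The hypothesis $\rho_2>8\pi k/a^2$ makes the bracket strictly negative, so $J_\rho(\var_{\l,\s})\to-\infty$ uniformly in $\s$, which is \eqref{claim}; the statement on $\Phi_\l:M_k\to J_\rho^{-L}$ then follows immediately by choosing $\l$ large enough that the right-hand side is $\le -L$. I expect the main nuisance to be the uniformity in $\s$: the sharp per-bubble asymptotics can degrade when some $t_j\to 0$ or when points collide, and it is essential to phrase every estimate as a one-sided inequality whose constants depend only on $k$, on $\min h_i$ and $\|h_i\|_{C^0}$, and on the geometry of $(M,g)$, rather than as a sharp two-sided expansion in each variable.
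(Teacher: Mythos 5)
Your argument is correct and follows essentially the same route as the paper: bound the Dirichlet energy by $\tfrac{16\pi k}{a^2}\log\l + O(1)$, discard the $\rho_1$-term via Jensen, and extract a $-2\rho_2\log\l$ contribution from the $\rho_2$-term, yielding the coefficient $\tfrac{16\pi k}{a^2}-2\rho_2<0$. The only notable difference is that you set up each estimate as a genuinely one-sided bound valid uniformly over all of $M_k$ (including collapsed or degenerate $\s$), using $F_\s\leq(1+\l^2 d(x,S_\s)^2)^{-2}$ and a Voronoi argument for $\int_M\log d(x,S_\s)\,dV_g\geq -Ck$; the paper instead gives sharp asymptotics but, for the term $\int_M a\var_{\l,\s}\,dV_g$, only spells out the computation for $k=1$ ``for simplicity''. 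Your formulation is thus slightly more careful about uniformity, but the core estimates and conclusion coincide.
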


\begin{proof}
We have the estimates
\begin{equation} \label{gr1}
    |\n \var_{\l , \s}(x)| \leq C \l, \qquad \mbox{for every $x\in M$},
\end{equation}
where $C$ is a constant independent of $\l$, $\s \in M_k$, and
\begin{equation} \label{gr2}
    |\n \var_{\l , \s}(x)| \leq \frac 1a \frac{4}{d_{min}(x)}, \qquad \mbox{for every $x\in M,$}
\end{equation}
where $\dis{d_{min}(x) = \min_{i=1,\dots,k} d(x,x_i)}$.

Indeed, it holds
$$
    \n \var_{\l , \s}(x) =  \frac 2a \l^2 \frac{\sum_{i=1}^k t_i \bigr(1 + \l^2 d^2(x,x_i)\bigr)^{-3} \n \bigr(d^2(x,x_i)\bigr)}{\sum_{j=1}^k t_j \bigr(1 + \l^2 d^2(x,x_j)\bigr)^{-2}}.
$$
By the fact that $\left|\n \bigr(d^2(x,x_i)\bigr)\right| \leq 2 d(x,x_i)$ and
$$
    \frac{\l^2 d(x,x_i)}{1 + \l^2 d^2(x,x_i)} \leq C \l, \qquad i = 1, \dots, k,
$$
with $C$ a fixed constant, we get (\ref{gr1}). Next, we have
\begin{eqnarray*}
  |\n \var_{\l , \s}(x)| & \leq & \frac 4a \l^2 \frac{\sum_{i=1}^k t_i \bigr(1 + \l^2 d^2(x,x_i)\bigr)^{-3} d(x,x_i)}{\sum_{j=1}^k t_j \bigr(1 + \l^2 d^2(x,x_j)\bigr)^{-2}} \leq \frac 4a \l^2 \frac{\sum_{i=1}^k t_i \bigr(1 + \l^2 d^2(x,x_i)\bigr)^{-2} \frac{d(x,x_i)}{\l^2 d^2(x,x_i)}}{\sum_{j=1}^k t_j \bigr(1 + \l^2 d^2(x,x_j)\bigr)^{-2}} \\
   & \leq & \frac 4a \frac{\sum_{i=1}^k t_i \bigr(1 + \l^2 d^2(x,x_i)\bigr)^{-2} \frac{1}{d_{\,1,min}(x)}}{\sum_{j=1}^k t_j \bigr(1 + \l^2 d^2(x,x_j)\bigr)^{-2}} = \frac 1a \frac{4}{d_{\,1,min}(x)},
\end{eqnarray*}
which gives (\ref{gr2}).

From (\ref{gr1}) we get
\begin{align*}
    \frac{1}{2} \int_{M} |\n \var_{\l , \s}(x)|^2 \,dV_g \leq \frac{1}{2}\int_{M \setminus \bigcup_i B_{\frac{1}{\l}}(x_i)} |\n \var_{\l , \s}(x)|^2 \,dV_g + C. 
\end{align*}
Letting
$$
    A_i = \left\{ x \in M : d(x,x_i) = \min_{j=1,\dots,k} d(x,x_j) \right\},
$$
by (\ref{gr2}) we derive
\begin{align*}
    \frac 12 \int_{M \setminus \bigcup_i B_{\frac{1}{\l}}(x_i)} |\n \var_{\l , \s}(x)|^2 \,dV_g
     \leq  \frac {8}{a^2} \sum_{i=1}^k \int_{A_i \setminus B_{\frac{1}{\l}}(x_i)} \frac{1}{d_{min}^2(x)} \,dV_g + C  \leq \left(\frac{16\pi}{a^2} k+o_\l(1)\right) \log \l + C.
\end{align*}
We end up with
\begin{equation}\label{grad}
\frac{1}{2} \int_{M} |\n \var_{\l , \s}(x)|^2 \,dV_g \leq \left(\frac{16\pi}{a^2}k+o_\l(1)\right) \log \l + C.
\end{equation}

We consider now the nonlinear term $e^{-a \var_{\l , \s}}$. To deduce the leading term it is enough to consider
$$
\int_M \frac{1}{\bigr( 1 + \l^2 d(x,\ov{x})^2 \bigr)^2} \,dV_g,
$$
for some fixed $\ov x\in M$. By a change of variables one readily has
$$
\int_M \frac{1}{\bigr( 1 + \l^2 d(x,\ov{x})^2 \bigr)^2} \,dV_g= \l^{-2}(1 + O(1)),
$$
which gives 
\begin{equation} \label{exp1}
\log \int_M  e^{-a\var_{\l , \s}} \,dV_g =  - 2 \log \l + O(1).
\end{equation}
We are left with $\int_M a\,\var_{\l , \s}\,dV_g$. For simplicity we suppose $k=1$. We have
$$
	{a \,\var_{\l , \s}}(x) = 4 \log \bigr( \max\{ 1,\l d(x, x_1) \} \bigr) + O(1), \qquad x_1 \in M.
$$
Therefore
\begin{align*}
	\int_M {a\,\var_{\l , \s}}\, dV_g &= 4 \int_{M \setminus B_{\frac{1}{\l}}(x_1)} \log \bigr(\l d(x, x_1)\bigr) \,dV_g + 4 \int_{B_{\frac{1}{\l}}(x_1)}\, dV_g + O(1) \\
	&= 4 \log\l \left|M \setminus B_{\frac{1}{\l}}(x_1)\right| +4 \int_{M \setminus B_{\frac{1}{\l}}(x_1)} \log (d(x, x_1)) \,dV_g + O(1).
\end{align*}
Recall that $|M|=1$. It follows
\begin{equation} \label{aver}
	\int_M {a\,\var_{\l , \s}} \, dV_g = \bigr(4+o_\l(1)\bigr) \log \l + O(1).
\end{equation}
Finally, using first the Jensen's inequality involving the part $e^{(\var_{\l , \s}-\ov \var_{\l , \s})}$ and then \eqref{grad}, \eqref{exp1}, \eqref{aver} we deduce
\begin{align*}
	J_\rho(\var_{\l , \s}) =~&\frac{1}{2}\int_M |\nabla \var_{\l , \s}|^2 \,dV_g - \rho_1  \left( \log\int_M h_1  e^{\var_{\l , \s}} \,dV_g  - \int_M \var_{\l , \s} \,dV_g \right)  \\
	&  -  {\rho_2}  \left( \log\int_M h_2  e^{-a \var_{\l , \s}} \,dV_g  + \int_M  a\, \var_{\l , \s} \,dV_g \right) \\
			\leq~&\frac{1}{2}\int_M |\nabla \var_{\l , \s}|^2 \,dV_g  -  {\rho_2}  \left( \log\int_M h_2  e^{-a \var_{\l , \s}} \,dV_g  + \int_M  a\, \var_{\l , \s} \,dV_g \right)\\
			\leq~&\left(\frac{16\pi}{a^2}k-2\rho_2+o_\l(1)\right) \log\l + O(1).
\end{align*}
By assumption $\rho_2>\dfrac{8\pi}{a^2}k$ and hence we get the desired property.
\end{proof}

\medskip

We prove now the main result of this section.

\begin{proof}[Proof of Theorem \ref{th:ex1}]
Let $\Psi:J_\rho^{-L}\to M_k$ and $\Phi:M_k\to J_\rho^{-L}$ be the maps defined in Proposition \ref{p:map} and before Proposition \ref{p:test}, respectively. The existence of solutions to \eqref{eq} will follow by showing that $\Psi\circ\Phi\cong Id_{M_k}$ (homotopic equivalence).

Indeed, it is well-known that
$$
\frac{h_2 e^{-a\var_{\l , \s}}}{\int_M h_2 e^{-a \var_{\l , \s}}\,dV_g} \rightharpoonup \s, \qquad \mbox{as } \l\to+\infty,
$$
in the sense of measures, see  \cite{bjmr,mal, zhou}. We have then just to observe that $\Psi$ is a retraction and hence $\Psi\left(\frac{e^{\var_{\l , \s}}}{\int_M e^{\var_{\l , \s}}\,dV_g}\right)$ will tend strongly to the configuration $\s$. The homotopy equivalence is obtained by letting $\l\to+\infty$.

Passing to the associated maps between the homology groups we derive
$$
	H_q(M_k) \hookrightarrow H_q \left(J_{\rho}^{-L}\right).
$$
Clearly, $M_k$ has non-trivial homology, see for example \cite{mal}, and in turn $J_{\rho}^{-L}$ has non-trivial homology as well. If by contradiction \eqref{eq} has no solutions we can apply Proposition \ref{top-arg} to get that $J_\rho^{-L}$ is a deformation retract of $J_\rho^{L}$ for any $L>0$ (recall that $\rho=(\rho_1,\rho_2)\in K$). Since $J_\rho^{L}$ is contractible for some $L$, see Proposition \ref{high}, $J_\rho^{-L}$ has trivial homology. We are lead to a contradiction.
\end{proof}

\medskip

\section{Existence result in a supercritical case} \label{sec:exist2}

In this section we present the strategy to deal with the doubly supercritical case and to prove the existence result of Theorem \ref{th:ex2}. As in the previous section our goal is to show the low sublevels of $J_\rho$ carry some non-trivial topology. To describe the configurations in the low sublevels we will first introduce the topological join, see \eqref{join}. Secondly, we need a new improved version of the Moser-Trudinger inequality \eqref{m-t} which is scaling invariant, differently from the one in Proposition \ref{mt-impr}, which will impose new constrains on the projection of the low sublevel onto the topological join, see the argument in the sequel.

We start by pointing out the role of the topological join. Let $\rho_1 \in (8\pi, 16\pi)$ and $\rho_2 \in \left(\dfrac{8\pi}{a^2}, \dfrac{16\pi}{a^2} \right)$. By the improved inequality in Proposition \ref{mt-impr} one can readily see that if $J_\rho(u)$ is large negative, then either $e^u$ or $e^{-au}$ (or both) need to be concentrated around a point of the surface: more precisely, they are $\dkr$-close to some $\d_p$ (recall the definition of $\dkr$ in \eqref{dist}). Recalling that $M_1\cong M$, it is then natural to map them into the join $M *M$. We collect these arguments in the following result, see \cite{bjmr, je-ya}. 
\begin{pro}\label{p:altern}
Suppose $\rho_1 \in (8\pi, 16\pi)$ and $\rho_2 \in \left(\dfrac{8\pi}{a^2}, \dfrac{16\pi}{a^2} \right)$ with $a\in(0,1)$. Then, for any
$\displaystyle{\e>0}$, there exists $\dis{L>0}$ such that any
$\dis{u\in J_\rh^{-L}}$ verifies either
\begin{equation} \label{al}
d_1=\dkr\left(\frac{\
h_1e^{u}}{\int_M
h_1e^{u}\,dV_g},M\right)<\e\qquad\qquad\text{or}\qquad\qquad
d_2=\dkr\left(\frac{ h_2e^{-au}}{\int_M
h_2e^{-au}\,dV_g},M\right)<\e.
\end{equation}
Moreover, for $L$ sufficiently large there exists a continuous map
$$
	\ov \Psi: J_\rho^{-L} \to M *M.
$$
\end{pro}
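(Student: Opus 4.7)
The plan is to establish the dichotomy \eqref{al} by contradiction using the improved Moser-Trudinger inequality of Proposition \ref{mt-impr} (applied with $k=l=2$), and then to build $\ov\Psi$ from continuous barycenter maps on $M$ patched together via a continuous join-parameter encoding which of the two measures is more concentrated.

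For the alternative, assume by contradiction that $d_1(u)\geq\e$ and $d_2(u)\geq\e$ simultaneously. A standard spreading lemma for probability measures $\dkr$-far from every Dirac mass produces, for each $i\in\{1,2\}$ and for constants $\delta=\delta(\e)>0$ and $\theta=\theta(\e)>0$, two regions $\O_{i,1},\O_{i,2}\subset M$ with $d(\O_{i,1},\O_{i,2})\geq\delta$ each carrying at least a $\theta$-fraction of the respective measure $h_i e^{\epsilon_i u}/\int_M h_i e^{\epsilon_i u}\,dV_g$, with $\epsilon_1=1$, $\epsilon_2=-a$. Proposition \ref{mt-impr} then yields
\begin{equation*}
16\pi\log\int_M e^{u-\ov u}\,dV_g + \frac{16\pi}{a^2}\log\int_M e^{-a(u-\ov u)}\,dV_g \leq \frac{1+\e'}{2}\int_M|\n u|^2\,dV_g + C.
\end{equation*}
Since $|M|=1$ and the $h_i$ are two-sided bounded,
\begin{equation*}
J_\rho(u)\geq \tfrac12\int_M|\n u|^2\,dV_g - \rho_1\log\int_M e^{u-\ov u}\,dV_g - \rho_2\log\int_M e^{-a(u-\ov u)}\,dV_g - C,
\end{equation*}
and both log-integrals are non-negative by Jensen's inequality. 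Setting $\eta:=\max\bigl(\rho_1/(16\pi),\,\rho_2 a^2/(16\pi)\bigr)<1$ and choosing $\e'$ so small that $\eta(1+\e')<1$, combining the previous two displays gives $J_\rho(u)\geq \tfrac{1-\eta(1+\e')}{2}\int_M|\n u|^2\,dV_g - C\geq -C$, contradicting $u\in J_\rho^{-L}$ once $L$ is sufficiently large.

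For $\ov\Psi$, following the constructions of \cite{bjmr, je-ya}, on each open set $\mathcal U_i:=\{u\in J_\rho^{-L}:d_i(u)<2\e\}$ one builds a continuous barycenter map $\beta_i:\mathcal U_i\to M$ sending a nearly-$\delta$ measure to its center of mass. By the alternative (applied with a slightly smaller threshold) $J_\rho^{-L}=\mathcal U_1\cup\mathcal U_2$. Choose a continuous cut-off $s:J_\rho^{-L}\to[0,1]$ such that $s(u)=0$ whenever $d_1(u)\leq\e/4$ and $d_2(u)\geq\e/2$, $s(u)=1$ whenever $d_2(u)\leq\e/4$ and $d_1(u)\geq\e/2$, interpolated continuously in between, and set
\begin{equation*}
\ov\Psi(u):=\bigl(\beta_1(u),\beta_2(u),s(u)\bigr)\in M*M.
\end{equation*}
When $s(u)=0$ the condition $d_1(u)<\e$ ensures $\beta_1(u)$ is defined, while the equivalence $(a,b_1,0)\sim(a,b_2,0)$ makes the value of $\beta_2(u)$ irrelevant; the symmetric statement holds at $s(u)=1$, and both barycenters are defined on the intermediate regime. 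The principal technical delicacy is precisely the calibration of these thresholds so that the transition region lies inside $\mathcal U_1\cap\mathcal U_2$ and $\ov\Psi$ is continuous across the collapse at $s=0,1$; this parallels the sinh-Gordon construction in \cite{bjmr} with only minor adjustments to accommodate the factor $a$.
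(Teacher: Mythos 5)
Your proof of the dichotomy \eqref{al} is correct and takes exactly the paper's route: apply the spreading lemma to both normalized measures, invoke Proposition \ref{mt-impr} with $l=k=2$, and combine the resulting inequality with Jensen's bound $\log\int_M e^{u-\ov u}\,dV_g\geq 0$ (and the analogous one for $-au$) to get coercivity of $J_\rho$ once $\e'$ is chosen with $\eta(1+\e')<1$. The algebra with $\eta=\max\bigl(\rho_1/(16\pi),\rho_2 a^2/(16\pi)\bigr)$ is fine.

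The construction of $\ov\Psi$, however, has a genuine gap in the choice of thresholds. You need that whenever the join parameter $s(u)$ lies in the open interval $(0,1)$ (so that \emph{both} barycenters $\beta_1(u),\beta_2(u)$ enter the definition), both $d_1(u)$ and $d_2(u)$ are small enough for the barycenters to be defined. The alternative \eqref{al} only controls $\min(d_1,d_2)$, so you must arrange for the cutoff to force $\max(d_1,d_2)$ to be small on the transition region. Your conjunctive conditions (``$s=0$ iff $d_1\leq\e/4$ \emph{and} $d_2\geq\e/2$'', etc.) do not do this: e.g.\ $d_1=\e/2$, $d_2=10\e$ satisfies $d_1<\e$ (so $u$ can lie in $J_\rho^{-L}$), but neither forcing condition holds, so $s(u)$ may be in $(0,1)$ while $\beta_2(u)$ is undefined. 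The paper avoids this by taking $s=F\bigl(d_1/(d_1+d_2)\bigr)$ with $F$ a threshold function at $1/4$ and $3/4$: then $s\in(0,1)$ implies $d_2/3<d_1<3d_2$, which together with $\min(d_1,d_2)<\e_0$ yields $\max(d_1,d_2)<3\e_0$, so both projections are defined. You correctly flagged the calibration as the delicate point, but the specific thresholds you propose do not achieve it; the ratio-based cutoff is the essential fix.
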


\begin{proof}
We sketch the main steps for the reader's convenience. Let $\dis{u\in J_\rh^{-L}}$. By Proposition \ref{mt-impr} it is easy to show that \eqref{al} holds true. Let $\Pi:\{ \dkr(\cdot, M)<\e \}\to M$ be the projection. Set
\begin{equation} \label{s}
s(d_1,d_2)=F\left( \frac{d_1}{d_1+d_2} \right),
\end{equation}
where
$$
	F(x)=\left\{
			\begin{array}{ll}
			0 & \mbox{for } x\in [0,1/4], \\
			2x-\frac 12 & \mbox{for } x\in(1/4\,,3/4), \\
			1 & \mbox{for } x\in[3/4,1].
			\end{array}
	\right. 
$$
$\ov\Psi$ is then defined as
\begin{equation} \label{map}
	\ov\Psi (u) = \left( \Pi\left(\frac{\
h_1e^{u}}{\int_M
h_1e^{u}\,dV_g}\right), \Pi\left(\frac{\
h_2e^{-au}}{\int_M
h_2e^{-au}\,dV_g}\right), s \right).
\end{equation}
\end{proof}

\medskip

\subsection{Improved Moser-Trudinger inequality}

In this subsection we will introduce the ingredients to derive a new improved Moser-Trudinger inequality: the latter one will give some extra constrains to the map $\ov \Psi$ in Proposition \ref{p:altern}. Such inequality is based both on the point of concentration of a function and on the \emph{scale} of concentration. This concepts where introduced in \cite{mal-ru} for treating a Toda system and then in \cite{jev} for the sinh-Gordon case \eqref{eq:sinh} (see also \cite{jkm}). Differently from the latter references where the topological cone on $M$ is used, we will rephrase the argument in terms of the topological join $M*M$. Recall the definition of $\dkr$ in \eqref{dist} and that $M_1\cong M$. Let $\d>0$ and consider the set
$$
	A_\d=\left\{ f\in L^1(M)\,:\, f>0 \mbox{ a.e. and } \int_M f\,dV_g = 1 \right\}\cap\bigr\{\dkr(\cdot,M)<\d\bigr\}
$$
of normalized functions concentrated around a point of the surface. What we need is the following map introduced in \cite{mal-ru}.
\begin{pro}\label{conc}
Let $R>1$ be fixed. Then there exist \,\mbox{$\delta=\delta(R)\!>\!0$} and a continuous map:
\[ \psi : A_\d \rightarrow M\times(0,+\infty), \qquad  \psi(f)= (\beta, \sigma), \]
such that for any $f \in A_\d$ there exists
$p \in \Sigma$ such that
\begin{enumerate}

\item[(1)] $ d(p, \beta) \leq  C \sigma$ with $C = C(R,\d,M).$

\item[(2)] It holds: 
$$ 
\int_{B_\s(p)} f \, dV_g > \tau, \qquad  \int_{B_{R \sigma}(p)^c} f \, dV_g > \tau, 
$$
with $\tau>0$, $\t=\t(R,M)$.

\end{enumerate}
\end{pro}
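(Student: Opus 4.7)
The plan is to extract from $f$ a \emph{scale} $\s(f)$ and a \emph{location} $\b(f)$ that together encode the transition between the concentrated and dispersed parts of $f$. Set
$$T_f(x,r) := \int_{B_r(x)} f\,dV_g, \qquad g_f(r) := \max_{x\in M} T_f(x,r).$$
By dominated convergence and compactness of $M$, $g_f$ is continuous in $(f,r)$ and monotone non-decreasing in $r$, with $g_f(0)=0$ and $g_f(\mathrm{diam}(M))=1$.

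First I would fix $\t=\t(R,M)\in(0,1/(1+R))$ and set
$$r_*(f) := \inf\{r>0 : g_f(r)\ge 1-\t\}, \qquad \s(f):=r_*(f)/R.$$
For the \emph{outside} bound, the definition of $r_*$ yields $g_f(R\s)=1-\t$, so for every $q\in M$, $\int_{B_{R\s}(q)^c} f\,dV_g \ge \t$ (strict inequality is gained, if needed, by replacing $\s$ with $(1-\e)\s$ for a fixed small $\e$). For the \emph{inside} bound I would exploit Kantorovich--Rubinstein closeness: since $\dkr(f,\d_{q_0})<\d$ for some $q_0\in M$, testing the KR supremum against the $1$-Lipschitz function $x\mapsto \min(d(x,q_0),r)$ gives the Markov-type estimate $1-T_f(q_0,r)\le \d/r$. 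Applied at $r=r_*$ this forces $r_*\le \d/\t$, hence $\s\le \d/(R\t)$; applied at $r=\s$ it yields $g_f(\s)\ge T_f(q_0,\s)\ge 1-\d/\s \ge 1-R\t > \t$, since $\t<1/(1+R)$. Taking $p=p(f)\in M$ to realize the maximum in $g_f(\s)$ then gives $\int_{B_\s(p)} f\,dV_g > \t$.

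For the center $\b$, set $S_f:=\{x\in M : T_f(x,\s)>\t\}\ni p$. The same Markov estimate confines $S_f$ to a small ball around $q_0$: if $d(x,q_0)>K\s$ with $K$ large, then $B_\s(x)\cap B_{K\s/2}(q_0)=\emptyset$ and $T_f(x,\s)\le 2\d/(K\s)$, contradicting $T_f(x,\s)>\t$ once $K$ is large in terms of $R$ and $\t$. Hence $\mathrm{diam}(S_f)\le C\s$ for some $C=C(R,M,\t)$. I would then define $\b(f)$ by embedding $M\hookrightarrow \R^N$ via a smooth isometric embedding, forming the weighted Euclidean barycenter of $x$ with weight $\max(T_f(x,\s)-\t,0)$, and projecting back to $M$ by the nearest-point retraction---well defined once $S_f$ lies in a tubular neighborhood of $M$. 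Since $p$ and $\b$ both lie in the Euclidean convex hull of $S_f$, the bound $d(p,\b)\le C\s$ follows.

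The principal obstacles are twofold. First, calibrating $\t$ and $\d$ so that the inside and outside bounds hold at the \emph{same} scale $\s$: this is what forces $\t<1/(1+R)$ and $\d\ll \t^2/R$. Second, ensuring that $r_*(f)$ depends \emph{continuously}, not merely semi-continuously, on $f$, since the monotone $g_f$ may have plateaus at level $1-\t$. This can be handled by a brief regularization: replace $r_*$ by $\hat r(f):=\int_0^{\mathrm{diam}(M)} r\,d\m_f(r)$, where $\m_f$ is a probability measure on the $r$-axis concentrated near the level set $\{g_f=1-\t\}$ and depending continuously on $g_f$ (for example, obtained by mollifying $r\mapsto g_f(r)$ and using the implicit function theorem at the unique crossing of $1-\t$).
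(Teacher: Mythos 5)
Your central step does not go through, and with your calibration the conclusion is in fact false. From the Markov-type estimate you correctly obtain $r_*\le \d/\t$, hence the \emph{upper} bound $\s\le \d/(R\t)$; but the next chain, $T_f(q_0,\s)\ge 1-\d/\s\ge 1-R\t$, requires the \emph{lower} bound $\s\ge \d/(R\t)$, i.e. exactly the reverse inequality. No such lower bound is available: membership in $A_\d$ only constrains the absolute scale of spreading (it must be $\lesssim\d$), not the profile, and $A_\d$ contains functions concentrated at arbitrarily small scales. Concretely, take $f$ essentially uniform on a ball $B_\rho(q_0)$ with $\rho\ll\d$: then $f\in A_\d$, $g_f$ reaches $1-\t$ at $r_*\approx \rho\sqrt{1-\t}$, and every ball of radius $\s=r_*/R$ carries mass of order $(1-\t)/R^2$, which is below any $\t$ chosen near $1/(1+R)$ already for $R=2$. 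So the inside bound in (2) fails for your $\psi$, not merely its proof. The same defect infects your confinement of $S_f$: the bound $T_f(x,\s)\le 2\d/(K\s)$ is useless when $\s\ll\d$ (though the conclusion $\mathrm{diam}(S_f)\le C(R)\s$ can be rescued differently: any $\s$-ball of mass $>\t$ must intersect the $r_*$-ball of mass $\ge 1-\t$, giving $\mathrm{diam}(S_f)\le 2(R+1)\s$).

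The cure is the one the paper uses: the lower bound on the concentrated mass must come from a \emph{covering argument}, not from the Kantorovich--Rubinstein distance. Covering the ball of radius $r_*$ that realizes $g_f(r_*)=1-\t$ by $O(R^2)$ balls of radius $r_*/R$ forces $g_f(r_*/R)\ge c(M)(1-\t)/R^2$, so $\t=\t(R,M)$ must be taken of order $R^{-2}$; the smallness of $\d$ is then only needed, as in the paper, to ensure $\s$ is small so that $S_f$ sits in a tubular neighborhood and the projected barycenter $\b$ is well defined. (The paper implements this with a pointwise balance condition $\int_{B_{\sigma(x,f)}(x)}f=\int_{B_{R_0\sigma(x,f)}(x)^c}f$, takes $\s(f)$ as a minimum over $x$, and gets $\t$ precisely by such a covering; your global maximal-function/level-crossing definition of the scale would be an acceptable alternative once recalibrated.) Finally, your continuity worry is unnecessary: since $f>0$ a.e., $T_f(x,\cdot)$ and hence $g_f$ are strictly increasing and jointly continuous, so the crossing $r_*(f)$ is unique and depends continuously on $f$ without any mollification.
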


\begin{proof}
The argument is carried out as in \cite{mal-ru}. We sketch the main steps for the reader's convenience. We start by taking $R_0=3R$ and setting $ \sigma: M \times A_\d\rightarrow (0,+\infty)$ such that:
\begin{equation}\label{def-sigma} 
\int_{B_{\sigma(x,f)}(x)} f \, dV_g = \int_{B_{R_0\sigma(x,f)}(x)^c} f \, dV_g. 
\end{equation}
We point out that it holds
\begin{equation} \label{dist1} 
d(x,y) \leq R_0 \max \bigr\{ \sigma(x,f), \sigma(y,f)\} +\min \{ \sigma(x,f), \sigma(y,f) \bigr\}.
\end{equation}
We then define the mapping $ T: M \times A_\d \rightarrow \mathbb{R}$ by
$$
 T(x,f) = \int_{B_{\sigma(x,f)}(x)} f \,dV_g.
$$
It is possible to show that if $x_0 \in M$ is such that $T(x_0,f) = \max_{y \in M} T(y,f)$, then we have 
\begin{equation} \label{sigma}
\sigma(x_0,f) < 3\,\sigma(x, f), \qquad \forall x \in M, x\neq x_0.
\end{equation}
Exploiting the latter fact, by a covering argument one can prove that there exists a fixed $\tau > 0$ such that
\begin{equation} \label{tau}
  \max_{x \in M} T(x,f) > \tau  \qquad \forall f \in A_\d.
\end{equation}
We define now a continuous function $\sigma : A_\d \rightarrow \mathbb{R}$
$$ 
\sigma(f)= 3 \min\bigr\{ \sigma(x,f): \ x \in A_\d \bigr\}.
$$ 
Let $\tau$ be as in \eqref{tau} and let
\begin{equation} \label{defS}
  S(f) = \bigr\{ x \in M \; : \; T(x,f) > \tau,\ \sigma(x,f) <  \sigma(f) \bigr\},
\end{equation}
Observe that by \eqref{sigma}, \eqref{tau} the latter set is a nonempty open set for any $f\in A_\d$. Moreover, from \eqref{dist1} we have that
\begin{equation}\label{diamS} 
diam\bigr(S(f)\bigr) \leq (R_0+1)\sigma(f). 
\end{equation}
Consider now an embedding of $M$ into $\mathbb{R}^3$ and identify $M$ with its image through the latter embedding. We consider a sort of center of mass $m(f)\in \mathbb{R}^3$
$$
  m(f) = \frac{\displaystyle \int_M \bigr(T(x,f) - \tau\bigr)^+ \bigr( \sigma(f) - \sigma(x,f) \bigr)^+ x  \,dV_g}{\displaystyle \int_M \bigr(T(x,f) - \tau\bigr)^+ \bigr( \sigma(f) - \sigma(x,f) \bigr)^+ \,dV_g},
$$
where $x^+=\max\{x,0\}$. Observe that the integrands become nonzero only on the set $S(f)$. We point out that for a $f\in A_\d$ we have $\dkr(f,M)<\d$ and hence most of its volume is accumulated in a ball (depending on $\d$) centered in a point of the surface. Therefore, by definition there exists a $\wtilde \d=\wtilde \d(\d)$ such that  $\sigma(f) \leq \wtilde\delta$. For $\wtilde\d>0$ sufficiently small, whenever $\sigma(f) \leq \wtilde\delta$ \eqref{diamS} implies that $m(f)$ is close to $M$. Letting $P$ be a orthogonal projection from a $\wtilde\d$-neighborhood of $M$ onto the surface we define $\beta: A_\d \rightarrow M$ to be
$$
\beta(f)= P \circ m (f).
$$
Then the map $\psi(f)=\bigr(\beta(f), \sigma(f)\bigr)$ satisfies the desired properties: indeed, we have just to observe that by \eqref{diamS} we have $d(\b(f),S(f))\leq (R_0+1)\sigma(f)$ and that $\s(f)\leq 3\s(x,f)\leq 3\s(f)$ (recall that $R_0=3R$).
\end{proof}

\medskip

The idea is that the latter map $\psi(f)=(\b,\s)$ gives us the point of concentration of $f$ and its scale of concentration around it. The smaller is $\s$ the faster is the concentration of $f$. We state now the main result of this subsection, i.e. the improved Moser-Trudinger inequality: roughly speaking, whenever the two functions $e^u, e^{-au}$ concentrate both at the same point and at the same scale of concentration we derive an improved inequality.
\begin{pro} \label{mt-improved}
Let $\varepsilon>0$. Then, there exist $R=R(\varepsilon)>1$ and $\psi$ as given in Proposition \ref{conc}, such that for any $u \in H^1(M)$ such that
$$\psi \left( \frac{e^{u}}{\int_{M} e^{u} \,dV_g} \right
)= \psi \left( \frac{e^{-au}}{\int_{M} e^{-au} \,dV_g} \right
),
$$
there exists some $C=C(\varepsilon)$ such that, if $a \geq \dfrac 12$
\begin{equation} \label{imp-ineq}
16\pi \log \int_M e^{u-\ov{u}} \,dV_g +\frac{16\pi}{a^2}\log \int_M e^{-a(u-\ov{u})} \,dV_g \leq \frac{1+\e}{2}\int_M |\nabla u|^2 \,dV_g + C,
\end{equation}
and if $a<\dfrac 12$
\begin{equation} \label{imp-ineq2}
16\pi \log \int_M e^{u-\ov{u}} \,dV_g +\left(\frac{8\pi}{a^2}+\frac{16\pi}{a}\right)\log \int_M e^{-a(u-\ov{u})} \,dV_g \leq \frac{1+\e}{2}\int_M |\nabla u|^2 \,dV_g + C.
\end{equation}
\end{pro}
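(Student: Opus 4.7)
The plan is to exploit the hypothesis $\psi(f_1)=\psi(f_2)=(\b,\s)$ to localize both $e^u$ and $e^{-au}$ around a common point $p$ with a common concentration scale $\s$, and then to combine a scale-invariant (inner) Moser-Trudinger estimate on the ball $B_{R\s}(p)$ with a localized (outer) Moser-Trudinger estimate on $M\setminus B_{R\s}(p)$ through a mass-splitting argument that doubles the coefficients of the logarithmic terms.

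Fix $R=R(\e)>1$, to be chosen large. Applying Proposition \ref{conc} to $f_1=h_1e^u/\int_M h_1e^u\,dV_g$ and $f_2=h_2e^{-au}/\int_M h_2e^{-au}\,dV_g$, which share the same $\psi$-image by hypothesis, produces a common point $p\in M$ with $d(p,\b)\leq C\s$ and a threshold $\t=\t(R,M)>0$ such that $\int_{B_\s(p)} f_i\,dV_g\geq\t$ and $\int_{M\setminus B_{R\s}(p)} f_i\,dV_g\geq\t$ for $i=1,2$. Taking logarithms yields the \emph{doubling relation}
$$2\log\int_M e^{\pm au}\,dV_g\leq\log\int_{B_{R\s}(p)} e^{\pm au}\,dV_g+\log\int_{M\setminus B_{R\s}(p)} e^{\pm au}\,dV_g-2\log\t,$$
which is the mechanism promoting the standard $8\pi,\,8\pi/a^2$ coefficients of \eqref{m-t} to the $16\pi,\,16\pi/a^2$ coefficients in \eqref{imp-ineq}.

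On the inner ball I would introduce normal coordinates at $p$ and set $v(y)=u(\exp_p(\s y))$ on $B_R\subset\R^2$. By conformal invariance of the $2$-dimensional Dirichlet energy and by invariance of the averaged exponential under the scaling $y\mapsto\s y$, the joint sinh-Gordon-type Moser-Trudinger inequality on the Euclidean disc $B_R$ applied to $v$ translates back into the scale-invariant bound
$$8\pi\log\fint_{B_{R\s}(p)} e^{u-\ov u_{B_{R\s}(p)}}\,dV_g+\frac{8\pi}{a^2}\log\fint_{B_{R\s}(p)} e^{-a(u-\ov u_{B_{R\s}(p)})}\,dV_g\leq\frac12\int_{B_{R\s}(p)}|\n u|^2\,dV_g+C_R,$$
with $C_R$ independent of $\s$. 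On the outer region I would invoke Proposition \ref{mt-local} with $\O_1=M\setminus B_{R\s}(p)$ and $\O_2$ a thickening of $\O_1$, obtaining a companion bound whose Dirichlet term sits essentially on the outer region. Substituting both estimates into the doubling relation for $\pm au$ and absorbing the $\log|B_{R\s}|$, $\ov u_{B_{R\s}(p)}$ and $\log\t$ remainders into $\e\int_M|\n u|^2$ via Poincar\'e-type inequalities on annuli around $p$ then yields \eqref{imp-ineq} in the range $a\geq\tfrac12$.

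The case $a<\tfrac12$ uses the same scheme with one crucial modification: by Theorem \ref{th:mass}(2) the asymmetric single-bubble profile of masses $(8\pi,\,8\pi/a^2+16\pi/a)$ is admissible, so the inner Moser-Trudinger inequality on $B_R$ is tighter with the coefficient $8\pi/a^2+16\pi/a$ replacing $8\pi/a^2$ in front of $\log\fint e^{-av}$. The doubling relation is then applied only to the $+au$-component (producing $16\pi\log\int_M e^u$), while the $-au$-component retains its enlarged but single coefficient $8\pi/a^2+16\pi/a$, since the big inner bubble already carries its full mass; this gives \eqref{imp-ineq2}. The main obstacle is establishing a localized outer Moser-Trudinger estimate with Dirichlet term essentially supported on $M\setminus B_{R\s}(p)$ and with constants uniform in $\s$ — so that the inner and outer contributions sum to $\tfrac{1+\e}{2}\int_M|\n u|^2$ without double-counting — which forces a refinement of the cutoff argument of Proposition \ref{mt-local} avoiding the $1/(R\s)$ blow-up of $\|\n\chi\|_\infty$; a secondary subtlety is the derivation of the enlarged inner inequality for $a<\tfrac12$, which should follow by adapting the Aubin-type extremal construction to the asymmetric bubble profile.
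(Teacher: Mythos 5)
Your overall architecture for $a\geq\frac12$ (mass doubling from Proposition \ref{conc}, a scale-invariant inner estimate obtained by dilation, an outer estimate, and cancellation of the scale-dependent remainders) is the right skeleton and matches the paper's strategy, but the proposal has a genuine gap precisely at the point you flag as ``the main obstacle'': the outer estimate. Applying Proposition \ref{mt-local} with $\Omega_1=M\setminus B_{R\s}(p)$ forces a cutoff at scale $R\s$, whose constant blows up as $\s\to0$, and your fallback --- absorbing the $\log|B_{R\s}|$ and $\ov u_{B_{R\s}(p)}$ remainders ``into $\e\int_M|\n u|^2$ via Poincar\'e-type inequalities'' --- cannot work: these terms are of order $\log\frac1\s$ and $\ov u(\s)$ and are not controlled by $\e$ times the Dirichlet energy uniformly in $\s$; they must cancel \emph{exactly} against the corresponding terms of the inner estimate. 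The paper achieves this with Lemma \ref{mt-annulus}: a Kelvin transform of the annulus $A_p(s/2,2r)$ onto itself combined with a logarithmic correction $-\b\log|x-p|$, where $\b$ is chosen ($\b=8(1-\frac1a)$ for $a\geq\frac12$, $\b=-\frac4a$ for $a<\frac12$, within the admissible range $[-\frac4a,4]$ dictated by the Jacobian estimates \eqref{exp}--\eqref{exp2}) exactly so that the $\ov u(s)$ and $\log s$ terms of \eqref{ineq-ann}--\eqref{ineq-ann2} cancel those of \eqref{ineq-ball}--\eqref{ineq-ball2}; the local inequality is then applied at the \emph{fixed} scale $r$, so all constants are uniform in $\s$. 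Without this (or an equivalent device), your scheme does not close. There are also the residual issues, handled in the paper by dyadic decompositions and harmonic liftings, that the two concentration points $p_1,p_2$ given by Proposition \ref{conc} need not coincide (only $d(p_1,p_2)\leq C\s$) and that $u$ does not vanish on $\partial B_{2r}(p)$.

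The treatment of $a<\frac12$ is moreover incorrect in mechanism. You propose an inner inequality on the small ball with coefficient $\frac{8\pi}{a^2}+\frac{16\pi}{a}$ in front of $\log\fint e^{-av}$, justified by the admissible blow-up mass in Theorem \ref{th:mass}(2). First, a blow-up classification does not yield a functional inequality: the logic runs the other way (sharp constants are \emph{interpreted} via blow-up masses, but the inequality requires an independent proof). Second, such an inner inequality is false: the sharp local constant for the $e^{-au}$-term on a ball is $\frac{8\pi}{a^2}$, and testing with a single bubble $u=-\frac1a v_\l$ concentrated inside the ball violates any larger coefficient with only $\frac12\int|\n u|^2$ on the right. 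The paper does the opposite: for $a<\frac12$ it \emph{lowers} the ball coefficient to $\frac{16\pi}{a}<\frac{8\pi}{a^2}$ (inequality \eqref{ineq-ball2}, a legitimate weakening of Proposition \ref{mt-local}) and recovers the full $\frac{8\pi}{a^2}$ from the annulus via the Kelvin transform with $\b=-\frac4a$, so that the doubling applies to both components and the sum gives $16\pi$ and $\frac{8\pi}{a^2}+\frac{16\pi}{a}$; skipping the doubling on the $e^{-au}$ component, as you suggest, has no valid substitute estimate behind it.
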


\medskip

The latter result is mainly based on two local versions of the Moser-Trudinger inequality: one in small balls and the other one in annuli with small internal radius which we prove below. It is inspired by \cite{mal-ru} (see also \cite{jev}). In doing this the following lemma concerning  Poincar\'e-Wirtinger and trace inequalities will be used.
\begin{lem} \label{lem:media}
There exists $C>0$ such that for any $u\in H^1(M)$, $p\in M$ and $r>0$ it holds
$$
	\left| \fint_{B_r(p)} u\,dV_g - \fint_{\p B_r(p)} u\,dS_g \right| \leq C\left( \int_{B_r(p)} |\n u|^2\,dV_g \right)^{1/2}\,.
$$
Furthermore, for any $\d\in(0,1)$ there exists $C=C_\d$ such that for any $u\in H^1(M)$, $p\in M$ and $r>0$ one has
$$
	\left| \fint_{B_{\d r}(p)} u\,dV_g - \fint_{ B_r(p)} u\,dV_g \right| \leq C\left( \int_{B_r(p)} |\n u|^2\,dV_g \right)^{1/2}\,.
$$
\end{lem}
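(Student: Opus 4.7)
The key structural fact is that both inequalities are \emph{scale invariant}: in dimension two the Dirichlet energy $\int|\nabla u|^2\,dV_g$ is invariant under $x\mapsto rx$, and so are averages over balls/spheres. Thus the plan is to reduce to a fixed-size ball and apply standard Poincar\'e--Wirtinger and trace estimates there.

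First I would restrict to small radii. Since $M$ is compact there exists $r_0>0$ smaller than the injectivity radius of $(M,g)$, and for every $p\in M$ the exponential map $\exp_p:B_{r_0}^{\R^2}\to B_{r_0}(p)\subset M$ is a bi-Lipschitz diffeomorphism with constants uniform in $p$. Hence, for $r\leq r_0$, the Riemannian ball $B_r(p)$ can be replaced by the Euclidean ball $B_r\subset\R^2$ equipped with a metric comparable to the flat one, and all integrals in the statement are comparable (uniformly in $p,r$) to their Euclidean counterparts. For $r\geq r_0$ a covering/compactness argument gives the bound with a constant depending only on $(M,g)$, since both sides are dominated by $\|u\|_{H^1(M)}$ up to a fixed constant and averages over $B_r(p)$ with $r\geq r_0$ cannot differ by more than $C\|\nabla u\|_{L^2(M)}$ by the global Poincar\'e--Wirtinger inequality on $M$ (applied to $u-\bar u$).

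Next, for $r\leq r_0$ in the Euclidean model, I rescale: setting $\tilde u(y)=u(ry)$, one has
\[
\fint_{B_r} u = \fint_{B_1}\tilde u,\qquad \fint_{\p B_r} u = \fint_{\p B_1}\tilde u,\qquad \fint_{B_{\d r}} u = \fint_{B_\d}\tilde u,\qquad \int_{B_r}|\nabla u|^2\,dx = \int_{B_1}|\nabla\tilde u|^2\,dy.
\]
So it suffices to prove both inequalities on the unit ball $B_1\subset\R^2$. Writing $v=\tilde u-\fint_{B_1}\tilde u$, the classical Poincar\'e--Wirtinger inequality gives $\|v\|_{L^2(B_1)}\leq C\|\nabla\tilde u\|_{L^2(B_1)}$, and the standard trace theorem gives $\|v\|_{L^2(\p B_1)}\leq C\|v\|_{H^1(B_1)}\leq C\|\nabla\tilde u\|_{L^2(B_1)}$. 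Cauchy--Schwarz on the sphere then yields
\[
\Bigl|\fint_{\p B_1} v\Bigr|\leq \|v\|_{L^2(\p B_1)}\leq C\|\nabla\tilde u\|_{L^2(B_1)},
\]
which is the first inequality. For the second, Cauchy--Schwarz on $B_\d$ gives
\[
\Bigl|\fint_{B_\d} v\Bigr|\leq \frac{1}{|B_\d|^{1/2}}\|v\|_{L^2(B_1)}\leq \frac{C}{\d}\|\nabla\tilde u\|_{L^2(B_1)},
\]
which produces the $\d$-dependent constant $C_\d$.

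The only delicate point is keeping the constants uniform in $p\in M$ and in $r>0$. For small $r$ this is handled by the uniform bi-Lipschitz property of normal coordinates on a compact manifold; for large $r$ by the fact that $M$ is compact so only finitely many scales matter. I do not expect any substantial obstacle beyond carefully patching these two regimes. The proof is essentially a bookkeeping exercise combining (i) 2D scale invariance, (ii) uniform normal coordinates, and (iii) the Poincar\'e--Wirtinger/trace inequalities on $B_1\subset\R^2$.
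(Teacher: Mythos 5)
The paper states Lemma \ref{lem:media} without proof, merely describing it as ``a lemma concerning Poincar\'e--Wirtinger and trace inequalities,'' so there is no argument to compare against; your scaling--plus--Poincar\'e--Wirtinger--plus--trace route is exactly the intended one and the core computation on $B_1\subset\R^2$ is correct.

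Two points in the write-up need tightening. First, the transition from Riemannian to Euclidean quantities is stated too quickly: the assertion that ``all integrals in the statement are comparable to their Euclidean counterparts'' does not in itself control the left-hand sides, which are \emph{differences} of averages (if $A\approx A'$ and $B\approx B'$ multiplicatively it does not follow that $A-B\approx A'-B'$). The fix is easy but should be made explicit: both sides of each inequality are invariant under $u\mapsto u+c$, so one may normalize so that the Euclidean mean $\fint_{B_r}u\,dx$ vanishes; then the quantities to be bounded are single averages of $v=u-\fint_{B_r}u\,dx$, for which comparability of $dV_g$ with $dx$ (and of $dS_g$ with $dS$) in normal coordinates, together with $\|v\|_{L^2(B_r,dx)}\le Cr\|\nabla u\|_{L^2(B_r,dx)}$, does the job; alternatively one can show $\bigl|\fint_{B_r(p)}u\,dV_g-\fint_{B_r}u\,dx\bigr|\le C\|\nabla u\|_{L^2(B_r)}$ directly by the same Poincar\'e--Wirtinger step.

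Second, the large-$r$ regime is not really handled, and in fact the first inequality as literally stated (``for any $r>0$'') cannot hold with a uniform constant: on the round $\S^2$ with $p$ the north pole and $r\to\mathrm{diam}(M)$, the geodesic sphere $\partial B_r(p)$ shrinks to the antipodal point, and the average of an $H^1$ function over a shrinking circle is not uniformly controlled by its Dirichlet energy (the relevant constant grows like $|\log(\mathrm{diam}(M)-r)|^{1/2}$). Your sentence ``averages over $B_r(p)$ with $r\geq r_0$ cannot differ by more than $C\|\nabla u\|_{L^2(M)}$'' also silently replaces $\|\nabla u\|_{L^2(B_r(p))}$ by $\|\nabla u\|_{L^2(M)}$, which is a weaker statement. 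In practice this is harmless because the lemma is only invoked in Lemma \ref{mt-annulus} for radii bounded by a fixed small $r_0$, so an honest proof should simply state the result for $r\le r_0$ (injectivity radius), where your scaling argument is complete, rather than claim uniformity in all $r>0$.
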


\medskip

We start now with the following result which is obtained by a dilation argument.
\begin{lem} \label{ball} 
For any $\varepsilon>0$ there exists $C=C(\varepsilon)>0$ such that
\begin{align} \label{ineq-ball}
\begin{split}
8\pi\log \int_{B_{s/2}(p)} e^{u}\,dV_g + \frac{8\pi}{a^2}\log \int_{B_{s/2}(p)} e^{-au}\,dV_g  \leq & \frac{1}{2}\int_{B_s(p)}|\nabla u|^2\,dV_g + \e\int_{M}|\nabla u|^2\,dV_g \\ &+8\pi\left(1-\frac{1}{a}\right)\ov u(s)+16\pi\left(1+\frac{1}{a^2}\right)\log s+ C,
\end{split}
\end{align}
for any $u \in H^1(M), \ p \in M$, $s>0$ sufficiently small, where $\dis{\ov u(s)= \fint_{B_s(p)} u \,dV_g}$. Moreover, under the same assumptions, if $a<\dfrac 12$ it holds
\begin{align} \label{ineq-ball2}
\begin{split}
8\pi\log \int_{B_{s/2}(p)} e^{u}\,dV_g + \frac{16\pi}{a}\log \int_{B_{s/2}(p)} e^{-au}\,dV_g  \leq & \frac{1}{2}\int_{B_s(p)}|\nabla u|^2\,dV_g + \e\int_{M}|\nabla u|^2\,dV_g \\ &-8\pi\ov u(s)+16\pi\left(1+\frac{2}{a}\right)\log s+ C.
\end{split}
\end{align}
\end{lem}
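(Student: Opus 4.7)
The strategy is a two-dimensional dilation argument: rescale onto the unit ball, apply a joint Moser-Trudinger inequality to a zero-mean representative of $u$, and convert the scaling parameters into the boundary terms $\ov u(s)$ and $\log s$. Using normal coordinates at $p$, identify $B_s(p)\subset M$ with $B_s\subset\R^2$ up to a metric factor $1+O(s^2)$, and set
\begin{equation*}
\tilde u(y)=u(p+sy)-\ov u(s),\qquad y\in B_1,
\end{equation*}
so that $\fint_{B_1}\tilde u\,dy=0$ modulo an $o_s(1)$ correction (controlled via Lemma \ref{lem:media}). Since the Dirichlet integral is scale invariant in two dimensions one has $\int_{B_1}|\n\tilde u|^2\,dy=(1+o_s(1))\int_{B_s(p)}|\n u|^2\,dV_g$, and any spurious term $\e'\int_{B_1}|\n\tilde u|^2\,dy$ produced below will be absorbed into the slack $\e\int_M|\n u|^2\,dV_g$.

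Next I would apply the Euclidean analogue of Proposition \ref{mt-local} on $B_1$ with $\O_1=B_{1/2}$, $\O_2=B_1$ to the zero-mean function $\tilde u$, obtaining
\begin{equation*}
8\pi\log\int_{B_{1/2}}e^{\tilde u}\,dy+\frac{8\pi}{a^2}\log\int_{B_{1/2}}e^{-a\tilde u}\,dy\leq \frac{1+\e'}{2}\int_{B_1}|\n\tilde u|^2\,dy+C.
\end{equation*}
For \eqref{ineq-ball2} the observation is that the admissible set of coefficient pairs in the joint Moser-Trudinger inequality is convex: any convex combination of two admissible inequalities is admissible with the same Dirichlet coefficient. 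The sharp Moser-Trudinger applied to $\tilde u$ alone furnishes the pair $(8\pi,0)$ and the previous display the pair $\left(8\pi,\tfrac{8\pi}{a^2}\right)$; combining them with weights $(1-2a,2a)$---both nonnegative exactly when $a<\tfrac 12$---yields the pair $\left(8\pi,\tfrac{16\pi}{a}\right)$ with the Dirichlet coefficient unchanged:
\begin{equation*}
8\pi\log\int_{B_{1/2}}e^{\tilde u}\,dy+\frac{16\pi}{a}\log\int_{B_{1/2}}e^{-a\tilde u}\,dy\leq \frac{1+\e'}{2}\int_{B_1}|\n\tilde u|^2\,dy+C.
\end{equation*}

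Finally I would unfold the scaling via
\begin{equation*}
\log\int_{B_{1/2}}e^{\tilde u}\,dy=\log\int_{B_{s/2}(p)}e^{u}\,dV_g-\ov u(s)-2\log s+o_s(1),
\end{equation*}
\begin{equation*}
\log\int_{B_{1/2}}e^{-a\tilde u}\,dy=\log\int_{B_{s/2}(p)}e^{-au}\,dV_g+a\ov u(s)-2\log s+o_s(1),
\end{equation*}
and substitute into the two rescaled Moser-Trudinger inequalities. The coefficients of $\ov u(s)$ collect as $8\pi-\tfrac{8\pi}{a}=8\pi\bigl(1-\tfrac 1a\bigr)$ and $8\pi-16\pi=-8\pi$, while those of $\log s$ collect as $2\bigl(8\pi+\tfrac{8\pi}{a^2}\bigr)=16\pi\bigl(1+\tfrac{1}{a^2}\bigr)$ and $2\bigl(8\pi+\tfrac{16\pi}{a}\bigr)=16\pi\bigl(1+\tfrac 2a\bigr)$, in perfect agreement with \eqref{ineq-ball} and \eqref{ineq-ball2}. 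The conceptual point---and the step I would most want to double-check---is the convex-combination argument, which identifies the threshold $a<\tfrac 12$ as exactly the range in which the non-standard pair $\left(8\pi,\tfrac{16\pi}{a}\right)$ lies below the sharp pair $\left(8\pi,\tfrac{8\pi}{a^2}\right)$ of Proposition \ref{mt-local}. The remaining work (normal coordinates, metric distortion, mean-shift correction) is routine book-keeping.
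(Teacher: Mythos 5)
Your proof is correct and follows essentially the same route as the paper: dilate $B_s(p)$ to unit scale and apply the localized Moser--Trudinger inequality of Proposition \ref{mt-local} with $\Omega_1=B_{1/2}$, $\Omega_2=B_1$, then unfold the scaling into the $\ov u(s)$ and $\log s$ terms. For \eqref{ineq-ball2} the paper simply replaces $\frac{8\pi}{a^2}$ by the smaller constant $\frac{16\pi}{a}$ (legitimate precisely when $a<\frac12$), which is exactly what your convex combination of the pairs $(8\pi,0)$ and $\bigl(8\pi,\frac{8\pi}{a^2}\bigr)$ with weights $(1-2a,2a)$ accomplishes --- your version merely makes explicit the single-function local inequality (equivalently, the Jensen-type lower bound on the second logarithm) that the paper leaves implicit.
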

\begin{proof}
Suppose for simplicity the metric around the point $p$ is flat. Consider then a dilation of $u$ given by
$$ 
v(x)= u(s x+ p). 
$$
We clearly have
$$ 
\int_{B_s(p)}|\nabla u|^2 \,dV_g = \int_{B_1(0)}|\nabla v|^2 \,dV_g, \qquad \int_{B_{s/2}(p)}e^u \,dV_g = s^2 \int_{B_{1/2}(0)}e^v \,dV_g, \qquad \ov u(s)= \fint_{B_1(0)} v\,dV_g. 
$$
Taking into account the above equalities and by applying the local version of the Moser-Trudinger inequality stated in Proposition \ref{mt-local} to the functions $v, -av$ we get inequality \eqref{ineq-ball}. When $a<\dfrac 12$ one observes that $\dfrac{16\pi}{a} < \dfrac{8\pi}{a^2}$ and hence we may apply Proposition \ref{mt-local} to $v, -av$ with the constant $\dfrac{16\pi}{a}$ replacing $\dfrac{8\pi}{a^2}$ to deduce \eqref{ineq-ball2}.
\end{proof}

\medskip

We next consider an annulus and derive an improved inequality by exploiting the Kelvin's transform. 
\begin{lem} \label{mt-annulus} 
Given $\varepsilon>0$, there exists $r_0>0$, $r_0=r_0(\e,M)$ such that for any $r\!\in \!(0,r_0)$ fixed, there exists \mbox{$C\!=C(r,\varepsilon)>\!0$} such that, for any $u\in H^1(M)$ with $u=0$ on $\partial B_{2r}(p)$, if $a\geq \dfrac 12$
\begin{align} \label{ineq-ann}
\begin{split}
8\pi\log \int_{A_p(s, r)}\!\!\! e^{u} \,dV_g + \frac{8\pi}{a^2}\log \int_{A_p(s,r)}\!\!\! e^{-au} \,dV_g \leq &  \frac{1}{2}\int_{A_p(s/2,2 r)}|\nabla u|^2 \,dV_g +\e \int_M |\nabla u|^2 \,dV_g \\
-& 8\pi\left(1-\frac{1}{a}\right)(1+\e)\ov u(s)-16\pi\left(1+\frac{1}{a^2}\right)(1+\e)\log s+ C, 
\end{split}
\end{align}
and if $a<\dfrac 12$
\begin{align} \label{ineq-ann2}
\begin{split}
8\pi\log \int_{A_p(s, r)} e^{u} \,dV_g + \frac{8\pi}{a^2}\log \int_{A_p(s,r)} e^{-au} \,dV_g \leq &  \frac{1}{2}\int_{A_p(s/2,2 r)}|\nabla u|^2 \,dV_g +\e \int_M |\nabla u|^2 \,dV_g \\
+&8\pi(1+\e)\ov u(s)-16\pi\left(1+\frac{2}{a}\right)(1+\e)\log s+ C, 
\end{split}
\end{align}
with $ p \in M$, $s\in(0,r)$, where $\dis{\ov u(s)= \fint_{B_s(p)} u \,dV_g}$.
\end{lem}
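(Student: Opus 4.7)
My plan is to reduce the annular estimate to the ball estimate of Lemma~\ref{ball} via a Kelvin-type inversion which interchanges the inner and outer boundaries of the annulus. I will work in a normal coordinate chart centered at $p = 0$ so that the metric is approximately flat, and consider the conformal involution $F(y) = R^2 y/|y|^2$ with $R = \sqrt{sr}$. Then $F$ maps $\p B_\rh$ to $\p B_{R^2/\rh}$ and, in particular, swaps $\p B_{s/2}$ with $\p B_{2r}$, preserving the annulus $A_p(s,r)$ and its slight enlargement $A_p(s/2, 2r)$. Setting $v(y) = u(F(y))$ and using the boundary hypothesis $u|_{\p B_{2r}} = 0$, I obtain $v|_{\p B_{s/2}} = 0$, so the zero-extension $\wtilde v$ of $v$ from $A_p(s/2, 2r)$ to the full disk $B_{2r}(p)$ lies in $H^1(B_{2r}(p))$; a smooth cut-off then extends it to $H^1(M)$ with controlled norm.

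I will then use two transformation identities. First, by the 2D conformal invariance of the Dirichlet integral,
$$
\int_{B_{2r}(p)} |\n \wtilde v|^2\,dV_g \;=\; \int_{A_p(s/2, 2r)} |\n v|^2\,dV_g \;=\; \int_{A_p(s/2, 2r)} |\n u|^2\,dV_g.
$$
Second, the Kelvin Jacobian $|J_F| = R^4/|y|^4 = (sr)^2/|y|^4$, combined with the elementary bound $|y|^{-4} \le s^{-4}$ on $A_p(s,r)$, gives
$$
\int_{A_p(s,r)} e^{\a u}\,dV_g \;\le\; (r/s)^2 \int_{A_p(s,r)} e^{\a v}\,dV_g \;\le\; (r/s)^2 \int_{B_r(p)} e^{\a \wtilde v}\,dV_g
$$
for $\a \in \{1, -a\}$ (the zero extension on $B_{s/2}$ contributes only a harmless constant). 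Taking logarithms and weighting by $8\pi$ and $8\pi/a^2$ one reads off exactly the $-16\pi(1 + 1/a^2)\log s$ term appearing on the right-hand side of \eqref{ineq-ann}.

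Next I would apply Lemma~\ref{ball} to $\wtilde v$ on the disk $B_{2r}(p)$, with the role of the small radius there played by $2r$ (so $\log(2r) = O(1)$ is absorbed into the constant). Conformal invariance converts $\int_{B_{2r}(p)}|\n \wtilde v|^2$ back into $\int_{A_p(s/2,2r)}|\n u|^2$, and the $\e$-error on $\int_M |\n u|^2$ is preserved. Finally the average term $\ov{\wtilde v}(2r)$ produced by Lemma~\ref{ball} must be matched with $\ov u(s)$: here I would invoke Lemma~\ref{lem:media} to compare the Kelvin-pullback of the averages with the spherical and ball averages of $u$ at scale $s$, absorbing the residual mismatch into the Dirichlet energy with a tolerance factor $(1+\e)$, which explains where the $(1+\e)$-factors in the statement come from.

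For $a < 1/2$ the same strategy applies, but I would use the variant \eqref{ineq-ball2} of the ball inequality, which replaces $8\pi/a^2$ by the smaller constant $16\pi/a$ on the $e^{-au}$-term; this accounts for the modified coefficient $16\pi(1 + 2/a)$ on $\log s$ and the contribution $+8\pi(1+\e)\ov u(s)$ in \eqref{ineq-ann2}. The main technical obstacle will be the sharp tracking of the average term and the $(1+\e)$-tolerances: the $\log s$ coefficient is dictated transparently by the Kelvin Jacobian, whereas the $\ov u(s)$-coefficient requires a careful Poincar\'e--Wirtinger comparison through the inversion (using both estimates of Lemma~\ref{lem:media}) in order to absorb the mismatch between $\ov{\wtilde v}(2r)$ and $\ov u(s)$ into the Dirichlet energy on $A_p(s/2,2r)$ without losing the desired coefficients.
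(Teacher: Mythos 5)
Your idea of reducing to a ball estimate via Kelvin inversion is the right starting point, and the Jacobian computation correctly produces the $-16\pi(1+1/a^2)\log s$ term; however, the proposal has a genuine gap. The paper's transform is not just $v(y)=u(K(y))$: it is $\wtilde u(x)=u(K(x))-\beta\log|x-p|$ with a free parameter $\beta\in[-4/a,4]$ (and a constant extension inside $B_{s/2}$). This logarithmic correction is essential and cannot be omitted.

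Here is why your version fails. With $\beta=0$, the Dirichlet integral of $\wtilde v$ has no cross term, and after applying the local Moser--Trudinger inequality and matching $\ov{\wtilde v}(2r)\approx\ov u(s)$ via Lemma~\ref{lem:media}, the coefficient you obtain on $\ov u(s)$ is $+8\pi\bigl(1-\frac 1a\bigr)$. The statement \eqref{ineq-ann} requires $-8\pi\bigl(1-\frac 1a\bigr)(1+\e)$, i.e.\ the \emph{opposite} sign (recall $1-\frac 1a<0$ for $a\in[1/2,1)$). This is not a tolerance you can absorb: $C=C(r,\e)$ is not allowed to depend on $s$, and a wrong-sign $\ov u(s)$ term is of size $O\bigl(s^{-1}\|\n u\|_{L^2}\bigr)$ in general, so it cannot be hidden in $\e\int|\n u|^2$ uniformly as $s\to 0$. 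Moreover, the correct signs are precisely what allow \eqref{ineq-ball} and \eqref{ineq-ann} to be summed with cancellation in the proof of Proposition~\ref{mt-improved}, so the wrong-sign version is useless downstream. In the paper, the correct coefficient $8\pi\bigl(1-\frac1a\bigr)-2\pi\beta$ arises from the cross term $G_3=2\beta\,\n u(K(x))\cdot\frac{x-p}{|x-p|^2}\,sr$ in $|\n\wtilde u|^2$, which after integration by parts contributes $-4\pi\beta\,\ov u(s)$ (up to Dirichlet errors). Choosing $\beta=8\bigl(1-\frac1a\bigr)$ for $a\ge\frac12$, or $\beta=-\frac4a$ for $a<\frac12$, simultaneously flips the $\ov u(s)$ sign and produces the stated $\log s$ coefficients $-16\pi\bigl(1+\frac1{a^2}\bigr)$ and $-16\pi\bigl(1+\frac2a\bigr)$, via $8\pi(\beta-2)-\frac{8\pi}{a^2}(a\beta+2)-\pi\beta^2$. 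A secondary issue: for $a<\frac12$ you propose applying \eqref{ineq-ball2}, which weights $\log\int e^{-a\wtilde v}$ by $\frac{16\pi}{a}$, but \eqref{ineq-ann2} needs the weight $\frac{8\pi}{a^2}$ on the left-hand side; the paper instead always applies Proposition~\ref{mt-local} (keeping weights $8\pi$, $\frac{8\pi}{a^2}$) directly in $B_{2r}(p)$, and the distinction between the two ranges of $a$ enters only through the choice of $\beta$.
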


\begin{proof}
Suppose for simplicity the metric around the point $p$ is flat. We introduce the Kelvin's transform $ K : A_p(s/2, 2r) \rightarrow A_p(s/2, 2r)$ defined by
$$ 
K(x)= p+ r s \frac{x-p}{\ |x-p|^2}.
$$
$K$ is constructed in such a way that it maps the interior boundary of $A_p(s/2, 2r)$ onto the exterior one and vice versa. By means of the latter map we consider $\wtilde{u}\in H^1(B_p(2r))$ given by
$$
\wtilde{u}(x)= \left \{ \begin{array}{ll} 
u\bigr(K(x)\bigr) -\b \log|x-p|  & \mbox{ for } |x-p| \geq s/2, \\
-\b \log \left(\frac s2\right) & \mbox{ for } |x-p| \leq s/2, 
\end{array} \right. 
$$
where $\b\in\left[ -\frac 4a, 4 \right]$ will be chosen later. We aim to apply the local Moser-Trudinger inequality given by Proposition \ref{mt-local} to $\wtilde{u}, -a\wtilde u$. To this end, we need to consider
\begin{align} \label{exp}
\begin{split} 
\int_{A_p(s, r)} e^{\wtilde{u}} \,dV_g &=  \int_{A_p(s,r)} e^{u(K(x))}|x-p|^{-\b} \,dV_g = \int_{A_p(s,r)} e^{u(K(x))}\frac{|x-p|^{4-\b}}{s^2r^2}\frac{(sr)^2}{|x-p|^4} \,dV_g \\
&= \int_{A_p(s,r)} e^{u(K(x))}\frac{(sr)^{2-\b}}{|K(x)-p|^{4-\b}}|J(K(x))| \,dV_g = \int_{A_p(s, r)} e^{u(x)} \frac{(sr)^{2-\b}}{|x-p|^{4-\b}} \,dV_g, \\
&\geq C(r)\int_{A_p(s, r)} e^{u(x)} s^{2-\b} \,dV_g,
\end{split}
\end{align} 
where $J(K(x))$ denotes the Jacobian of $K$; in the last inequality we have used $|x|<r$ and $\b\leq 4$. Reasoning in a similar way we get
\begin{equation}\label{exp2}
\int_{A_p(s, r)} e^{\wtilde{u}} \,dV_g = C(r)\int_{A_p(s, r)} e^{-au(x)} s^{2+ a\b} \,dV_g,
\end{equation}
where we used $\b\geq -\frac 4a$. Therefore, by \eqref{exp}, \eqref{exp2} and then by Proposition \ref{mt-local} applied to $\wtilde{u}, -a\wtilde u$ in $B_{2r}(p)$ we obtain 
\begin{align*} 
	8\pi\log \int_{A_p(s, r)}\!\! e^{u(x)} \,dV_g + \frac{8\pi}{a^2}\log \int_{A_p(s, r)} \!\!e^{-au(x)} \,dV_g =& 	8\pi\log \int_{A_p(s, r)}\!\! e^{\wtilde u(x)} \,dV_g + \frac{8\pi}{a^2}\log \int_{A_p(s, r)} \!\!e^{-a	\wtilde u(x)} \,dV_g \\
		&+\left( 8\pi(\b-2)-\frac{8\pi}{a^2}(a\b+2) \right)\log s + C(r) 
\end{align*}
\begin{align} \label{stim}
\begin{split}
\leq  \frac 12 \int_{B_{2r}(p)} |\n \wtilde u(x)|^2\,dV_g + 8\pi\left( 1-\frac 1a \right)\ov{\wtilde u}(2r)+\left( 8\pi(\b-2)-\frac{8\pi}{a^2}(a\b+2) \right)\log s + C(r),
\end{split}
\end{align}
where $\ov{\wtilde u}(2r)=\fint_{B_{2r}(p)} \wtilde u \,dV_g$. To estimate the average part we use Lemma \ref{lem:media} to deduce
$$
	\left|\ov{\wtilde u}(2r) -\fint_{\p B_{2r}(p)} \wtilde u \,dS_g\right| \leq C\left( \int_{B_{2r}(p)} |\n \wtilde u(x)|^2\,dV_g \right)^{1/2} \leq \e \int_{B_{2r}(p)} |\n \wtilde u(x)|^2\,dV_g + C.
$$
Moreover, it holds 
$$
	\fint_{\p B_r(p)} \wtilde u \,dS_g = \fint_{\p B_r(s)} u \,dS_g + C(r).
$$
Therefore, still by Lemma \ref{lem:media} we have
\begin{equation}\label{media}
\left|\ov{\wtilde u}(2r) -\ov{u}(s)\right| \leq \e \int_{B_{2r}(p)} |\n \wtilde u(x)|^2\,dV_g +\e \int_{B_{s}(p)} |\n u(x)|^2\,dV_g + C.
\end{equation}
We are left with the estimate of the gradient term. Since $\wtilde u$ is constant for $|x-p|< s/2$ we need just to consider $|x-p|\geq s/2$, where we have
$$
|\nabla \wtilde{u}(x)|^2 = |\nabla u(K(x))|^2 \frac{s^2 r^2}{|x-p|^4}+\frac{\b^2}{|x-p|^2}+2\b \nabla u(K(x))\cdot \frac{x-p}{|x-p|}sr=G_1+G_2+G_3. 
$$
It is easy to see that
\begin{equation} \label{g1}
	\int_{A_p(s/2, 2r)} G_1 \,dV_g=	\int_{A_p(s/2, 2r)} |\n u|^2 \,dV_g ,
\end{equation}
\begin{equation}\label{g2}
	\int_{A_p(s/2, 2r)} G_2 \,dV_g=	-2\pi\b^2\log s +C(r).
\end{equation}
Now, by using the definition of $K$, by integrating by parts and by $u=0$ on $\partial B_{2r}(p)$ we get
\begin{align*}
	\int_{A_p(s/2, 2r)} G_3 \,dV_g&=	2\b\int_{A_p(s/2, 2r)} \n u(K(x))\cdot \frac{K(x)-p}{|K(x)-p|^2}|J(K(x))|\,dV_g \\
	&= 2\b\int_{A_p(s/2, 2r)} \n u(x) \cdot\frac{x-p}{|x-p|^2}\,dV_g= -2\b\int_{\p B_{s/2}(p)}  u(x) \frac{x-p}{|x-p|^2}\cdot \nu \,dS_g,
\end{align*}
where $\nu$ is the unit outer normal. Observe that $\int_{\p B_{s/2}(p)}\frac{x-p}{|x-p|^2}\cdot \nu \,dS_g=2\pi$ and $\left|\frac{x-p}{|x-p|^2}\cdot \nu\right|\leq \frac{C}{|B_{s/2}(p)|}$. Therefore,
\begin{align*}
\left| \int_{\p B_{s/2}(p)}\!\!\!  u(x) \frac{x-p}{|x-p|^2}\cdot \nu \,dS_g - 2\pi\fint_{\p B_{s/2}(p)} \!\!\! u(x) \,dS_g\right| &= \left| \int_{\p B_{s/2}(p)}\frac{x-p}{|x-p|^2}\cdot \nu\left(  u(x)-\fint_{\p B_{s/2}(p)} \!\!\! u(x) \,dS_g\right)\right| \\
	&\leq C \left| \fint_{\p B_{s/2}(p)}\left(  u(x)-\fint_{\p B_{s/2}(p)}  u(x) \,dS_g\right)\right| \\
	&\leq C \left(\int_{B_{s/2}(p)} |\n u|^2 \,dV_g\right)^{1/2} \\
	&\leq \e \int_{B_{s/2}(p)} |\n u|^2 \,dV_g + C,
\end{align*}
where we have used Lemma \ref{lem:media}. Applying again the latter lemma we deduce that
\begin{equation}\label{g3}
\int_{A_p(s/2, 2r)} G_3 \,dV_g= -2\b\int_{\p B_{s/2}(p)}  u(x) \frac{x-p}{|x-p|^2}\cdot \nu \,dS_g= -4\pi\b\ov u(s) +\e \int_{B_{s/2}(p)} |\n u|^2 \,dV_g + C.
\end{equation}
Finally, by using \eqref{media}, \eqref{g1}, \eqref{g2} and \eqref{g3} in \eqref{stim} we obtain
\begin{align*}
8\pi\log \int_{A_p(s, r)}\!\! e^{u(x)} \,dV_g + \frac{8\pi}{a^2}\log \int_{A_p(s, r)} \!\!e^{-au(x)} \,dV_g &\leq \frac 12 	\int_{A_p(s/2, 2r)} |\n u|^2 \,dV_g+ \left(8\pi\left( 1-\frac 1a \right)-2\pi\b\right)\ov{u}(s) \\
 	&+\left( 8\pi(\b-2)-\frac{8\pi}{a^2}(a\b+2)-\pi\b^2 \right)\log s \\
 	&+ \e\int_M |\n u|^2 \,dV_g + \e \int_{B_{2r}(p)} |\n \wtilde u(x)|^2\,dV_g + C.
\end{align*}
To conclude we just need to take either $\b=8\left( 1-\frac 1a \right)$ for $a\geq \frac 12$ or $\b=-\frac 4a$ for $a<\frac 12$ to get the desired inequalities \eqref{ineq-ann} and \eqref{ineq-ann2}, respectively.
\end{proof}

\medskip

Now we have all the ingredients to prove the main Proposition \ref{mt-improved}.

\medskip

\begin{proof}[Proof of Proposition \ref{mt-improved}.] 
The strategy follows the same steps as in the proof of Proposition 3.2 in \cite{mal-ru} or Proposition 3.6 in \cite{jev} hence we will just present here the main idea.

Let $\psi$ be the map defined in Proposition \ref{conc}. Let $u\in H^1(M)$ be such that 
$$
\psi \left( \frac{e^{u}}{\int_{M} e^{u} \,dV_g} \right
)= \psi \left( \frac{e^{-au}}{\int_{M} e^{-au} \,dV_g} \right
) = (\b,\s).
$$
Then, by Proposition \ref{conc} there exist $p_1, p_2\in M$, $d(p_1,p_2)\leq C\s$ such that
\begin{align*}
&\int_{B_\s(p_1)} e^u \, dV_g > \tau \int_M e^u \,dV_g, \qquad \qquad \int_{B_{R \sigma}(p_1)^c} e^u \, dV_g > \tau\int_M e^u \,dV_g,  \\
&\int_{B_\s(p_2)} e^{-au} \, dV_g > \tau \int_M e^{-au} \,dV_g, \qquad \int_{B_{R \sigma}(p_2)^c} e^{-au} \, dV_g > \tau\int_M e^{-au} \,dV_g, 
\end{align*}
with $\tau>0$ independent of $\s$. Suppose for a moment that $p_1=p_2$. Then, we may apply Lemma \ref{ball}, Lemma \ref{mt-annulus}: summing the inequalities \eqref{ineq-ball} and \eqref{ineq-ann} if $a\geq \frac 12$ (resp. \eqref{ineq-ball2} and \eqref{ineq-ann2} if $a< \frac 12$) the extra term 
$$
8\pi\left(1-\frac{1}{a}\right)\ov u(\s)+16\pi\left(1+\frac{1}{a^2}\right)\log \s, \quad  a\geq \frac 12 \qquad \left(\mbox{resp.} -8\pi\ov u(\s)+16\pi\left(1+\frac{2}{a}\right)\log \s , \quad a<\frac 12\right)
$$
cancels out and we get the desired inequality of Proposition \ref{mt-improved}. However, one needs to face the fact that in general $p_1\neq p_2$ and that $u$ is not identically zero on some $\partial B_{2r}(p)$ as in Lemma \ref{mt-annulus}. To deal with these facts we have to perform some technical modifications involving  dyadic decompositions and harmonic liftings: for full details we refer to \cite{jev, mal-ru}. 
\end{proof}

\medskip

\subsection{Topological set and test functions}
In this subsection we will introduce the topological set which will describe the sublevels $J_\rho^{-L}$: starting from the topological join $M*M$ (recall that $M_1 \cong M$) according to the constraints imposed by the improved Moser-Trudinger inequality of Proposition \ref{mt-improved}. Next, we will construct test functions modeled on this set.

We need to take into account the local scale $\s$ of functions as defined in Proposition \ref{conc}. Since the latter is defined just for functions $f$ such that $\dkr (f,M)<\d$, for $\d=\d_R>0$, we proceed in the following way. Let
$$
	\s_M=\inf \left\{ \s(f) \,:\,  \dkr (f,M)\leq \frac 12\d \right\},
$$ 
and set 
$$
	\ov\s(u) = \min \left\{ \s_M,  \s\left( \frac{e^{u}}{\int_{M} e^{u} \,dV_g}\right) \right\}.
$$
Whenever $\s\left( \frac{e^{u}}{\int_{M} e^{u} \,dV_g}\right)$ is not well-defined we will consider $\s_M$. Let $\psi$ be as in Proposition \ref{conc} and let $s$ be as defined in \eqref{s}. We use the notation
\begin{equation} \label{not}
 \psi \left( \frac{e^{u}}{\int_{M} e^{u} \,dV_g}\right) = \bigr( \b(u), \ov\s(u)  \bigr).
\end{equation}
In the spirit of \eqref{map} we then set $\wtilde\Psi : J_\rho^{-L} \to M*M$,
\begin{equation} \label{Psi}
	\wtilde\Psi(u) = \biggr( \b(u), \b(-au), s\bigr((\ov\s(u),\ov\s(-au)\bigr) \biggr).
\end{equation}
Observe that the point of concentration $\beta$ is defined just for functions $f$ such that $\dkr (f,M)<\d$. However, notice that by the Proposition \ref{p:altern} when one of $\b(u), \b(-au)$ is not defined the other necessarily is, and the map is well defined by the equivalence relation, see \eqref{join}, for $L>0$ sufficiently large. Indeed, we have just to observe that $\ov\s(f)\approx\dkr (f,M)$.  Moreover, the improved inequality of Proposition \ref{mt-improved} gives a lower bound on configurations which have both the same point of concentration $\b$ and scale of concentration $\ov\s$: such configurations are represented by
\begin{equation} \label{S}
 S=\left\{ \left(p,p,\frac 12\right)\,:\, p\in M  \right\} \subset M*M.
\end{equation}
Therefore, we deduce the following result.
\begin{pro} \label{proj}
Suppose $\rho_1 \in (8\pi, 16\pi)$ and either $\rho_2 \in \left(\dfrac{8\pi}{a^2}, \dfrac{16\pi}{a^2} \right)$ if $a\geq \dfrac 12$ or $\rho_2 \in \left(\dfrac{8\pi}{a^2}, \dfrac{8\pi}{a^2}+\dfrac{16\pi}{a} \right)$ if $a< \dfrac 12$. Let $\wtilde\Psi$ be as in \eqref{Psi} and let $S$ be as in \eqref{S}. Then, for $L$ sufficiently large it holds that
$$
	\wtilde\Psi : J_\rho^{-L} \to (M*M) \setminus S.
$$
\end{pro}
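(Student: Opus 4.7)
The plan is to argue by contradiction: if $\wtilde\Psi(u) \in S$ for some $u \in J_\rho^{-L}$, then both $e^u$ and $e^{-au}$ concentrate at the same point and at the same scale, triggering the scale-invariant improved inequality of Proposition \ref{mt-improved}; combined with the strict bounds on $(\rho_1,\rho_2)$, this will force a uniform lower bound on $J_\rho$, contradicting $L$ large.

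First I would check that $\wtilde\Psi$ is well-defined on $J_\rho^{-L}$. Proposition \ref{p:altern} guarantees that for $L$ large $\min\{d_1, d_2\} < \e$ for any prescribed $\e > 0$; choosing $\e$ below the threshold $\d$ of Proposition \ref{conc} ensures that at least one of $\b(u), \b(-au)$ is defined. When only one is defined, the ratio $d_1/(d_1+d_2)$ falls outside $[1/4, 3/4]$, so $s \in \{0,1\}$, and the equivalence relation in the join \eqref{join} identifies away the undefined coordinate, so $\wtilde\Psi(u)$ is a legitimate point of $M*M$.

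Next, suppose by contradiction that $\wtilde\Psi(u) \in S$, i.e.\ $\b(u) = \b(-au) = p$ and $s\bigl(\ov\s(u), \ov\s(-au)\bigr) = \tfrac{1}{2}$. By the piecewise definition of $F$ in \eqref{s}, $s = \tfrac{1}{2}$ forces $\ov\s(u) = \ov\s(-au)$; for $L$ large enough both genuine scales lie strictly below the cap $\s_M$, so $\ov\s$ coincides with the actual $\sigma$ for both components, and Proposition \ref{mt-improved} applies. Setting $\mu = \max\bigl\{\rho_1/(16\pi),\, \rho_2 a^2/(16\pi)\bigr\}$ if $a \geq \tfrac{1}{2}$ (resp.\ $\mu = \max\bigl\{\rho_1/(16\pi),\, \rho_2/(8\pi/a^2 + 16\pi/a)\bigr\}$ if $a < \tfrac{1}{2}$), the hypotheses give $\mu < 1$. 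Using Jensen's inequality to infer $\log\int_M e^{u-\ov u}\,dV_g \geq 0$ and $\log\int_M e^{-a(u-\ov u)}\,dV_g \geq 0$, absorbing the bounded factors $h_i$ into an additive constant, and applying \eqref{imp-ineq} or \eqref{imp-ineq2} accordingly, I obtain
$$
J_\rho(u) \geq \frac{1 - \mu(1+\e)}{2}\int_M |\n u|^2\,dV_g - C.
$$
Choosing $\e < (1-\mu)/\mu$ keeps the coefficient of the Dirichlet energy positive, so $J_\rho(u) \geq -C$ uniformly, contradicting $u \in J_\rho^{-L}$ for $L > C$.

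The delicate step is bridging the capped $\ov\s$ and the genuine scale $\sigma$ required by Proposition \ref{mt-improved}: I need to verify that in sufficiently deep sublevels the $\sigma$'s lie well below $\s_M$, so that the equality $\ov\s(u) = \ov\s(-au)$ forced by $s = \tfrac{1}{2}$ translates into equality of the actual $\psi$-values. This should follow from the standard principle that very low energy forces very strong concentration of each component around its $\b$-point, so $\sigma \to 0$ as $L \to +\infty$.
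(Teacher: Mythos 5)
Your overall strategy is exactly the one the paper has in mind (the paper leaves this proof essentially implicit): well-definedness of $\wtilde\Psi$ via Proposition \ref{p:altern} and the equivalence relation in \eqref{join}, and exclusion of $S$ by combining the scale-invariant improved inequality of Proposition \ref{mt-improved} with the strict bounds $\rho_1<16\pi$ and $\rho_2<\frac{16\pi}{a^2}$ (resp. $\rho_2<\frac{8\pi}{a^2}+\frac{16\pi}{a}$), which is the content of your coefficient $\mu<1$ and Jensen argument; that part is correct.

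The genuine gap is in the step you yourself flag, and the justification you propose for it does not work. You claim that for $L$ large \emph{both} genuine scales $\s\bigl(e^{u}/\int_M e^{u}\bigr)$ and $\s\bigl(e^{-au}/\int_M e^{-au}\bigr)$ lie below the cap $\s_M$ because ``very low energy forces very strong concentration of each component''. This principle is false: Proposition \ref{p:altern} only gives an \emph{alternative} (at least one of $d_1,d_2$ is small), and indeed the test functions \eqref{test-f} with $s$ close to $0$ or $1$ have arbitrarily low energy while one of the two components is essentially constant, hence not concentrated at all; for such $u$ the corresponding genuine scale is not even defined and $\ov\s$ is capped at $\s_M$. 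So you cannot deduce ``$\sigma\to0$ for both components as $L\to+\infty$''. The correct bridge uses the contradiction hypothesis itself: if $\wtilde\Psi(u)\in S$ then both $\b(u),\b(-au)$ are defined and, as you observe, $s=\tfrac12$ forces $\ov\s(u)=\ov\s(-au)$. By Proposition \ref{p:altern} with $\e$ small, at least one of the two normalized exponentials satisfies $\dkr(\cdot,M)<\e$, and since $\ov\s(f)\approx \dkr(f,M)$ its truncated scale equals its genuine scale and is smaller than $\s_M$. The equality $\ov\s(u)=\ov\s(-au)$ then pushes the \emph{other} truncated scale strictly below the cap as well, so both coincide with the genuine scales, which are therefore equal; together with $\b(u)=\b(-au)$ this gives exactly the hypothesis $\psi\bigl(e^{u}/\int_M e^{u}\bigr)=\psi\bigl(e^{-au}/\int_M e^{-au}\bigr)$ of Proposition \ref{mt-improved}, and your lower bound on $J_\rho$ then concludes. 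With this replacement your argument is complete; as written, the bridging step rests on a false claim.
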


Moreover, we can construct a map on the other way round by mapping $(M*M) \setminus S$ into the sublevels $J_\rho^{-L}$. More precisely, we will consider a deformation retract $X_\l$ of $(M*M) \setminus S$ which is more suitable to modeled the test functions on. Indeed, let $\bar\d>0$ be sufficiently small and $\l>0$ be sufficiently large: in $(M*M) \setminus S$ we can either deform two distinct points up to have mutual distance at least $\bar\d$ or deform the join parameter $s$ to be very close either to $0$, i.e. $s\leq \frac 1\l$ (when $s<1/2$), or to $1$, i.e. $s\geq 1-\frac 1\l$ (when $s>1/2$). In doing this we end up with the set $X_\l\subset (M*M) \setminus S$ defined as
\begin{equation} \label{x}
	X_\l = \biggr\{ \bigr(p,q,s\bigr)\,:\, p,q\in M, \, d(p,q)\geq\bar\d, \, s\in[0,1] \biggr\} \cup \left\{ \bigr(p,q,s\bigr)\,:\, p,q\in M, \, d(p,q)\leq\bar\d, \, s\leq\frac 1\l \  \mbox{or} \ s\geq 1-\frac 1\l \right\},
\end{equation}
with the convention that whenever $s\in\{0,1\}$ we do not impose any restriction on the points $p,q$ (recall the equivalence relation in the definition of the topological join, see \eqref{join}). We consider now test functions modeled on the latter set: for $\xi=\bigr(p,q,s\bigr)\in X_\l$ we define
$$
	\l_{1,s}=(1-s)\l, \qquad \l_{2,s}=s\l,
$$
and 
\begin{equation} \label{test-f}
	\wtilde\Phi(\xi)= \var_{\l,\xi}(x)=  \log  \left( \frac{1}{1 + \l_{1,s}^2 d(x,p)^2} \right)^2- \frac 1a \log \left( \frac{1}{1 + \l_{2,s}^2 d(x,q)^2} \right)^2.
\end{equation}
Observe that the above map is well defined in the topological join due to the expressions of $\l_{i,s}$. In the following result we will show that $\wtilde\Phi$ maps $X_\l$ into the low sublevels $J_\rho^{-L}$.
\begin{pro} \label{p:test2}
Suppose $\rho_1 > 8\pi$ and $\rho_2 > \dfrac{8\pi}{a^2}$ with $a\in(0,1)$. Let $X_\l$ and $\var_{\l,\xi}$ be as given in \eqref{x}, \eqref{test-f}, respectively. Then, it holds
$$
	J_\rho \left( \var_{\l,\xi} \right) \to -\infty \quad \mbox{as } \l\to+\infty \qquad \mbox{uniformly in } \xi\in X_\l.
$$
Letting $\wtilde\Phi$ be defined as in \eqref{test-f}, it follows that for any $L>0$ there exists $\l>0$ large such that
$$
	\wtilde\Phi: X_\l \to J_\rho^{-L}.
$$
\end{pro}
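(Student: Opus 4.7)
The plan is to decompose $\varphi_{\lambda,\xi}=\varphi_1-\tfrac{1}{a}\varphi_2$ with $\varphi_1(x)=-2\log(1+\lambda_{1,s}^2 d(x,p)^2)$ and $\varphi_2(x)=-2\log(1+\lambda_{2,s}^2 d(x,q)^2)$ two standard Liouville bubbles, and to estimate each ingredient of $J_\rho(\varphi_{\lambda,\xi})$ separately. The single-bubble computations from the proof of Proposition \ref{p:test} (applied to $\varphi_1$ and to $a\varphi_2$) give, for $\lambda_{i,s}\ge 1$, the asymptotics $\tfrac{1}{2}\int_M|\nabla\varphi_i|^2\,dV_g=16\pi\log\lambda_{i,s}+O(1)$, $\int_M\varphi_i\,dV_g=-4\log\lambda_{i,s}+O(1)$, $\log\int_M e^{\varphi_i}\,dV_g=-2\log\lambda_{i,s}+O(1)$, and $\log\int_M e^{-a\varphi_i}\,dV_g=4a\log\lambda_{i,s}+O(1)$; for $\lambda_{i,s}\le 1$ the bubble $\varphi_i$ is bounded in $C^2$ and each such quantity is $O(1)$.

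Next, split $X_\lambda$ according to its definition \eqref{x}. In the boundary regime $s\le 1/\lambda$ (resp.\ $s\ge 1-1/\lambda$) one has $\lambda_{2,s}\le 1$ (resp.\ $\lambda_{1,s}\le 1$), so $\varphi_{\lambda,\xi}$ equals the surviving bubble plus a $C^1$-bounded perturbation; then $J_\rho(\varphi_{\lambda,\xi})$ reduces to the single-bubble estimate $(16\pi-2\rho_1)\log\lambda_{1,s}+o(\log\lambda)$ (resp.\ $(\tfrac{16\pi}{a^2}-2\rho_2)\log\lambda_{2,s}+o(\log\lambda)$), which tends to $-\infty$ by the hypotheses $\rho_1>8\pi$, $\rho_2>8\pi/a^2$. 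In the interior regime $s\in[1/\lambda,1-1/\lambda]$ the definition of $X_\lambda$ forces $d(p,q)\ge\bar\delta$ and $\lambda_{1,s},\lambda_{2,s}\ge 1$. Decomposing $M=B_{\bar\delta/2}(p)\cup B_{\bar\delta/2}(q)\cup M'$ into three disjoint pieces, on $B_{\bar\delta/2}(p)$ the factor $e^{-\varphi_2/a}$ is essentially the constant $\lambda_{2,s}^{4/a}$ up to $O(1)$ (since $d(x,q)\ge\bar\delta/2$), so the $p$-ball contribution reduces to a single-bubble estimate for $\varphi_1$, and symmetrically on $B_{\bar\delta/2}(q)$. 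Combining with $\bar\varphi_{\lambda,\xi}=-4\log\lambda_{1,s}+\tfrac{4}{a}\log\lambda_{2,s}+O(1)$ this yields
\begin{equation*}
\log\int_M e^{\varphi_{\lambda,\xi}-\bar\varphi_{\lambda,\xi}}\,dV_g=2\log\lambda_{1,s}+O(1),\qquad \log\int_M e^{-a(\varphi_{\lambda,\xi}-\bar\varphi_{\lambda,\xi})}\,dV_g=2\log\lambda_{2,s}+O(1).
\end{equation*}

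The main obstacle is the gradient cross term $\int_M\nabla\varphi_1\cdot\nabla\varphi_2\,dV_g$ in the interior regime, which a priori could grow like $\log\lambda$ and spoil the estimate. The key observation is that $\int_M(-\Delta\varphi_2)\,dV_g=0$ on the closed surface, so one can write $\int_M\nabla\varphi_1\cdot\nabla\varphi_2\,dV_g=\int_M(\varphi_1-\bar\varphi_1)(-\Delta\varphi_2)\,dV_g$. Since $d(p,q)\ge\bar\delta$, the function $\varphi_1-\bar\varphi_1$ is uniformly bounded on $B_{\bar\delta/2}(q)$ (using $\varphi_1(q)-\bar\varphi_1=O(1)$ together with $\|\nabla\varphi_1\|_{L^\infty(B_{\bar\delta/2}(q))}\le 8/\bar\delta$), and the weak convergence $-\Delta\varphi_2\rightharpoonup 8\pi\delta_q-8\pi/|M|$ as $\lambda_{2,s}\to\infty$ gives $\int_M\nabla\varphi_1\cdot\nabla\varphi_2\,dV_g=8\pi(\varphi_1(q)-\bar\varphi_1)+o_\lambda(1)=O(1)$; in the remaining subcase $\lambda_{2,s}\le C$, $\varphi_2$ itself is bounded in $C^2$ and Cauchy--Schwarz bounds the cross term by $O(\sqrt{\log\lambda})$, which is still subleading. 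Hence $\tfrac{1}{2}\int_M|\nabla\varphi_{\lambda,\xi}|^2\,dV_g=16\pi\log\lambda_{1,s}+\tfrac{16\pi}{a^2}\log\lambda_{2,s}+o(\log\lambda)$. Assembling all the pieces,
\begin{equation*}
J_\rho(\varphi_{\lambda,\xi})\le(16\pi-2\rho_1)\log\max(\lambda_{1,s},1)+\Bigl(\tfrac{16\pi}{a^2}-2\rho_2\Bigr)\log\max(\lambda_{2,s},1)+o(\log\lambda);
\end{equation*}
both coefficients are strictly negative by assumption, and since $\max(\lambda_{1,s},\lambda_{2,s})\ge\lambda/2$ for every $s\in[0,1]$, we conclude that $J_\rho(\varphi_{\lambda,\xi})\to-\infty$ as $\lambda\to+\infty$ uniformly in $\xi\in X_\lambda$.
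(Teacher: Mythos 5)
Your proof is correct and follows essentially the same route as the paper's: the same splitting of $X_\l$ into the regime where one of $\l_{1,s},\l_{2,s}$ is $O(1)$ (so that bubble is a bounded perturbation and one is reduced to the single-bubble estimates of Proposition \ref{p:test}) and the regime $d(p,q)\geq\bar\d$, with the same asymptotics for the Dirichlet energy, the average and the two exponential integrals, and the same final bound $J_\rho(\varphi_{\l,\xi})\leq\bigl(16\pi-2\rho_1+o_\l(1)\bigr)\log\l_{1,s}+\bigl(\tfrac{16\pi}{a^2}-2\rho_2+o_\l(1)\bigr)\log\l_{2,s}+O(1)$. The one genuine difference is the mixed gradient term: the paper simply quotes Proposition 3.3 of \cite{bjmr} for $\int_M\n v_1\cdot\n v_2\,dV_g\leq C$, whereas you prove it directly by writing it as $\int_M(\varphi_1-\bar\varphi_1)(-\D\varphi_2)\,dV_g$, which is a nice self-contained alternative. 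Be aware, however, that your stated weak limit $-\D\varphi_2\rightharpoonup 8\pi\d_q-8\pi/|M|$ is not accurate for this bubble on a general surface (away from $q$ the limit behaves like $4\D_g\log d(\cdot,q)$ up to curvature corrections, not a constant), and pairing a weak-$*$ limit of measures against the $\l$-dependent, unbounded function $\varphi_1-\bar\varphi_1$ requires justification. The conclusion $O(1)$ is nevertheless correct and the fix is routine: split the integral and use $\|\varphi_1-\bar\varphi_1\|_{L^\infty(B_{\bar\d/2}(q))}=O(1)$ together with $\|\D\varphi_2\|_{L^1(B_{\bar\d/2}(q))}=O(1)$ on the ball, and $\|\D\varphi_2\|_{L^\infty(M\setminus B_{\bar\d/2}(q))}=O(1)$ together with $\|\varphi_1-\bar\varphi_1\|_{L^1(M)}=O(1)$ outside; this yields the needed bound without identifying the limit measure.
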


\begin{proof}
Let $v_1,v_2: M \rightarrow \mathbb{R}$ be given by
\begin{equation} \label{v}
    v_1(x) = \log  \left( \frac{1}{1 + \l_{1,s}^2 d(x,p)^2} \right)^2, \qquad
    v_2(x) = \log \left( \frac{1}{1 + \l_{2,s}^2 d(x,q)^2} \right)^2.
\end{equation}
so that $\var_{\l,\xi}=v_1-\frac 1a v_2$. Observe that by construction of the set $X_\l$ in \eqref{x} we need to carry out the energy estimates in the following two regimes: either the two points of concentration $p,q\in M$ are close and the scale of concentration are very different, i.e. $d(p,q)\leq \bar\d$ and $s=1-\frac 1\l$ (resp. $s=\frac 1\l$), or $p,q$ are such that $d(p,q)\geq \bar\d$. We start by pointing out that in the first alternative we have $\l_{1,s} \leq 1$ (resp. $\l_{2,s}\leq 1$), see the definition before \eqref{test-f}. It follows that $v_1$ (resp. $v_2$) and its derivative are uniformly bounded. Therefore, the test function $\var_{\l,\xi}$ resembles the standard bubble or the one in \eqref{bubble} for which the energy estimates are well known, see Proposition \ref{p:test}.

Let us consider now the case $d(p,q)\geq \bar\d$. Moreover, we take $s\in(0,1)$ otherwise we conclude as before. As in the proof of Proposition \ref{p:test} it holds
\begin{equation} \label{gr1-}
    |\n v_i(x)| \leq C \l_{i,s}, \qquad \mbox{for every $x\in M$ and $s\in[0,1],$} \quad i=1,2,
\end{equation}
where $C$ is a constant independent of $\l$, $\xi \in X_\l$, and
\begin{equation} \label{gr2-}
    |\n v_1(x)| \leq \frac{4}{d(x,p)}, \qquad \mbox{for every $x\in M,$} \quad i=1,2,
\end{equation}
and similarly for $v_2$.

\medskip

We have
$$
    \frac 12 \int_{M} |\n \var_{\l,\xi}|^2 \,dV_g  =  \frac{1}{2} \int_M \left(|\n v_1|^2 + \frac{1}{a^2}|\n v_2|^2 - \frac{2}{a} \n v_1 \cdot \n v_2\right) \,dV_g. 
$$
Reasoning as in Proposition 3.3 in \cite{bjmr} it is easy to show that the integral of the mixed term $\n v_1 \cdot\n v_2$ is bounded by a constant depending only on $M$, i.e.
\begin{equation} \label{eq:misto}
\int_M \n v_1 \cdot\n v_2 \,dV_g \leq C.
\end{equation}

Exploiting the fact that $d(p,q)\geq \bar\d$ and using the estimates \eqref{gr1-}, \eqref{gr2-}, we can proceed as in the proof of Proposition \ref{p:test}, see \eqref{grad}, to deduce
\begin{equation} \label{gr}
    \frac 12 \int_{M} |\n \var_{\l,\xi}|^2 \,dV_g \leq 16 \pi\bigr(1 + o_\l(1)\bigr) \log \bigr(\l_{1,s} + \d_{1,s}\bigr) + \frac{16\pi}{a^2} \bigr(1 + o_\l(1)\bigr) \log \bigr(\l_{2,s} + \d_{2,s}\bigr) + C,
\end{equation}
where $\d_{1,s} > \d > 0$ as $s \to 1$ and $\d_{2,s} > \d > 0$ as $s \to 0$, for
some fixed $\d$. 

\medskip

The same argument as in the proof of Proposition \ref{p:test}, see \eqref{aver}, leads to 
$$
  \int_M v_1 \,dV_g = - 4 \bigr(1 + o_\l(1)\bigr) \log \bigr(\l_{1,s} + \d_{1,s}\bigr) + O(1); \qquad \int_M v_2 \,dV_g = - 4 \bigr(1 + o_\l(1)\bigr) \log \bigr(\l_{2,s} + \d_{2,s}\bigr) + O(1),
$$
therefore we obtain
\begin{eqnarray}
    \int_M \var_{\l,\xi} \,dV_g = - 4 \bigr(1 + o_\l(1)\bigr) \log \bigr(\l_{1,s} + \d_{1,s}\bigr) + \frac 4a \bigr(1 + o_\l(1)\bigr) \log \bigr(\l_{2,s} + \d_{2,s}\bigr) + O(1). \label{av1} 
\end{eqnarray}

\medskip

We are left with estimating 
$$
    \int_M e^{\var_{\l,\xi}} \,dV_g =  \int_M \frac{1}{\bigr( 1 + \l_{1,s}^2 d(x,p)^2 \bigr)^2} \bigr( 1 + \l_{2,s}^2 d(x,q)^2 \bigr)^{2/a}\,dV_g(x).
$$
We consider $M = B_{\bar\d/2}(p) \cup (M \setminus B_{\bar\d/2}(p))$. In $B_{\bar\d/2}(p)$ we observe that $\frac 1C \leq d(x, q) \leq C$ and hence
$$
\int_{B_{\bar\d/2}(p)} \frac{1}{\bigr( 1 + \l_{1,s}^2 d(x,p)^2 \bigr)^2} \bigr( 1 + \l_{2,s}^2 d(x,q)^2 \bigr)^{2/a}\,dV_g(x) = \frac{\bigr(\l_{2,s} + \d_{2,s}\bigr)^{\frac 4a}}{\bigr(\l_{1,s} + \d_{1,s}\bigr)^2}\bigr(1 + O(1)\bigr).
$$
In $M \setminus
B_{\bar\d/2}(p)$ we have that $\frac 1C \leq d(x, p) \leq C$ and we deduce that this part is a higher-order term. We conclude that
\begin{equation} \label{exp1-}
\log \int_M {h}_1 e^{\var_{\l,\xi}} \,dV_g = \frac 4a \log \bigr(\l_{2,s} + \d_{2,s}\bigr) - 2 \log \bigr(\l_{1,s} + \d_{1,s}\bigr) + O(1).
\end{equation}
Similarly we get
\begin{equation} \label{exp2-}
\log \int_M {h}_2 e^{-a \var_{\l,\xi}} \,dV_g = 4a\log \bigr(\l_{1,s} + \d_{1,s}\bigr) - 2\log \bigr(\l_{2,s} + \d_{2,s}\bigr) + O(1).
\end{equation}

\medskip

Finally, using the expression of $J_\rho$ in \eqref{func} and the estimates (\ref{gr}), (\ref{av1}), (\ref{exp1-}) and (\ref{exp2-}) we assert that
$$
  J_\rho(\var_{\l,\xi}) \leq \bigr( 16 \pi - 2 \rho_1 + o_\l(1) \bigr)\log \bigr(\l_{1,s} + \d_{1,s}\bigr) + \left(  \frac{16\pi}{a^2} - 2 \rho_2 + o_\l(1) \right)\log \bigr(\l_{2,s} + \d_{2,s}\bigr) + O(1).
$$
Observe that $\dis{\max_{s\in[0,1]}\{ \l_{1,s}, \l_{2,s} \}} \to +\infty$ as $\l \to \infty$. The proof is done since $\rho_1 > 8\pi, \rho_2 > \dfrac{8\pi}{a^2}$ by assumption.

\end{proof}

We prove now the existence result of Theorem \ref{th:ex2}.

\medskip

\begin{proof}[Proof of Theorem \ref{th:ex2}.]
Suppose $\rho_1 \in (8\pi, 16\pi)$ and either $\rho_2 \in \left(\dfrac{8\pi}{a^2}, \dfrac{16\pi}{a^2} \right)$ if $a\geq \dfrac 12$ or $\rho_2 \in \left(\dfrac{8\pi}{a^2}, \dfrac{8\pi}{a^2}+\dfrac{16\pi}{a} \right)$ if $a< \dfrac 12$. Let $X_\l$ be as in \eqref{x} and denote by $\mathcal{R}$ the deformation retraction involved in its definition. Let $\wtilde\Psi$ be as in Proposition \ref{proj} and let $\wtilde \Phi, \var_{\l,\xi}$ be as in \eqref{test-f}. The key fact is to show that 
\begin{equation} \label{comp}
	X_\l \stackrel{\wtilde\Phi}{\longrightarrow} J_\rho^{-L} \xrightarrow{\mathcal{R}\circ\wtilde\Psi} X_\l
\end{equation}
is homotopic to {Id}{$_{|X_\l}$} for $\l$ large. Let $\xi=(p,q,s)\in X_\l$. Recalling the notation in \eqref{not} we need to consider
$$
		\wtilde\Psi(\var_{\l,\xi}) = \biggr( \b(\var_{\l,\xi}), \b(-a\var_{\l,\xi}), s\bigr((\ov\s(\var_{\l,\xi}),\ov\s(-a\var_{\l,\xi})\bigr) \biggr).
$$
Recall the definition of $\dkr$ in \eqref{dist}. Reasoning as in Proposition 4.9 in \cite{bjmr} there exist $C>0$ not depending on $\l, s$ such that for any $\xi=(p,q,s)\in X_\l$ with $d(p,q)\geq \bar \d$ and $s\in(0,1)$ we have
\begin{align} \label{d}
\begin{split}
\frac 1C \min\left\{1,\frac{1}{(1-s)\l}\right\} &\leq \dkr \left (\frac{{h}_1 e^{\var_{\l,\xi}}}{\int_{M} {h}_1 e^{\var_{\l,\xi}} \, dV_g}, M \right ) \leq \frac{C}{(1-s)\l}, \\
\frac 1C \min\left\{1,\frac{1}{s\l}\right\} &\leq \dkr \left (\frac{{h}_2 e^{-a\var_{\l,\xi}}}{\int_{M} {h}_2 e^{-a\var_{\l,\xi}} \, dV_g}, M \right ) \leq \frac{C}{s\l}.
\end{split}
\end{align}
Consider now $d(p,q)\leq \bar\d$. By the construction of the set $X_\l$ we readily have one of the two components $v_1, v_2$ defined in \eqref{v} is bounded, i.e. one bubble is negligible, see the argument at the beginning of the proof of Proposition \ref{p:test2}. Therefore, we can still apply the argument in \cite{bjmr} to deduce the above estimates (in this case we will have either $\min\left\{1,\frac{1}{(1-s)\l}\right\}=\frac{1}{(1-s)\l}$ and $\min\left\{1,\frac{1}{s\l}\right\}=1$ or the switched situation).

\medskip

Concerning the scale of concentration $\ov\s$, by estimating the volume of the test functions in small balls as in Lemma 4.4 in \cite{mal-ru} (see also \cite{jev}) it is not difficult to see that there exists $C>0$ not depending on $\l, s$ such that
\begin{equation} \label{sca}
	\frac 1C \leq \dfrac{ \ov\s(\var_{\l,\xi}) }{ \min\left\{1,\dfrac{1}{(1-s)\l}\right\}} \leq C, \qquad \frac 1C \leq \dfrac{\ov\s(-a\var_{\l,\xi})}{\min\left\{1,\dfrac{1}{s\l}\right\}} \leq C,
\end{equation}
see the proof of Lemma 4.5 in \cite{mal-ru} (see also \cite{jev}). Observe that by \eqref{d} when $\min\left\{1,\frac{1}{(1-s)\l}\right\}=1$ we get $\ov\s(\var_{\l,\xi})=\s_M$, see the notation before \eqref{not}, and the above estimate trivially holds true. A similar argument works for $\ov\s(-a\var_{\l,\xi})$.

It follows that when one of the projections $\displaystyle{\Pi\left(\frac{\ h_1e^{\var_{\l,\xi}}}{\int_M h_1e^{\var_{\l,\xi}}\,dV_g}\right), \Pi\left(\frac{\ h_2e^{-a\var_{\l,\xi}}}{\int_M h_2e^{-a\var_{\l,\xi}}\,dV_g}\right)}\in M$ is not defined, where $\Pi$ is given before \eqref{s}, the other necessarily is, and the map is well defined by the equivalence relation, see \eqref{join}, for $\l>0$ sufficiently large. Moreover, $\displaystyle{\Pi\left(\frac{\ h_1e^{\var_{\l,\xi}}}{\int_M h_1e^{\var_{\l,\xi}}\,dV_g}\right)\to p}$ and $\displaystyle{\Pi\left(\frac{\ h_2e^{-a\var_{\l,\xi}}}{\int_M h_2e^{-a\var_{\l,\xi}}\,dV_g}\right)}\to q$ as $\l\to+\infty$ whenever they are well defined, see for example the proof of Theorem \ref{th:ex1}. On the other hand, we clearly have $\displaystyle{\Pi\left(\frac{\ h_1e^{\var_{\l,\xi}}}{\int_M h_1e^{\var_{\l,\xi}}\,dV_g}\right)\approx \b(\var_{\l,\xi})}$ and $\displaystyle{\Pi\left(\frac{\ h_2e^{-a\var_{\l,\xi}}}{\int_M h_2e^{-a\var_{\l,\xi}}\,dV_g}\right)}\approx \b(-a\var_{\l,\xi})$ for $\l$ sufficiently large since all the volume of the test functions is accumulating around $p$ or $q$, respectively, see the argument of Lemma 4.5 in \cite{mal-ru} for an alternative proof of the latter property (see also \cite{jev}). We conclude that $\b(\var_{\l,\xi})\approx p$ and $\b(-a\var_{\l,\xi})\approx q$ for $\l$ large, whenever they are well defined.

\medskip

Therefore, the desired homotopy is obtained by deforming $\b(\var_{\l,\xi})$ to $p$, $\b(-a\var_{\l,\xi})$ to $q$, whenever they are well defined and by deforming $s\bigr((\ov\s(\var_{\l,\xi}),\ov\s(-a\var_{\l,\xi})\bigr)$ to the initial $s$. We have just to check that $\wtilde\Psi(\var_{\l,\xi})\in (M*M)\setminus S$, i.e. that $s\bigr((\ov\s(\var_{\l,\xi}),\ov\s(-a\var_{\l,\xi})\bigr)\neq \frac 12$ for $d(p,q)\leq \bar\d$, see \eqref{S}. Indeed, we have already observed below \eqref{d} that in this case we get either $\min\left\{1,\frac{1}{(1-s)\l}\right\}=\frac{1}{(1-s)\l}$ and $\min\left\{1,\frac{1}{s\l}\right\}=1$ or the switched situation. Suppose the first alternative holds true. Using then \eqref{sca} we conclude that $\ov\s(\var_{\l,\xi})\ll \ov\s(-a\var_{\l,\xi})$ for $\l$ large and hence $s\bigr((\ov\s(\var_{\l,\xi}),\ov\s(-a\var_{\l,\xi})\bigr)\neq \frac 12$ by definition. 

\medskip

This concludes the proof of the fact that the composition in \eqref{comp} is homotopic to {Id}{$_{|X_\l}$} for $\l$ large. Therefore, we deduce that
$$
	H_q(X_\l) \hookrightarrow H_q \left(J_{\rho}^{-L}\right).
$$
We next observe that $X_\l$ has non-trivial homology which leads to non-trivial homology of $J_{\rho}^{-L}$. Since $X_\l$ is a deformation retract of $(M*M)\setminus S$ it is enough to consider the homology of the latter set. We point out that the positive genus case $g(M)>0$ can be treated as in Section \ref{sec:genus}, which yields existence of solutions to \eqref{eq}. Hence, we restrict our attention to $M\cong\S^2$. In this case we get $\S^2*\S^2\cong \S^5$ and $S\cong \S^2$. Therefore, by the Alexander duality, see the Corollary 3.45 in \cite{hat}, we obtain $H_2((\S^2*\S^2)\setminus \S^2)\cong H^2(\S^2)\cong\mathbb Z$.

 Therefore, by applying Proposition \ref{top-arg} and Proposition \ref{high} as in the proof of Theorem~\ref{th:ex1} we get the conclusion.
\end{proof}

\

\section{Existence result in a supercritical case with positive genus} \label{sec:genus}

In this section we are concerned with the supercritical range $\rho_1 \in (8\pi, 16\pi)$ and $\rho_2 \in \left(\dfrac{8\pi}{a^2}+\dfrac{16\pi}{a}, \dfrac{16\pi}{a^2} \right)$ for $a< \dfrac 12$. The goal is to get the existence result of Theorem \ref{th:ex3}. Observe that the improved inequality of Proposition \ref{mt-improved} does not apply to this case and hence the strategy of the previous section does not work out for this range of the parameters. To overcome the difficulties we will restrict ourselves to a surface $M$ with positive genus $g(M)>0$ and apply the argument introduced in \cite{bjmr} and used also in \cite{je-ya}, see the discussion in the Introduction below \eqref{join}.

In the previous sections we have already introduced almost all the ingredients to carry the argument out. We start by recalling that if $J_\rho(u)$ is large negative, then either $e^u$ or $e^{-au}$ (or both) need to be concentrated around a point of the surface and hence there is a continuous map 
\begin{equation} \label{pr}
	\ov \Psi: J_\rho^{-L} \to M *M,
\end{equation}
for $L$ sufficiently large, see Proposition \ref{p:altern}. Next, we need the following topological result concerning positive genus surfaces, see \cite{bjmr}.
\begin{lem} \label{new} Let $M$ be a compact surface not homeomorphic to $\S^2$. Then, there exist two
simple closed curves $\gamma_1, \gamma_2 \subseteq M$
such that

\begin{enumerate}
\item $\gamma_1, \gamma_2$ do not intersect
each other ;

\item there exist two global retractions $R_i: M \to \gamma_i$,
$i=1,2$.

\end{enumerate}

\end{lem}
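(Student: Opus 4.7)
The plan is to split the argument according to the genus $g=g(M)\geq 1$. In the case $g=1$, $M$ is homeomorphic to the torus $T^2=\S^1\times\S^1$, so I would simply take two distinct points $p_1,p_2\in\S^1$ and set $\gamma_i:=\S^1\times\{p_i\}$, which are manifestly disjoint simple closed (non-contractible) curves. The explicit formula $R_i(x,y)=(x,p_i)$ defines a continuous retraction of $M$ onto $\gamma_i$, settling this case.

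For $g\geq 2$, I would realize $M$ as a connected sum of $g$ tori $T_1\#\cdots\#T_g$ and pick, for $i=1,2$, a non-separating simple closed curve $\gamma_i\subset T_i$ disjoint from the small surgery disks used to perform the connected sum. This produces two disjoint non-separating (in particular non-contractible) curves on $M$. To construct the retractions $R_i:M\to\gamma_i$ I would then use a $\pi_1$-theoretic argument. Since $\gamma_i$ is non-separating, its homology class is primitive in $H_1(M;\Z)\cong\Z^{2g}$, so there exists a homomorphism $\varphi_i:\pi_1(M)\to\Z$ with $\varphi_i\circ (j_i)_\ast=\mathrm{id}_{\pi_1(\gamma_i)}$, where $j_i:\gamma_i\hookrightarrow M$ denotes inclusion. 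Since $M$ is aspherical (its universal cover is $\R^2$ for $g=1$ and $\H^2$ for $g\geq 2$, both contractible) and $\gamma_i\cong\S^1=K(\Z,1)$, the homomorphism $\varphi_i$ is realized by a continuous map $\widetilde R_i:M\to\gamma_i$ unique up to homotopy. Its restriction to $\gamma_i$ is a degree-one self-map of $\S^1$, hence homotopic to $\mathrm{id}_{\gamma_i}$; invoking the homotopy extension property of the CW pair $(M,\gamma_i)$ one then modifies $\widetilde R_i$ to a genuine retraction $R_i:M\to\gamma_i$.

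The main obstacle I expect is the construction of the retractions in the higher genus case: finding disjoint non-separating curves is elementary surface topology, but promoting the algebraic splitting of $\pi_1$ to an actual continuous retraction requires the homotopy-theoretic package above (primitivity of $[\gamma_i]$ in homology, asphericity of $M$ and of $\S^1$, and HEP for the CW pair $(M,\gamma_i)$). The same argument in fact covers the $g=1$ case as well, if one prefers a uniform treatment, so the torus discussion above may be regarded as an illustrative special case.
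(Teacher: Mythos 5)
Your proof is correct. The paper itself offers no proof of this lemma, citing \cite{bjmr} instead, so there is no in-paper argument to compare against. Your route --- primitivity of $[\gamma_i]$ in $H_1(M;\Z)$ (which follows from the intersection pairing with a dual curve) gives a splitting homomorphism $\varphi_i:\pi_1(M)\to\Z$; asphericity of $M$ and of $\gamma_i\cong\S^1$ realizes it as a map $M\to\gamma_i$; its restriction to $\gamma_i$ has degree one, hence is homotopic to the identity; and the homotopy extension property of the pair $(M,\gamma_i)$ upgrades this to a genuine retraction --- is a complete obstruction-theoretic argument. A more elementary alternative construction, closer in spirit to what one finds in the reference, avoids $K(\pi,1)$ machinery: a regular neighborhood $N$ of $\gamma_i\cup\delta_i$, with $\delta_i$ a dual curve, is a once-punctured torus that deformation retracts onto $\gamma_i\vee\delta_i$ and then onto $\gamma_i$ by collapsing $\delta_i$; since $[\partial N]$ is the commutator $[\gamma_i,\delta_i]$, it dies under this collapse, so one may further homotope rel $\gamma_i$ to send $\partial N$ to a single point of $\gamma_i$ and then extend by the constant map over $M\setminus N$. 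Both routes deliver the retraction. One caveat concerning the statement rather than your proof: since $H^1(\R P^2;\Z)=0$, $\R P^2$ admits no retraction onto any circle, so the hypothesis should be read as ``compact \emph{orientable} surface not homeomorphic to $\S^2$'' (as is assumed throughout the paper); your argument, via the genus classification and connected sums of tori, addresses exactly this orientable case.
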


Let $\gamma_i$ be as in the latter lemma. By means of the above retractions we can restrict the map $\ov \Psi$ in \eqref{pr} to targets in the topological join $\gamma_1*\gamma_2$ only. Indeed, recalling the notation in \eqref{map} we consider 
\begin{equation} \label{pr2}
	\ov\Psi_R (u) = \left( (R_1)_*\,\Pi\left(\frac{\
h_1e^{u}}{\int_M
h_1e^{u}\,dV_g}\right), (R_2)_*\,\Pi\left(\frac{\
h_2e^{-au}}{\int_M
h_2e^{-au}\,dV_g}\right), s \right),
\end{equation}
where $(R_i)_*$ stands for the push-forward of the map $R_i$: we have
$$
\ov\Psi_R:J_\rho^{-L} \to \gamma_1*\gamma_2.
$$
Moreover, as in the previous section we can construct a reverse map. Namely, let $\xi=(p,q,s)\in \gamma_1*\gamma_2$, i.e. $p\in \gamma_1$, $q\in \gamma_2$ and consider test functions 
\begin{equation} \label{test2}
\wtilde\Phi_R(\xi)=\var_{\l,\xi}
\end{equation} 
as given in \eqref{test-f}. Then, the following result holds true. 
\begin{pro}
Suppose $\rho_1 > 8\pi$ and $\rho_2 > \dfrac{8\pi}{a^2}$ with $a\in(0,1)$. Let $\var_{\l,\xi}$ be as given in \eqref{test-f}. Then, it holds
$$
	J_\rho \left( \var_{\l,\xi} \right) \to -\infty \quad \mbox{as } \l\to+\infty \qquad \mbox{uniformly in } \xi\in \gamma_1*\gamma_2.
$$ 
Letting $\wtilde\Phi_R$ be defined as in \eqref{test2}, it follows that for any $L>0$ there exists $\l>0$ large such that
$$
	\wtilde\Phi_R: \gamma_1*\gamma_2 \to J_\rho^{-L}.
$$
\end{pro}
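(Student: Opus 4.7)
The plan is to reduce the estimate to the one already carried out in Proposition~\ref{p:test2}. The crucial simplification comes from Lemma~\ref{new}: $\gamma_1$ and $\gamma_2$ are disjoint compact curves in $M$, so there exists $\bar\d>0$ such that $d(p,q)\ge\bar\d$ for every $p\in\gamma_1$ and every $q\in\gamma_2$. In particular, the delicate short-distance regime $d(p,q)\le\bar\d$ encountered in the proof of Proposition~\ref{p:test2} does not occur here, and only the long-distance analysis is needed.

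I would decompose $\var_{\l,\xi}=v_1-\frac{1}{a}v_2$ with $v_1, v_2$ as in \eqref{v} and transplant the three estimates established there. The pointwise gradient bounds \eqref{gr1-}--\eqref{gr2-}, combined with the fact that the cross term $\int_M\n v_1\cdot\n v_2\,dV_g$ is bounded by a constant depending only on $M$ and $\bar\d$ (thanks to the positive separation between $p$ and $q$, as in \eqref{eq:misto}), yield
$$\frac 12 \int_{M} |\n \var_{\l,\xi}|^2 \,dV_g \leq 16\pi\bigr(1 + o_\l(1)\bigr) \log \bigr(\l_{1,s} + \d_{1,s}\bigr) + \frac{16\pi}{a^2} \bigr(1 + o_\l(1)\bigr) \log \bigr(\l_{2,s} + \d_{2,s}\bigr) + C,$$
with $\d_{1,s},\d_{2,s}$ bounded below by a fixed positive constant as the corresponding $\l_{i,s}$ tends to zero. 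The mean estimate \eqref{av1} and the exponential estimates \eqref{exp1-}, \eqref{exp2-} carry over verbatim, since they too depend only on the separation $d(p,q)\ge\bar\d$.

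Plugging these into the expression \eqref{func} for $J_\rho$ gives
$$J_\rho(\var_{\l,\xi}) \leq \bigr(16\pi-2\rho_1+o_\l(1)\bigr)\log\bigr(\l_{1,s}+\d_{1,s}\bigr) + \left(\frac{16\pi}{a^2}-2\rho_2+o_\l(1)\right)\log\bigr(\l_{2,s}+\d_{2,s}\bigr) + O(1),$$
uniformly in $\xi=(p,q,s)\in\gamma_1*\gamma_2$. Since $\rho_1>8\pi$ and $\rho_2>\frac{8\pi}{a^2}$ both coefficients in front of the logarithms are strictly negative for $\l$ large, and $\max_{s\in[0,1]}\{\l_{1,s},\l_{2,s}\}\to+\infty$ as $\l\to+\infty$; hence $J_\rho(\var_{\l,\xi})\to -\infty$ uniformly in $\xi$.

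The only point that requires a separate remark is the behaviour at the endpoints $s=0$ and $s=1$, where one of the bubbles collapses (one of the $\l_{i,s}$ vanishes) and the corresponding $v_i$ becomes uniformly bounded with bounded derivatives. There $\var_{\l,\xi}$ reduces to a one-bubble ansatz of the type considered in Proposition~\ref{p:test}, so the estimate still holds and is consistent with the equivalence relation defining the topological join \eqref{join}. This also ensures that $\wtilde\Phi_R$ is a well-defined, continuous map on $\gamma_1*\gamma_2$. The second assertion of the proposition is then immediate: given $L>0$, we pick $\l$ so large that $J_\rho(\var_{\l,\xi})\le-L$ for every $\xi\in\gamma_1*\gamma_2$, obtaining $\wtilde\Phi_R:\gamma_1*\gamma_2\to J_\rho^{-L}$. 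The main obstacle to be careful about is the uniformity of the estimates in $s\in[0,1]$, which is however handled automatically because $\d_{1,s},\d_{2,s}$ stay uniformly bounded away from zero near the degenerate endpoints.
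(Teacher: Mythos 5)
Your proof is correct and follows the same route as the paper: the key observation in both is that the disjointness of the compact curves $\gamma_1,\gamma_2$ forces $d(p,q)\ge\bar\d>0$ uniformly, so the short-distance case in the proof of Proposition~\ref{p:test2} never arises and the estimates \eqref{gr}, \eqref{av1}, \eqref{exp1-}, \eqref{exp2-} carry over directly. The paper's proof is a two-line reduction to Proposition~\ref{p:test2}; your write-up simply spells out the transplanted estimates and the endpoint behaviour in more detail, which is fine.
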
 

\begin{proof}
We have just to observe that the components $v_1, v_2$ of the test functions $\var_{\l,\xi}$, as defined in \eqref{v}, are supported in $p\in\gamma_1$ and $q\in\gamma_2$, respectively, where $\gamma_1, \gamma_2$ do not intersect each other by construction. Therefore, there exists $\bar\d>0$ small such that $d(p,q)\geq \bar\delta$ and we can carry out all the estimates as in the proof of the Proposition \ref{p:test2}.   
\end{proof}
 
We prove now the existence result of Theorem \ref{th:ex3}.

\medskip

\begin{proof}[Proof of Theorem \ref{th:ex3}.]
Suppose $\rho_1 \in (8\pi, 16\pi)$ and $\rho_2 \in \left(\dfrac{8\pi}{a^2}+\dfrac{16\pi}{a}, \dfrac{16\pi}{a^2} \right)$ with $a< \dfrac 12$. Let $\gamma_1$, $\gamma_2$ be as in Lemma \ref{new}, let $\ov\Psi_R$ be as in \eqref{pr2} and let $\wtilde\Phi_R$ be as in \eqref{test2}. Reasoning as in the proof of Theorem \ref{th:ex2} we get
$$
	\gamma_1*\gamma_2 \stackrel{\wtilde\Phi_R}{\longrightarrow} J_\rho^{-L} \stackrel{\wtilde\Psi_R}{\longrightarrow} \gamma_1*\gamma_2
$$
is homotopic to Id{$_{|\gamma_1*\gamma_2}$} for $\l$ large. It follows that
$$
	H_q(\gamma_1*\gamma_2) \hookrightarrow H_q \left(J_{\rho}^{-L}\right).
$$
We know that $\gamma_1*\gamma_2$ is homeomorphic to $\mathbb{S}^3$, see for example \cite{bjmr} and hence $J_{\rho}^{-L}$ has non-trivial homology. One can then conclude by applying Proposition \ref{top-arg} and Proposition \ref{high} as in the proof of Theorem~\ref{th:ex1}.
\end{proof}

\

\begin{center}
\textbf{Acknowledgements}
\end{center}

The author would like to thank W. Yang for the discussions concerning the topic of this paper and Prof. A. Malchiodi, Prof. S. Kallel for their helpful comments. \\
The author is supported by PRIN12 project: \emph{Variational and Perturbative Aspects of Nonlinear Differential Problems} and FIRB project:
\emph{Analysis and Beyond}.

\

\end{document}